\pgfplotsset{compat=1.6}
\theoremstyle{plain}
\newtheorem{definition}{Definition}
\numberwithin{definition}{section}
\newtheorem{assumption}{Assumption}
\numberwithin{assumption}{section}
\newtheorem{theorem}{Theorem}
\numberwithin{theorem}{section}
\newtheorem{proposition}{Proposition}
\numberwithin{proposition}{section}
\theoremstyle{plain}
\newtheorem{remark}
{Remark}
\theoremstyle{plain}
\newtheorem{lemma}{Lemma}[section]
\newcommand\T{\rule{0pt}{2.6ex}}
\newcommand\B{\rule[-1.2ex]{0pt}{0pt}}
\newcommand{\jump}[1]{\left[\mkern-1.5mu \left[#1\right] \mkern-1.5mu\right]}
\newcommand{\avg}[1]{\left\{ \mkern-5mu \left\{#1 \right\} \mkern-5mu \right\}}
\newcommand{\wavg}[1]{\left\{ \mkern-5mu \left\{#1 \right\} \mkern-5mu \right\}_{\omega}}
\newcommand{\wTavg}[1]{\left\{ \mkern-5mu \left\{#1 \right\} \mkern-5mu \right\}_{\omega_{\boldsymbol{\Theta}}}}
\newcommand{\wPavg}[1]{\left\{ \mkern-5mu \left\{#1 \right\} \mkern-5mu \right\}_{\omega_{\mathbf{K}}}}
\newcommand{\wUavg}[1]{\left\{ \mkern-5mu \left\{#1 \right\} \mkern-5mu \right\}_{\omega_{\mu}}}
\renewcommand{\div}[1]{\nabla \mkern-2.5mu \cdot \mkern-2.5mu {#1}}
\newcommand{\divh}[1]{\nabla_h \mkern-2.5mu \cdot \mkern-2.5mu {#1}}
\newcommand{\specialcell}[2][c]{%
  \begin{tabular}[#1]{@{}c@{}}#2\end{tabular}}
\newcommand\nnfootnote[1]{%
  \begin{NoHyper}
  \renewcommand\thefootnote{}\footnote{#1}%
  \addtocounter{footnote}{-1}%
  \end{NoHyper}
}
\definecolor{myred}{rgb}{0.9, 0.0, 0.0}
\definecolor{myblue}{rgb}{0.0, 0.28, 0.67}
\definecolor{mygreen}{rgb}{0.0, 0.7, 0.0}
\definecolor{myyellow}{rgb}{1.0, 0.55, 0.0}
\begin{document}

\title{\textbf{Robust discontinuous Galerkin-based scheme for the fully-coupled non-linear thermo-hydro-mechanical problem}}
\author[a]{Stefano Bonetti\textsuperscript{*,}}
\author[a]{Michele Botti}
\author[a]{Paola F. Antonietti}
\affil[a]{\small{\textit{MOX, Dipartimento di Matematica, Politecnico di Milano, P.zza Leondardo da Vinci 32, 20133 Milano, Italy.}}}
\date{}

\maketitle

\begin{center}
\begin{minipage}[c]{1\textwidth}
\textbf{Abstract}
\smallskip
\newline
We present and analyze a discontinuous Galerkin method for the numerical modeling of the non-linear fully-coupled thermo-hydro-mechanic problem. We propose a high-order symmetric weighted interior penalty scheme that supports general polytopal grids and is robust with respect to strong heteorgeneities in the model coefficients. We focus on the treatment of the non-linear convective transport term in the energy conservation equation and we propose suitable stabilization techniques that make the scheme robust for advection-dominated regimes. The stability analysis of the problem and the convergence of the fixed-point linearization strategy are addressed theoretically under mild requirements on the problem's data. A complete set of numerical simulations is presented in order to assess the convergence and robustness properties of the proposed method.
\bigskip
\newline
\textbf{Keywords:} discontinuous Galerkin method, thermo-hydro-mechanics, thermo-poroelasticity, non-linear problems, robustness, polygonal and polyhedral meshes.
\bigskip
\newline
\textbf{MSC:} 65M12, 65M60, 74F05, 76S05
\end{minipage}
\end{center}

\nnfootnote{* Corresponding author}
\nnfootnote{\textit{E-mail addresses:} \texttt{stefano.bonetti@polimi.it} (Stefano Bonetti), \texttt{michele.botti@polimi.it} (Michele Botti), \texttt{paola.antonietti@polimi.it} (Paola F. Antonietti)}

\section{Introduction}
\label{sec:RobQSTPE_introduction}
The thermo-hydro-mechanical (THM) coupling refers to the coupled interactions between temperature, fluid flow, and mechanical deformations. This coupling occurs in many natural and engineered systems. THM is relevant in several processes, such as environmental science, civil engineering, and material science. Applications of THM have an impact on many fields: environmental sustainability (e.g. $CO_2$ sequestration or geothermal energy production), public safety, economy. Moreover, it finds application also in seismicity/induced seismicity studies.

The THM problem is often formulated starting from Biot's equation of poroelasticity. Poroelasticity inspects the interaction between fluid flow and elastic deformation within a porous medium, indeed it is a suitable modelization of the subsoil. In fact, in the context of geophysical applications, we model it as a fully-saturated poroelastic material, and -- if the focus is not the study of seismic effects -- it is often assumed that we are in the small deformations regime. Thus, through this article we assume to be in the quasi-static regime. For the aforementioned applications, the temperature plays a key role in the description of the phenomena and its evolution, then it is included in the model via an energy conservation equation, leading to a fully-coupled THM system of equations \cite{Brun2019, Brun2020, AntoniettiBonetti2022}:

\begin{equation}
	\label{eq:RobQSTPE_ModelProblem}
	\left\{
	\begin{aligned}
		& \partial_t \left(a_0 T - b_0 p + \beta \div{\mathbf{u}} \right) - c_f \nabla T \cdot (\mathbf{K} \nabla p) - \div{(\boldsymbol{\Theta} \nabla T )} = H && \text{in} \ \Omega \times (0, T_f], \qquad (i)\\
		& \partial_t \left(c_0 p - b_0 T + \alpha \div{\mathbf{u}} \right) - \div{(\mathbf{K} \nabla p)} = g && \text{in} \ \Omega \times (0, T_f], \qquad (ii)\\
		& -\div{\boldsymbol{\sigma}(\mathbf{u},p,T)} = \mathbf{f} && \text{in} \ \Omega \times (0, T_f], \qquad (iii)\\
	\end{aligned}
	\right.
\end{equation}
where $\Omega$ is the computational domain, $T_f >0$ is the final simulation time, and $\boldsymbol{\sigma}(\mathbf{u}, p, T)$ is the total Cauchy stress tensor accounting for the effects of pressure and temperature. Our model is constituted by three equations: $(i)$ the energy conservation equation, that is solved in the temperature unknown $T$, $(ii)$ the mass conservation equation in the pore pressure unknown $p$, and $(iii)$ the momentum conservation equation in the solid displacement unknown $\mathbf{u}$. In $(ii)$, $(iii)$ we can recognize the structure of the Biot's system with the additional contribution of the temperature, which plays a role similar to the one of the pressure. Equation $(i)$ is similar to $(ii)$ but with the presence of both conduction and convection terms at the same time. The convection term is the one that makes the two equations different, moreover, we highlight that it is a nonlinear term. The correct handling of this non-linearity is one of the main points of this work: we propose a discretization technique that can handle the case of vanishing thermal conductivity. To this aim, we refer to the strategies used in the discontinuous Galerkin treatment of advection-diffusion problems in the advection-dominated regimes. Moreover, reasoning as in the hyperbolic framework, we add boundary terms that are in charge of enforcing boundary conditions in the \textit{inflow} part of the boundary of the domain. To ensure the robustness of the scheme for the quasi-incompressible limit -- i.e. with respect to volumetric locking phenomena -- we consider one additional scalar equation that is solved in the so-called total pressure auxiliary variable. The introduction of the total pressure also ensures the inf-sup stability of the problem. 

For the spatial discretization of the problem, we propose a discontinuous Galerkin (dG) finite element method on polytopal grids (PolyDG \cite{Cangiani2014}). Examples of PolyDG schemes can be found in  \cite{Antonietti2013, Bassi2012, Antonietti2019} for elliptic problems,  in \cite{Cangiani2017} for parabolic problems, and in   \cite{Antonietti.Botti.ea:21, Botti.Di-Pietro.ea:19, Botti2021} for poroelasticity. Moreover, in \cite{AntoniettiBonetti2022} a PolyDG method for thermo-hydro-mechanics model is analyzed. The PolyDG schemes are appealing in this context because they are geometrically flexible, i.e. they allow for polygonal/polyhedral elements (possibly agglomerated), local mesh refinement and coarsening, and allow to handle highly heterogeneous media by facilitating the representation of inner discontinuities. To further enhance robustness with respect to heterogeneities we consider a Symmetric Weighted Interior Penalty (WSIP) version of the PolyDG scheme, in which weighted averages \cite{Heinrich2002, Heinrich2003, Heinrich2005, Stenberg1998} are introduced in place of the standard averages operators of the discontinuous Galerkin methods \cite{Ern2009}. Finally, they are also suitable for high-order approximations and through the article, we show that this may be an important property for dealing with the robust treatment of the non-linearity. 

The major highlights of this paper are: \textit{(a)} a detailed description of the PolyDG-WSIP method for the THM problem; \textit{(b)} an in-depth discussion of the advection term, with suitable stabilization techniques to deal with the advection-dominated regime; and \textit{(c)} numerical verification of the robustness of the proposed method both in terms of stability for degenerating coefficients and in terms of capability to handle heterogeneities in the physical parameters. The numerical analysis presented in this work, compared to \cite{AntoniettiBonetti2022}, focuses on establishing stability estimates which are robust with respect to hydraulic and thermal conductivities. Additionally, a novel linearization strategy is proposed and analyzed by deriving a condition on the model data which ensures convergence. 

The rest of the paper is organized as follows: the model problem, the assumptions on the model's coefficients, and the four-fields formulation is presented in Section~\ref{sec:RobQSTPE_model_problem}. In Section~\ref{sec:RobQSTPE_discretization}, we design the PolyDG-WSIP space discretization, we detail the treatment of the non-linear advection term (cf. Section~\ref{sec:RobQSTPE_advection_dg}), introduce the stabilization techniques (cf. Section~\ref{sec:RobQSTPE_advection_stabilization}), and present the linearization algorithm. In Section~\ref{sec:RobQSTPE_stability}, we study the stability of the semi-discrete problem and study the convergence of the fixed-point iteration scheme. In Section~\ref{sec:RobQSTPE_conv_test} we assess the performance of the method in terms of accuracy and in Section~\ref{sec:RobQSTPE_rob_test} in terms of robustness with respect to the model coefficients.

\section{Model Problem}
\label{sec:RobQSTPE_model_problem}
Let $\Omega \subset \mathbb{R}^d$, $d \in \{2,3\}$, be an open bounded Lipschitz polygonal/polyhedral domain. We consider the coupled thermo-hydro-mechanical problem reading: \textit{find $(\mathbf{u}, p, T)$ such that it holds} 
\begin{subequations}
	\label{eq:RobQSTPE_QS_TPE_system}
	\begin{empheq}[left=\empheqlbrace]{align}
		& a_0 T - b_0 p + \beta \div{\mathbf{u}} - c_f \nabla T \cdot (\mathbf{K} \nabla p) - \div{(\boldsymbol{\Theta} \nabla T )} = H && \text{in} \ \Omega, \label{eq:RobQSTPE_energy_cons} \\
		& c_0 p - b_0 T + \alpha \div{\mathbf{u}} - \div{(\mathbf{K} \nabla p)} = g && \text{in} \ \Omega, \label{eq:RobQSTPE_mass_cons} \\
		& -\div{\boldsymbol{\sigma}(\mathbf{u},p,T)} = \mathbf{f} && \text{in} \ \Omega, \label{eq:RobQSTPE_momentum_cons}
	\end{empheq}
\end{subequations}
Note that $T$ represents the variation of the temperature with respect to a reference value \cite{Coussy2003}. The source terms $H$, $g$, $\mathbf{f}$ are a heat source, a fluid mass source, and a body force, respectively and are given. The constitutive law for the stress tensor $\boldsymbol{\sigma}$ (in \eqref{eq:RobQSTPE_momentum_cons}) in the linear elasticity framework is given by
\begin{equation}
	\label{eq:RobQSTPE_const_law_sigma}
	\boldsymbol{\sigma}(\mathbf{u},p,T) = 2 \mu \boldsymbol{\epsilon}(\mathbf{u}) + \lambda \div{\mathbf{u}} \mathbf{I} - \alpha p \mathbf{I} - \beta T \mathbf{I}, 
\end{equation}
where $\mathbf{I}$ is the identity tensor and $\boldsymbol{\epsilon}(\mathbf{u})= \frac{1}{2}(\nabla \mathbf{u} + \nabla \mathbf{u}^T)$ is the strain tensor. For the sake of simplicity, we supplement \eqref{eq:RobQSTPE_QS_TPE_system} 
with homogeneous Dirichlet conditions, namely $\mathbf{u} = \boldsymbol{0}$, $p = 0$, and $T=0$ on $\partial\Omega$. 

We remark that Problem~\ref{eq:RobQSTPE_QS_TPE_system} can be seen as one step of an implicit time advancing scheme (e.g. backward Euler method) applied to \eqref{eq:RobQSTPE_ModelProblem}. In this case, the conductivity tensors $\mathbf{K}$ in \eqref{eq:RobQSTPE_energy_cons} and $\boldsymbol{\Theta}$ in \eqref{eq:RobQSTPE_mass_cons} are scaled by the time-step $\delta t$, namely $\mathbf{K}=\delta t \tilde{\mathbf{K}}$ and $\boldsymbol{\Theta}=\delta t \tilde{\boldsymbol{\Theta}}$, where $\tilde{\mathbf{K}}$ and $\tilde{\boldsymbol{\Theta}}$ are the actual hydraulic mobility and heat conductivity of the medium, respectively.
For a detailed derivation of the quasi-static model we refer the reader to \cite{Brun2018}.
In Table~\ref{tab:RobQSTPE_TPE_params} we detail the parameters characterizing problem \eqref{eq:RobQSTPE_QS_TPE_system}-\eqref{eq:RobQSTPE_const_law_sigma} specifying their physical interpretation and their corresponding unit of measure. For a detailed discussion on the parameters and the relations among them we refer to \cite{AntoniettiBonetti2022}.
\begin{table}[ht]
	\centering 
	\footnotesize
	\begin{tabular}{ c | c | l }
		\textbf{Notation} & \textbf{Quantity} & \textbf{Unit} \\[3pt]
		$a_0$ & thermal capacity & \si[per-mode = symbol]{\pascal \per \kelvin \squared} \\
		$b_0$ & thermal dilatation coefficient & \si{\per \kelvin} \\
		$c_0$ & specific storage coefficient & \si{\per\pascal} \\
		$\alpha$ & Biot--Willis constant & - \\
		$\beta$ & thermal stress coefficient & \si[per-mode = symbol]{\pascal \per \kelvin} \\
		$c_f$  & fluid volumetric heat capacity divided by reference temperature & \si[per-mode = symbol]{\pascal \per \kelvin\squared} \\ 
		$\mu, \lambda$ & Lamé parameters & \si{\pascal} \\
		$\tilde{\mathbf{K}}$ & permeability divided by fluid viscosity & \si[per-mode = symbol]
		{\metre\squared \per \pascal \per \second} \\
		$\tilde{\boldsymbol{\Theta}}$ & effective thermal conductivity & \si[per-mode = symbol]{\metre\squared \pascal \per \kelvin\squared \per \second} \\
		$\phi$ & porosity & - \\
	\end{tabular}
	\caption{Thermo-hydro-mechanics coefficients appearing in \eqref{eq:RobQSTPE_QS_TPE_system}, \eqref{eq:RobQSTPE_const_law_sigma}.}
	\label{tab:RobQSTPE_TPE_params}
\end{table}

\subsection{Notation and assumptions}
For $X\subseteq\Omega$, we denote by $L^p(X)$ the standard Lebesgue space of index $p\in [1, \infty]$ and by $H^q(X)$ the Sobolev space of index $q \geq 0$ of real-valued functions defined on $X$, with the convention that $H^0(X)=L^2(X)$. 
The notation $\mathbf{L}^2(X)$ and $\mathbf{H}^q(X)$ is adopted in place of $\left[ L^2(X) \right]^d$ and $\left[ H^q(X) \right]^d$, respectively. 
In addition, we denote by $\mathbf{H}(\textrm{div},X)$ the space of $\mathbf{L}^2(X)$ vector fields whose divergence is square integrable. These spaces are equipped with natural inner products and norms denoted by $(\cdot, \cdot)_X = (\cdot, \cdot)_{L^2(X)}$ and $||\cdot||_X = ||\cdot||_{L^2(X)}$, with the convention that the subscript can be omitted in the case $X=\Omega$.
For the sake of brevity, in what follows, we make use of the symbol $x \lesssim y$ to denote $x \le C y$, where $C$ is a positive constant independent of the discretization parameters.

Following \cite{Brun2019}, we introduce suitable assumptions on the problem data, both on the forcing and boundary data and on the problem parameters:
\begin{assumption}[Assumptions on the problem data]
	\hspace{0pt}
	We assume that:
	\label{assumption:RobQSTPE_model_problem}
	\begin{enumerate}
		\item the hydraulic mobility $\textup{\textbf{K}}= (K)^d_{i,j=1}$ and heat conductivity $\boldsymbol{\Theta}=(\Theta)^d_{i,j=1}$ are symmetric tensor fields which, for strictly positive real numbers $k_M>k_m$ and $\theta_M> \theta_m$, satisfy
		\begin{equation}
			k_m |\zeta|^2 \leq \zeta^T \textup{\textbf{K}}(x)\zeta \leq k_M |\zeta|^2
			\quad \text{and} \quad
			\theta_m |\zeta|^2 \leq \zeta^T \boldsymbol{\Theta}(x) \zeta \leq \theta_M |\zeta|^2, \quad\forall\zeta \in \mathbb{R}^d, \ \text{a.e.} \ x \in \Omega;
		\end{equation}
		\item The shear modulus $\mu$ and the fluid heat capacity $c_f$ are scalar fields such that $\mu:\Omega\to[\mu_m,\mu_M]$ and $c_f:\Omega\to[0, c_{fM}]$ with $0<\mu_m\le\mu_M$ and $0\le c_{fM}$;
		\item 
		the coupling parameters $\alpha:\Omega\to (\phi,1]$ and $\beta:\Omega\to (0,\beta_M]$ are strictly positive;
		\item the scalar fields $\lambda$, $c_0$, $b_0$, and $a_0$ are such that $\lambda\ge 0$ and $a_0, c_0 \geq b_0\geq 0$;
		\item the forcing terms are chosen such that $g,H \in L^2(\Omega)$ and $\mathbf{f} \in \mathbf{L}^2(\Omega)$.
	\end{enumerate}
\end{assumption}

\subsection{Four-field formulation}
\label{sec:RobQSTPE_RobQSTPE_4field_form}

As in \cite{AntoniettiBonetti2022}, we refer to the four-field formulation of the THM problem obtained by introducing the total pressure auxiliary variable $\varphi$ (\cite{Oyarzua2016, Botti2021}), in which we include all the volumetric contributions to the stress tensor, namely:
$$
\varphi = \lambda\div{\mathbf{u}}-\alpha p -\beta T.
$$
We introduce the functional spaces $\mathbf{V} = \mathbf{H}^1_0(\Omega),\ V = H^1_0(\Omega)$, and $Q = L^2(\Omega)$. Then, the weak formulation of \eqref{eq:RobQSTPE_QS_TPE_system} reads: \textit{find $(\mathbf{u},p,T,\varphi) \in \mathbf{V} \times V \times V \times Q$ such that:}
\begin{equation}
	\label{eq:RobQSTPE_semi_discrete_cont_not_lin}
	\begin{aligned}
		& \mathcal{M}((p, T, \varphi), (q, S, \psi)) + (\boldsymbol{\Theta} \nabla T,\nabla S) - (c_f \nabla T \cdot (\mathbf{K} \nabla p),S) + (\mathbf{K} \nabla p,\nabla q) +  (2 \mu \boldsymbol{\epsilon}(\mathbf{u}),\boldsymbol{\epsilon}(\mathbf{v})) \\
		& + (\varphi,\nabla \cdot \mathbf{v}) - (\nabla \cdot \mathbf{u},\psi) = (H,s) + (g,q) + (\mathbf{f},\mathbf{v}) \qquad \forall \ (\mathbf{v},q,S,\psi) \in \mathbf{V} \times V \times V \times Q.
	\end{aligned}
\end{equation}
where the bilinear form $\mathcal{M}:V\times V \times Q\to\mathbb{R}$ is given by:
$$
\begin{aligned}
	\mathcal{M}((p, T, \varphi), (q, S, \psi)) = (b_0 (p - T),q-S) &+ ((a_0-b_0)T,S) + ((c_0-b_0)p,q) \\ 
	& + (\lambda^{-1}(\varphi + \alpha p + \beta T),\psi + \alpha q + \beta S).
\end{aligned}
$$
\begin{remark}
	The convection term $c_f \nabla T \cdot (\mathbf{K} \nabla p)$ should not be tested by an $H^1$-regular function, since it is only in $L^1(\Omega)$. However, it can be inferred from the thermal energy equation \eqref{eq:RobQSTPE_energy_cons} and the assumption on the problem data that $c_f \nabla T \cdot (\mathbf{K} \nabla p) \in H^{-1}(\Omega)$, where $H^{-1}(\Omega)$ is the dual space of $V$. Therefore, the third term in the left-hand side of \eqref{eq:RobQSTPE_semi_discrete_cont_not_lin} has to be intended as the duality product 
	$$
	\langle c_f \nabla T \cdot (\mathbf{K} \nabla p),S \rangle_{H^{-1}(\Omega),H^1_0(\Omega)}.
	$$
	For additional details on the well-posedness of coupled Darcy and heat equations, we refer to \cite{Bernardi.Dib:18}.
\end{remark}

\section{Discretization}
\label{sec:RobQSTPE_discretization}
The aim of this section is to derive the PolyDG approximation of problem \eqref{eq:RobQSTPE_QS_TPE_system}. We start by introducing some preliminaries on the discretization. Then, we show the PolyDG scheme, in which we exploit the Symmetric Weighted Interior Penalty method (WSIP) \cite{Ern2009}, in order to make the method able to cope with strong heterogeneities in the physical coefficients. A particular focus will be devoted to the linearization and stabilization procedures.

\subsection{Preliminaries}
\label{sec:RobQSTPE_DG_preliminaries}
The aim of this section is to introduce some instrumental assumptions and results on the PolyDG method. For designing the PolyDG discretization of Problem~\eqref{eq:RobQSTPE_QS_TPE_system} we start by introducing a polytopic subdivision $\mathcal{T}_h$ of the computational domain $\Omega$ and its features. An interface is defined as a planar, simplitial subset of the intersection of the boundaries of any two neighbouring elements of $\mathcal{T}_h$. 
In the following, we denote with $\mathcal{F}$, $\mathcal{F}_I$, and $\mathcal{F}_B$ the set of faces, interior faces, and boundary faces, respectively.
In what follows, we introduce the main assumptions on the mesh $\mathcal{T}_h$ \cite{Cangiani2014, CangianiDong:17}.
\begin{definition}[Polytopic regular mesh]
	\label{def:unif_regular}
	A mesh $\mathcal{T}_h$ is said to be polytopic regular if $\forall \kappa \in \mathcal{T}_h$, there exist a set of non-overlapping $d$-dimensional simplices contained in $\kappa$, denoted by $\{S_{\kappa}^F\}_{F \subset \partial \kappa}$, such that, for any face $F \subset \partial \kappa$ the following condition holds: $h_{\kappa} \lesssim d \ |S_{\kappa}^F| \ |F|^{-1}$.
\end{definition}

\begin{assumption}
	\label{ass:RobQSTPE_mesh_Th1}
	Given $\{\mathcal{T}_h\}_h, h>0$, we assume that the following properties are uniformly satisfied:
	\begin{enumerate}[start=1,label={\bfseries A.\arabic* }]
		\item \label{ass:RobQSTPE_A1} $\mathcal{T}_h$ is polytopic-regular;
			\item \label{ass:RobQSTPE_A3} For any neighbouring elements $\kappa^+, \kappa^- \in \mathcal{T}_h$, hp-local bounded variation property holds, i.e. \\$h_{\kappa^+} \lesssim h_{\kappa^-} \lesssim h_{\kappa^+}$, $p_{\kappa^+} \lesssim p_{\kappa^-} \lesssim p_{\kappa^+}$.
		\end{enumerate}
	\end{assumption}
	Note that the bounded variation hypothesis \ref{ass:RobQSTPE_A3} is introduced to avoid technicalities. Under \ref{ass:RobQSTPE_A1} the following inequality (\textit{trace-inverse} inequality) holds \cite{Cangiani2017}: 
	\begin{equation}
		\label{eq:RobQSTPE_trace_inverse_ineq}
		\| v\|_{L^2(\partial \kappa)} \le C_{\mathrm{tr}}  
		h^{-\frac12}\, \ell\, \|v\|_{L^2(\kappa)} \quad \forall v \in \mathbb{P}^{\ell}(\kappa),
	\end{equation}
	where $\mathbb{P}^{\ell}(\kappa)$ is the space of polynomials of maximum degree equal to $\ell$ in $\kappa$ and $C_{\mathrm{tr}}>0$ is independent of $\ell, h$, the number of faces per element, and the relative size of a face compared to the diamater of the element it belongs to.
	
	For the sake of simplicity, we assume that the parameters $\boldsymbol{\Theta}, \mathbf{K}$, $\mu$ and $c_f$ are element-wise constant. Then, we can introduce the following quantities:
	\begin{equation}
		\Theta_{\kappa} = \left(|\sqrt{\boldsymbol{\Theta}\rvert_{\kappa}}|_2^2 \right), \qquad K_{\kappa} = \left(|\sqrt{\mathbf{K}\rvert_{\kappa}}|_2^2 \right), \qquad \mu_{\kappa} = \mu \rvert_{\kappa},
		\qquad c_{f,\kappa} = c_f \rvert_{\kappa},
	\end{equation}
	where $|\cdot|_2$ is the $\ell^2$-norm in $\mathbb{R}^{d \times d}$. We remark that this assumption is reasonable in the context of groundwater flow models, where the data are derived through local measurements.
	
	\subsection{The PolyDG-WSIP discrete problem}
	\label{sec:RobQSTPE_DG_problem}
	In this section, we present the WSIP method \cite{Ern2009} and discuss its application to the THM problem. The key ingredient of the method is to use weighted averages instead of arithmetic ones. The use of weighted averages has been introduced for elliptic problems in \cite{Stenberg1998} and then developed for discontinuous Galerkin methods (dG-WSIP) for dealing with advection-diffusion problems with locally vanishing diffusion \cite{Ern2009}. As one of the aims of this work is to inspect the robustness with respect to the model's coefficients, we use this modification of the standard PolyDG scheme. Other than ensuring numerical robustness with respect to large heterogeneities, another advantage of dG-WSIP is that it requires minimal modifications with respect to standard dG schemes both in terms of analysis and coding.
	
	For the definition of the WSIP method we introduce the weight function $\omega^+:\mathcal{F}_I\to [0,1]$ \cite{Heinrich2002, Heinrich2003, Heinrich2005, Stenberg1998, Ern2009}. Given an interior face $F \in \mathcal{F}_I$, we denote the values taken by $\omega^+$ and $\omega^- = 1-\omega^+$ on the face $F$ as $\omega\rvert_F^{+}$ and $\omega\rvert_F^{-}$, respectively. Given the function $\omega$ we can introduce the notion of weighted averages and jump operators, denoted with $\wavg{\cdot}$ and $\jump{\cdot}$, and of normal jump, denoted by $\jump{\cdot}_n$ \cite{Arnold2002, Ern2009}:
	\begin{equation}
		\label{eq:RobQSTPE_avg_jump_operators}
		\begin{aligned}
			& \jump{a} = a^+ \mathbf{n^+} + a^- \mathbf{n^-}, \quad && \jump{\mathbf{a}} = \mathbf{a}^+ \odot \mathbf{n^+} + \mathbf{a}^- \odot \mathbf{n^-}, \quad &&\jump{\mathbf{a}}_n = \mathbf{a}^+ \cdot \mathbf{n^+} + \mathbf{a}^- \cdot \mathbf{n^-}, \\ 
			& \wavg{a} = \omega^+ a^+ + \omega^- a^, \quad && \wavg{\mathbf{a}} = \omega^+ \mathbf{a}^+ + \omega^- \mathbf{a}^-, \quad && \wavg{\mathbf{A}} = \omega^+ \mathbf{A}^+ + \omega^- \mathbf{A}^-,
		\end{aligned}
	\end{equation}
	where $\mathbf{a} \odot \mathbf{n} = \mathbf{a}\mathbf{n}^T$, and $a, \ \mathbf{a}, \ \mathbf{A}$ are (regular enough) scalar-valued, vector-valued, and tensor-valued functions, respectively. The subscript $\omega$ in the weighted-average operator is omitted whenever $\omega^+ = \omega^- = 1/2$. On boundary faces $F\in\mathcal{F}_B$, we set
	$ \jump{a} = a \mathbf{n},\ \wavg{a} = a,\ \jump{\mathbf{a}} = \mathbf{a} \odot \mathbf{n},\ \wavg{\mathbf{a}} = \mathbf{a},\ \jump{\mathbf{a}}_n = \mathbf{a} \cdot \mathbf{n},\ \wavg{\mathbf{A}} = \mathbf{A}.$ For the averages, this corresponds to consider $\omega^\pm$ single-valued and equal to $1$.
	
	We start deriving the PolyDG-WSIP formulation of problem \eqref{eq:RobQSTPE_semi_discrete_cont_not_lin} by intoducing the discrete spaces that are used in the following. Given $m,\ell \geq 1$, we define:
	\begin{equation}
		\begin{aligned}
			Q_h^{m} &= \left\{ \psi \in L^2(\Omega) : \psi |_{\kappa} \in \mathbb{P}^{m}(\kappa) \  \forall \kappa \in \mathcal{T}_h \right\}\hspace{-0.5mm}, \;
			V_h^{\ell} = \left\{ v \in L^2(\Omega) : v |_{\kappa} \in \mathbb{P}^{\ell}(\kappa) \  \forall \kappa \in \mathcal{T}_h \right\}\hspace{-0.5mm}, \;
			\mathbf{V}_h^{\ell} = \left[V_h^{\ell}\right]^d\hspace{-1mm}.
		\end{aligned}
	\end{equation}
	The PolyDG-WSIP discretization of problem \eqref{eq:RobQSTPE_semi_discrete_cont_not_lin} reads:
	\textit{find $(\mathbf{u}_h,p_h,T_h,\varphi_h) \in \mathbf{V}_h^{\ell} \times V_h^{\ell} \times V_h^{\ell} \times Q_h^m$ such that $\forall \ (\mathbf{v}_h,q_h,S_h,\varphi_h) \in \mathbf{V}_h^{\ell} \times V_h^{\ell} \times V_h^{\ell} \times Q_h^m$ it holds}
	\begin{equation}
		\label{eq:RobQSTPE_discrete_weak_form_Dtime_1.2}
		\begin{aligned}
			& \mathcal{M}((p_h, T_h, \varphi_h),(q_h,S_h,\psi_h)) + \mathcal{A}_h^{T}(T_h,S_h) + \widetilde{\mathcal{C}}_h^{\text{stab}}(T_h,p_h,S_h)
			+ \mathcal{A}_h^{p}(p_h,q_h) + \mathcal{A}_h^{e}(\mathbf{u}_h,\mathbf{v}_h) \\
			&\; - \mathcal{B}_h(\varphi_h,\mathbf{v}_h) +  \mathcal{B}_h(\psi_h, \mathbf{u}_h) +  \mathcal{D}_h(\varphi_h,\psi_h)
			= \left((\mathbf{f}, g, H),(\mathbf{v}_h, q_h, S_h)\right),
		\end{aligned}
	\end{equation}
	where the bilinear and trilinear forms are defined by:
	\begin{equation}
		\label{eq:RobQSTPE_bilinear_forms_discr}
		\begin{aligned}
			& \mathcal{A}_h^T(T,S) = \left(\boldsymbol{\Theta}\nabla_h T, \nabla_h S\right) - \sum_{F \in \mathcal{F}} \int_F \left(\wTavg{\boldsymbol{\Theta}\nabla_h T} \mkern-2.5mu \cdot \mkern-2.5mu \jump{S} + \jump{T} \mkern-2.5mu \cdot \mkern-2.5mu \wTavg{\boldsymbol{\Theta}\nabla_h S} - \sigma \jump{T} \mkern-2.5mu \cdot \mkern-2.5mu \jump{S}\right),\\
			& \mathcal{A}_h^p(p,q) = (\mathbf{K} \nabla_h p,\nabla_h q) - \sum_{F \in \mathcal{F}} \int_F \left(\wPavg{\mathbf{K} \nabla_h p} \mkern-2.5mu \cdot \mkern-2.5mu \jump{q} + \jump{p} \mkern-2.5mu \cdot \mkern-2.5mu \wPavg{\mathbf{K} \nabla_h q} - \xi \jump{p} \mkern-2.5mu \cdot \mkern-2.5mu \jump{q}\right),\\
			& \mathcal{A}_h^e(\mathbf{u},\mathbf{v}) = (2 \mu\boldsymbol{\epsilon}_h(\mathbf{u}),\boldsymbol{\epsilon}_h(\mathbf{v}))
			- \sum_{F \in \mathcal{F}} \int_F \left( \wUavg{2 \mu\boldsymbol{\epsilon}_h(\mathbf{u})} \mkern-2.5mu : \mkern-2.5mu \jump{\mathbf{v}} + \jump{\mathbf{u}} \mkern-2.5mu : \mkern-2.5mu \wUavg{2 \mu\boldsymbol{\epsilon}_h(\mathbf{v})} -  \zeta \jump{\mathbf{u}} \mkern-2.5mu : \mkern-2.5mu \jump{\mathbf{v}}\right),\\
			& \mathcal{B}_h(\varphi,\mathbf{v}) = - (\varphi,\nabla_h \mkern-2.5mu \cdot \mkern-2.5mu \mathbf{v}) + \sum_{F \in \mathcal{F}} \int_F \avg{ \varphi} \mkern-2.5mu \cdot \mkern-2.5mu \jump{\mathbf{v}}_n,\\
			& \begin{aligned}
				\widetilde{\mathcal{C}}^{\text{stab}}_h (T,p,S) = \ & \big( -c_f \left(\mathbf{K} \ \nabla_h p\right) \cdot \nabla_h T, S \big) - \sum_{F \in \mathcal{F}_I} \int_{F} \left( \avg{ - c_f \ \mathbf{K} \nabla_h p } \cdot \jump{T} \right) \avg{S} \\
				& + \frac{1}{2}\sum_{F \in \mathcal{F}} \int_F \ \left| \avg{ - c_f \ \mathbf{K} \nabla_h p} \cdot \mathbf{n} \right| \jump{T} \mkern-2.5mu \cdot \mkern-2.5mu \jump{S} - \frac{1}{2}\sum_{F \in \mathcal{F}_B} \int_F \  (- c_f \ \mathbf{K} \nabla_h p) \cdot \mathbf{n} \, T \, S \end{aligned} \\
			&\mathcal{D}_h(\varphi,\psi) = \sum_{F \in \mathcal{F}_I} \int_F \varrho \jump{\varphi} \mkern-2.5mu \cdot \mkern-2.5mu \jump{\psi}.
		\end{aligned}
	\end{equation}
	For all $w\in V_h^{\ell}$ and $\mathbf{w}\in \mathbf{V}_h^{\ell}$, $\nabla_h w$ and $\divh{\mathbf{w}}$ denote the broken differential operators whose restrictions to each element $k \in \mathcal{T}_h$ are defined as $\nabla w_{|k}$ and $\div{\mathbf{w}}_{|k}$, respectively, and $\boldsymbol{\epsilon}_h(\mathbf{u}) = \left(\nabla_h \mathbf{u} + \nabla_h \mathbf{u}^T\right)/2$. In \eqref{eq:RobQSTPE_bilinear_forms_discr} we set:
	\begin{equation}
		\omega_{\boldsymbol{\Theta}}^{\pm} = \frac{\delta_{\boldsymbol{\Theta}_n}^{\mp}}{\delta_{\boldsymbol{\Theta}_n}^{+} + \delta_{\boldsymbol{\Theta}_n}^{-}}, \qquad \omega_{\mathbf{K}}^{\pm} = \frac{\delta_{\mathbf{K}_n}^{\mp}}{\delta_{\mathbf{K}_n}^{+} + \delta_{\mathbf{K}_n}^{-}}, \qquad 
		\omega_{\mu}^{\pm} = \frac{\mu^{\mp}}{\mu^{+} + \mu^{-}},
	\end{equation}
	where $\delta_{\boldsymbol{\Theta}_n}^{\pm} = \mathbf{n}^{{\pm}^T} \, \boldsymbol{\Theta}^{\pm} \, \mathbf{n}^{{\pm}}$, $\delta_{\mathbf{K}_n}^{\pm} = \mathbf{n}^{{\pm}^T} \, \mathbf{K}^{\pm} \, \mathbf{n}^{{\pm}}$. Note that, the PolyDG-WSIP method requires also a different definition of penalty coefficients with respect to standard IP method \cite{Arnold1982, Wheeler1978, Ern2021, Cangiani2017}. Thus, the stabilization functions $\sigma, \xi, \zeta, \varrho \in L^{\infty}(\mathcal{F}_h)$ appearing in \eqref{eq:RobQSTPE_bilinear_forms_discr} are defined according to \cite{Ern2009} as:
	\begin{equation}
		\label{eq:RobQSTPE_stabilization_func}
		\begin{aligned}
			\sigma &= \left\{\begin{aligned}
				&\alpha_1 \gamma_{\boldsymbol{\Theta}} \underset{\kappa \in \{\kappa^+,\kappa^-\}}{\mbox{max}} \left(\frac{\ell^2}{h_{\kappa}}\right) \quad & F \in \mathcal{F}_I,\\
				&\alpha_1 \overline{\Theta}_{\kappa} \frac{\ell^2}{h_{\kappa}} \quad & F \in \mathcal{F}_B,\\
			\end{aligned}
			\right.
			\qquad
			\xi &&= \left\{\begin{aligned}
				&\alpha_2 \gamma_{\mathbf{K}} \underset{\kappa \in \{\kappa^+,\kappa^-\}}{\mbox{max}}\left(\frac{\ell^2}{h_{\kappa}}\right) \quad & F \in \mathcal{F}_I,\\
				&\alpha_2 \overline{K}_{\kappa} \frac{\ell^2}{h_{\kappa}} \quad & F \in \mathcal{F}_B,\\
			\end{aligned}
			\right.\\
			\zeta &= \left\{\begin{aligned}
				&\alpha_3 \gamma_{\mu} \underset{\kappa \in \{\kappa^+,\kappa^-\}}{\mbox{max}}\left(\frac{\ell^2}{h_{\kappa}}\right) \quad & F \in \mathcal{F}_I,\\
				&\alpha_3 \mu_{\kappa} \frac{\ell^2}{h_{\kappa}} \quad & F \in \mathcal{F}_B,\\
			\end{aligned}
			\right.
			\qquad
			\varrho &&= \left\{\begin{aligned}
				&\alpha_4 \underset{\kappa \in \{\kappa^+,\kappa^-\}}{\mbox{min}}\left(\frac{h_{\kappa}}{m}\right) \quad & F \in \mathcal{F}_I,\\
				&\alpha_4 \frac{h_{\kappa}}{m} \quad & F \in \mathcal{F}_B,\\
			\end{aligned}
			\right.\\
		\end{aligned}   
	\end{equation}
	where $\alpha_1, \alpha_2, \alpha_3, \alpha_4 \in \mathbb{R}$ are positive constants to be properly defined, $h_{\kappa}$ is the diameter of the element $\kappa \in \mathcal{T}_h$, and the coefficients $\gamma_{\boldsymbol{\Theta}}$, $\gamma_{\mathbf{K}}$, $\gamma_{\mu}$ are given by:
	\begin{equation}
		\gamma_{\boldsymbol{\Theta}} = \frac{ \delta_{\boldsymbol{\Theta}_n}^{+} \, \delta_{\boldsymbol{\Theta}_n}^{-}}{\delta_{\boldsymbol{\Theta}_n}^{+} + \delta_{\boldsymbol{\Theta}_n}^{-}}, \qquad \gamma_{\mathbf{K}} = \frac{\delta_{\mathbf{K}_n}^{+} \, \delta_{\mathbf{K}_n}^{-}}{\delta_{\mathbf{K}_n}^{+} + \delta_{\mathbf{K}_n}^{-}}, \qquad \gamma_{\mu}^{\pm} = \frac{\mu^{+} \, \mu^{-}}{\mu^{+} + \mu^{-}}.
	\end{equation}
	We point out that in the discrete formulation above, we have decided to consider the same polynomial degree for the spaces $V_h^{\ell}$ and $\mathbf{V}_h^{\ell}$, because we are mainly interested in approximation schemes yielding the same accuracy for all the primary variables.
	\begin{remark}
		Note that, in \eqref{eq:RobQSTPE_discrete_weak_form_Dtime_1.2}, we have added an additional weakly consistent stabilization term $\mathcal{D}_h$ for the total pressure following the dG discretization of the Stokes problem analyzed in \cite{antonietti2020_stokesDG}.
	\end{remark}
	
	\subsubsection{Treatment of the advection term in the dG framework}
	\label{sec:RobQSTPE_advection_dg}
	
	In this section, we detail the derivation and the analysis of the first two terms that appear in the trilinear form $\widetilde{\mathcal{C}}^{\text{stab}}_h$. We observe that, assuming that the pressure field is known, the non-linear convective term in \eqref{eq:RobQSTPE_semi_discrete_cont_not_lin} reduces to an advection-like term.
	Thus, we treat the transport term in the dG framework following approach of \cite{Ayuso2009, Brezzi2004, DiPietro2012, Houston2002}. 
	For the sake of exposition, we introduce the trilinear form 
	\begin{equation}
		\label{eq:RobQSTPE_Ctilde}
		\begin{aligned}
			\widetilde{\mathcal{C}}_h (T,p,S) = \ & \big( -c_f \left(\mathbf{K} \ \nabla_h p\right) \cdot \nabla_h T, S \big) - \sum_{F \in \mathcal{F}_I} \int_{F} \left( \avg{ -c_f \ \mathbf{K} \nabla_h p } \mkern-2.5mu \cdot \mkern-2.5mu \jump{T} \right) \avg{S} \quad T, p, S \in V_h^{\ell}.
		\end{aligned}
	\end{equation}
		%
		The following proposition will be instrumental for deriving the a-priori analysis of problem \eqref{eq:RobQSTPE_discrete_weak_form_Dtime_1.2}.
		\begin{proposition}
			\label{prop:RobQSTPE_trasporto}
			The trilinear form $\widetilde{\mathcal{C}}_h$ defined in \eqref{eq:RobQSTPE_Ctilde} satisfies for any  $T, p \in V_h^{\ell}$:
			\begin{equation}
				\label{eq:RobQSTPE_trasporto}
				\begin{aligned}
					\widetilde{\mathcal{C}}_h(T, p, T) = - \frac{1}{2} \big(\divh{\left( - c_f  \mathbf{K} \nabla_h p \right)}, T^2 \big) + \frac{1}{2} \sum_{F \in \mathcal{F}} \int_F \jump{ -c_f \mathbf{K} \nabla_h p}_n \avg{T^2}. 
				\end{aligned}
			\end{equation}
		\end{proposition}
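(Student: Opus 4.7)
The plan is to set $S = T$ in the trilinear form $\widetilde{\mathcal{C}}_h$, rewrite the volume term using the chain rule $T\,\nabla_h T = \tfrac{1}{2}\nabla_h T^2$, and then integrate by parts element by element, reorganizing the resulting boundary contributions with DG algebraic identities so that the face-coupling term in $\widetilde{\mathcal{C}}_h$ cancels exactly.

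More concretely, I would first abbreviate $\boldsymbol{\beta} = -c_f\,\mathbf{K}\,\nabla_h p$, so that
\[
\widetilde{\mathcal{C}}_h(T,p,T) = \big(\boldsymbol{\beta}\cdot\nabla_h T,\,T\big) - \sum_{F\in\mathcal{F}_I}\int_F \big(\avg{\boldsymbol{\beta}}\cdot\jump{T}\big)\avg{T}.
\]
Since $(\boldsymbol{\beta}\cdot\nabla_h T)T = \tfrac{1}{2}\boldsymbol{\beta}\cdot\nabla_h T^2$ on each element, I would apply the divergence theorem element-wise to obtain
\[
\big(\boldsymbol{\beta}\cdot\nabla_h T,\,T\big) = -\tfrac{1}{2}\big(\divh{\boldsymbol{\beta}},T^2\big) + \tfrac{1}{2}\sum_{\kappa\in\mathcal{T}_h}\int_{\partial\kappa}(\boldsymbol{\beta}\cdot\mathbf{n}_\kappa)\,T^2.
\]
The volume piece already matches the first term on the right-hand side of \eqref{eq:RobQSTPE_trasporto}, so the work reduces to handling the boundary sum.

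The key step will be to regroup $\sum_\kappa\int_{\partial\kappa}(\boldsymbol{\beta}\cdot\mathbf{n}_\kappa)T^2$ as a sum over mesh faces. On an interior face I would use the standard DG identity
\[
\boldsymbol{\beta}^+\mkern-2.5mu\cdot\mkern-2.5mu\mathbf{n}^+ b^+ + \boldsymbol{\beta}^-\mkern-2.5mu\cdot\mkern-2.5mu\mathbf{n}^- b^- = \jump{\boldsymbol{\beta}}_n\avg{b} + \avg{\boldsymbol{\beta}}\cdot\jump{b}
\]
(valid for any scalar $b$ and vector $\boldsymbol{\beta}$) with $b = T^2$. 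Combining this with the scalar-product jump formula $\jump{T^2} = 2\,\avg{T}\jump{T}$, the interior contribution splits into the term $\tfrac{1}{2}\int_F \jump{\boldsymbol{\beta}}_n\avg{T^2}$ plus exactly $\int_F \avg{T}\,\avg{\boldsymbol{\beta}}\cdot\jump{T}$. The latter is precisely the face term already present in $\widetilde{\mathcal{C}}_h$ (with the opposite sign), and cancels upon subtraction. Boundary faces contribute $\tfrac{1}{2}\int_F (\boldsymbol{\beta}\cdot\mathbf{n})T^2$, which by the stated boundary-face conventions $\jump{\boldsymbol{\beta}}_n = \boldsymbol{\beta}\cdot\mathbf{n}$ and $\avg{T^2} = T^2$ is exactly $\tfrac{1}{2}\int_F \jump{\boldsymbol{\beta}}_n\avg{T^2}$, allowing the two face sums over $\mathcal{F}_I$ and $\mathcal{F}_B$ to be merged into the single sum over $\mathcal{F}$ appearing in \eqref{eq:RobQSTPE_trasporto}.

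The proof is essentially an exercise in careful bookkeeping: the only possible obstacle is selecting the correct form of the DG product/jump identities so that the face coupling in $\widetilde{\mathcal{C}}_h$ matches, up to sign, the cross term produced by integration by parts. Once the identities $\jump{ab} = \avg{a}\jump{b} + \avg{b}\jump{a}$ and $\sum_\kappa \int_{\partial\kappa} = \sum_{F\in\mathcal{F}_I}\int_F [\![\cdot]\!] + \sum_{F\in\mathcal{F}_B}\int_F$ are applied consistently, the identity \eqref{eq:RobQSTPE_trasporto} follows directly.
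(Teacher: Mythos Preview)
Your proposal is correct and uses essentially the same ingredients as the paper's proof: element-wise integration by parts together with the standard DG jump/average identities to rewrite the element-boundary sum over faces and achieve cancellation with the face term in $\widetilde{\mathcal{C}}_h$. The only cosmetic difference is the order of operations: the paper keeps $S$ general, derives a skew-symmetric-looking expression for $\widetilde{\mathcal{C}}_h(T,p,S)$, and sets $S=T$ at the end, whereas you set $S=T$ from the outset and exploit $T\,\nabla_h T=\tfrac{1}{2}\nabla_h T^2$ directly; both routes lead to the same identity with equal effort.
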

		
		\begin{proof}
			We start the proof by recalling the following property that holds for any given product of scalar- and vector-valued functions $\mathbf{w} \in \mathbf{V}_h^{\ell-1}$, $v \in V_h^{\ell}$: $\divh{(\mathbf{w} \, v)} = \mathbf{w} \cdot \nabla_h v + (\divh{\mathbf{w}}) \, v$.
			
			We exploit the previous identity and element-wise integration by parts to rewrite $\widetilde{C}_h$ as:
			\begin{equation}
				\label{eq:RobQSTPE_positivity_1}
				\begin{aligned}
					\widetilde{\mathcal{C}}_h (T,p,S) = \ & \frac{1}{2} \sum_{\kappa \in \mathcal{T}_h} \int_{\kappa} \bigg( (-c_f \mathbf{K} \ \nabla_h p ) \cdot \nabla T \bigg) S + \frac{1}{2} \sum_{\kappa \in \mathcal{T}_h} \int_{\kappa} \divh{\bigg(\left(-c_f \mathbf{K} \ \nabla_h p \right) T\bigg)} \, S \\
					& - \frac{1}{2} \sum_{\kappa \in \mathcal{T}_h} \int_{\kappa} \divh{\left(-c_f \mathbf{K} \ \nabla_h p \right)} \, T \, S - \sum_{F \in \mathcal{F}_I} \int_F \left( \avg{ -c_f \mathbf{K} \ \nabla_h p } \mkern-2.5mu \cdot \mkern-2.5mu \jump{T} \right) \avg{S}. \\
					= \ & \frac{1}{2} \sum_{\kappa \in \mathcal{T}_h} \int_{\kappa} \bigg( (-c_f \mathbf{K} \ \nabla_h p ) \cdot \nabla T \bigg) S - \frac{1}{2} \sum_{\kappa \in \mathcal{T}_h} \int_{\kappa} \bigg(\left(-c_f \mathbf{K} \ \nabla_h p \right) T\bigg) \cdot \nabla S \\
					& + \frac{1}{2} \sum_{\kappa \in \mathcal{T}_h} \int_{\partial \kappa} \bigg(\left(-c_f \mathbf{K} \ \nabla_h p \right) \cdot \mathbf{n} \bigg) \,  T \, S - \frac{1}{2} \sum_{\kappa \in \mathcal{T}_h} \int_{\kappa} \divh{\left(-c_f \mathbf{K} \ \nabla_h p \right)} \, T \, S \\
					& - \sum_{F \in \mathcal{F}_I} \int_F \left( \avg{ -c_f \mathbf{K} \ \nabla_h p } \mkern-2.5mu \cdot \mkern-2.5mu \jump{T} \right) \avg{S}. \\
				\end{aligned}
			\end{equation}
			Reasoning as in \cite{Arnold1982}, the third term in the right-hand side of \eqref{eq:RobQSTPE_positivity_1} can be written as:
			\begin{equation}
				\label{eq:RobQSTPE_positivity_magicformula}
				\begin{aligned}
					\frac{1}{2} \sum_{\kappa \in \mathcal{T}_h} & \int_{\partial \kappa} \bigg(\left(-c_f \mathbf{K} \ \nabla_h p \right) \cdot \mathbf{n} \bigg) \,  T \, S = \\
					= \ & \frac{1}{2} \sum_{F \in \mathcal{F}_I} \bigg[ \int_F \avg{-c_f \mathbf{K} \ \nabla_h p} \cdot \jump{T \, S} + \int_F \jump{-c_f \mathbf{K} \ \nabla_h p}_n \avg{T \, S} \bigg] \\
					& + \frac{1}{2} \sum_{F \in \mathcal{F}_B} \int_F \bigg(\left(-c_f \mathbf{K} \ \nabla_h p \right) \cdot \mathbf{n} \bigg) \, T \, S \\
					= \ & \frac{1}{2} \sum_{F \in \mathcal{F}_I} \int_F \left( \avg{-c_f \mathbf{K} \ \nabla_h p} \cdot \jump{T} \right) \, \avg{S} + \frac{1}{2} \sum_{F \in \mathcal{F}_I} \int_F \left( \avg{-c_f \mathbf{K} \ \nabla_h p} \cdot \jump{S} \right) \, \avg{T} \\
					& + \frac{1}{2} \sum_{F \in \mathcal{F}_I} \int_F \jump{-c_f \mathbf{K} \ \nabla_h p}_n \avg{T \, S} \bigg] + \frac{1}{2} \sum_{F \in \mathcal{F}_B} \int_F \bigg(\left(-c_f \mathbf{K} \ \nabla_h p \right) \cdot \mathbf{n} \bigg) \, T \, S.
				\end{aligned}
			\end{equation}
			By plugging this expression into \eqref{eq:RobQSTPE_positivity_1} and grouping together the terms that involve the integral on the faces, we have:
			\begin{equation}
				\label{eq:RobQSTPE_positivity_3}
				\begin{aligned}
					\widetilde{\mathcal{C}}_h (T,p,S) = \ & \frac{1}{2} \sum_{\kappa \in \mathcal{T}_h} \int_{\kappa} \bigg( (-c_f \mathbf{K} \ \nabla_h p ) \cdot \nabla T \bigg) S - \frac{1}{2} \sum_{\kappa \in \mathcal{T}_h} \int_{\kappa} \bigg(\left(-c_f \mathbf{K} \ \nabla_h p \right) T\bigg) \cdot \nabla S \\
					& + \frac{1}{2} \sum_{F \in \mathcal{F}_I} \int_F \left( \avg{-c_f \mathbf{K} \ \nabla_h p} \cdot \jump{S} \right) \, \avg{T} - \frac{1}{2} \sum_{F \in \mathcal{F}_I} \int_F \left( \avg{-c_f \mathbf{K} \ \nabla_h p} \cdot \jump{T} \right) \, \avg{S}\\
					& + \frac{1}{2} \sum_{F \in \mathcal{F}} \int_F \jump{-c_f \mathbf{K} \ \nabla_h p}_n \avg{T \, S} - \frac{1}{2} \sum_{\kappa \in \mathcal{T}_h} \int_{\kappa} \divh{\left(-c_f \mathbf{K} \ \nabla_h p \right)} \, T \, S. \\
				\end{aligned}
			\end{equation}
			Then, we conclude the proof by taking $S = T$ in \eqref{eq:RobQSTPE_positivity_3} to infer:
			\begin{equation}
				\label{eq:RobQSTPE_positivity_4}
				\begin{aligned}
					\widetilde{\mathcal{C}}_h (T,p,T) = - \frac{1}{2} \sum_{\kappa \in \mathcal{T}_h} \int_{\kappa} \divh{\left(-c_f \mathbf{K} \ \nabla_h p \right)} \, T^2 + \frac{1}{2} \sum_{F \in \mathcal{F}} \int_F \jump{-c_f \mathbf{K} \ \nabla_h p}_n \avg{T^2}. 
				\end{aligned}
			\end{equation}
		\end{proof}
		\begin{remark}
			\label{rem:skew_ChBh}
			Owing to the definition of the bilinear form $\mathcal{B}_h$ in \eqref{eq:RobQSTPE_bilinear_forms_discr}, we can also express \eqref{eq:RobQSTPE_trasporto} as $$
			\widetilde{\mathcal{C}}_h(T, p, T) = \frac{1}{2} \mathcal{B}_h(T^2, -c_f \, \mathbf{K} \, \nabla_h p ), \quad T, p, \in V_h^{\ell}.
			$$
		\end{remark}
		
		\subsubsection{Stabilization of the trilinear form}
		\label{sec:RobQSTPE_advection_stabilization}
		In this section, we focus on the last two terms appearing in $\widetilde{\mathcal{C}}_h^{\text{stab}}$ \eqref{eq:RobQSTPE_bilinear_forms_discr}. The formulation $\widetilde{\mathcal{C}}_h$ for the transport term may suffer when dealing with high Peclèt number, namely when the Darcy velocity $-c_f(\mathbf{K} \ \nabla_h p_h)$ is large compared to thermal diffusion. 
		In \cite{AntoniettiBonetti2022}, it has been observed that this may be the case when the thermal conductivity coefficient is significantly smaller than the permeability coefficient. In order to circumvent this issue, we introduce two suitable stabilizations.
		
		\medskip
		\textit{Upwind.} First, we introduce an upwinding stabilization following \cite{Ern2009, Houston2002, Ayuso2009}. In principle, this stabilization would consist in a different definition of the average, however is has been proven that it is essentially equivalent to an additional jump-jump stabilization term scaled by an appropriate coefficient \cite{Brezzi2004, DiPietro2012}. We denote by $\widetilde{\mathcal{C}}^{\text{uw}}_h$ the upwinded version of the trilinear form $\widetilde{\mathcal{C}}_h$:
		\begin{equation}
			\label{eq:RobQSTPE_upwinding}
			\begin{aligned}
				\widetilde{\mathcal{C}}^{\text{uw}}_h(T,p,S) = \widetilde{\mathcal{C}}_h(T,p,S) + s^{\text{uw}}_h(T,p,S) = \widetilde{\mathcal{C}}_h(T,p,S) + \sum_{F \in \mathcal{F}_I} \int_F \varpi \ \frac{\left| \avg{ -c_f \ \mathbf{K} \nabla_h p} \cdot \mathbf{n} \right|}{2} \jump{T} \mkern-2.5mu \cdot \mkern-2.5mu \jump{S},
			\end{aligned}
		\end{equation}
		with $T, p, S \in V_h^{\ell}$. In \eqref{eq:RobQSTPE_upwinding}, $\varpi$ is a user-dependent parameter; if we set $\varpi = 1$, then we get the classical upwind fluxed in the finite-volume scheme (this will be our choice in the rest of the work). We remark that, considering the PolyDG formulation of our problem, the use of jump-jump stabilization \eqref{eq:RobQSTPE_upwinding} is very convenient from the coding point of view, as we already have the same kind of stabilization coming from the IP formulation of the bilinear form $\mathcal{A}_h^T$.
		
		\medskip
		\textit{Enforce the inflow condition.} In the limit of vanishing thermal conductivity, we notice that the upwinding correction is not sufficient for ensuring robustness. Our idea, for further stabilizing the scheme, consists in mimicking the imposition of inflow boundary conditions in the hyperbolic case \cite{Suli2000}. Indeed, 
		Dirichlet conditions are weakly enforced through the bilinear form $\mathcal{A}_h^T$ and, as a result, we lose control of them for vanishing $\boldsymbol{\Theta}$. To this aim, we add two boundary terms to $\widetilde{\mathcal{C}}^{\text{uw}}_h(T,p,S)$ and we obtain the definition of the stabilized trilinear form given in \eqref{eq:RobQSTPE_bilinear_forms_discr}:
		\begin{equation}
			\label{eq:RobQSTPE_inflow}
			\begin{aligned}
				\widetilde{\mathcal{C}}^{\text{stab}}_h(T,p,S) = \ & \widetilde{\mathcal{C}}^{\text{uw}}_h(T,p,S) + s^{\text{inflow}}_h(T,p,S) \\
				= \ & \widetilde{\mathcal{C}}^{\text{uw}}_h(T,p,S) + \sum_{F \in \mathcal{F}_B} \int_F \left(-c_f \ \mathbf{K} \nabla_h p \cdot \mathbf{n} \right)^\ominus \, T \, S,
			\end{aligned}
		\end{equation}
		with $T, p, S \in V_h^{\ell}$ and with $(\cdot)^\ominus = \frac12(|\cdot|-(\cdot))$ denoting the negative part operator.
		\begin{remark}
			Given a face $F \in \mathcal{F}_B$ we observe that the coefficients appearing in the integrand function coming from $s^{\mathrm{inflow}}_h(T,p,S)$ read:
			$$
			\left(-c_f \ \mathbf{K} \nabla_h p \cdot \mathbf{n} \right)^\ominus = 
			\begin{cases}
				-\left(-c_f \ \mathbf{K} \nabla_h p \right) \cdot \mathbf{n} & \text{if $\left(-c_f \ \mathbf{K} \nabla_h p \right) \cdot \mathbf{n} < 0$ (inflow)}, \\
				0, & \text{if $\left(-c_f \ \mathbf{K} \nabla_h p \right) \cdot \mathbf{n} \geq 0$ (outflow)}. \\
			\end{cases}
			$$
			Note that the coefficient $-\left(-c_f \ \mathbf{K} \nabla_h p \right) \cdot \mathbf{n}$ is positive when $\left(-c_f \ \mathbf{K} \nabla_h p \right) \cdot \mathbf{n} < 0$. This means that via $s^{\mathrm{inflow}}_h$ we are effectively enforcing the boundary conditions at the inflow boundary.
		\end{remark}
		
		As a corollary to Proposition~\ref{prop:RobQSTPE_trasporto} and Remark ~\ref{rem:skew_ChBh}, we have the following useful result:
		\begin{proposition}
			\label{prop:RobQSTPE_trasporto_uw}
			The trilinear form $\widetilde{\mathcal{C}}^{\text{stab}}_h$ defined in \eqref{eq:RobQSTPE_bilinear_forms_discr} satisfies for any $T, p \in V_h^{\ell}.$:
			\begin{equation}
				\label{eq:RobQSTPE_trasporto_stab}
				\begin{aligned}
					\widetilde{\mathcal{C}}^{\mathrm{stab}}_h(T, p, T) = \ & - \frac{1}{2} \big(\divh{\left( - c_f  \mathbf{K} \nabla_h p \right)}, T^2 \big) + \frac{1}{2} \sum_{F \in \mathcal{F}} \int_F \jump{ -c_f \mathbf{K} \nabla_h p}_n \avg{T^2} \\
					& + \sum_{F \in \mathcal{F}} \int_F \ \frac{\left| \avg{ - c_f \ \mathbf{K} \nabla_h p} \cdot \mathbf{n} \right|}{2} \jump{T}^2 - \sum_{F \in \mathcal{F}_B} \int_F \ \frac{ (- c_f \ \mathbf{K} \nabla_h p) \cdot \mathbf{n}}{2} \, T^2 \\
					= \ & \frac{\mathcal{B}_h(T^2, -c_f \, \mathbf{K} \, \nabla_h p )}{2} +\hspace{-1mm} \sum_{F \in \mathcal{F}_I}\hspace{-0.5mm} \int_F \hspace{-0.5mm}\frac{\left| \avg{ - c_f \mathbf{K} \nabla_h p} \cdot \mathbf{n} \right|}{2} \jump{T}^2 
					+\hspace{-1.5mm} \sum_{F \in \mathcal{F}_B}\hspace{-1mm} \int_F (- c_f \mathbf{K} \nabla_h p \cdot \mathbf{n})^{\ominus} \, T^2,
				\end{aligned}
			\end{equation}
			with $(\cdot)^\ominus = \frac12(|\cdot|-(\cdot))$ denoting the negative part operator.
		\end{proposition}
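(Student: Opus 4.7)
The plan is to decompose $\widetilde{\mathcal{C}}^{\mathrm{stab}}_h = \widetilde{\mathcal{C}}_h + s^{\mathrm{uw}}_h + s^{\mathrm{inflow}}_h$ according to \eqref{eq:RobQSTPE_inflow}, evaluate each summand at $S=T$, and then repackage the boundary contributions. For the first summand, Proposition~\ref{prop:RobQSTPE_trasporto} applied directly to $\widetilde{\mathcal{C}}_h(T,p,T)$ delivers the volume divergence term $-\tfrac{1}{2}(\divh{(-c_f\mathbf{K}\nabla_h p)}, T^2)$ and the normal-jump face term $\tfrac{1}{2}\sum_{F\in\mathcal{F}}\int_F \jump{-c_f\mathbf{K}\nabla_h p}_n \avg{T^2}$, which are exactly the first two summands on the right-hand side of \eqref{eq:RobQSTPE_trasporto_stab}.

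Next I would make the two stabilization pieces explicit at $S=T$. Choosing $\varpi=1$ in \eqref{eq:RobQSTPE_upwinding} and using $\jump{T}\cdot\jump{T}=\jump{T}^2$ on interior faces gives $s^{\mathrm{uw}}_h(T,p,T) = \tfrac{1}{2}\sum_{F\in\mathcal{F}_I}\int_F |\avg{-c_f\mathbf{K}\nabla_h p}\cdot\mathbf{n}|\,\jump{T}^2$, while by \eqref{eq:RobQSTPE_inflow} one has $s^{\mathrm{inflow}}_h(T,p,T) = \sum_{F\in\mathcal{F}_B}\int_F (-c_f\mathbf{K}\nabla_h p\cdot\mathbf{n})^{\ominus}\,T^2$. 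Adding these to the output of Proposition~\ref{prop:RobQSTPE_trasporto} and invoking Remark~\ref{rem:skew_ChBh} to identify the divergence and normal-jump face contributions as $\tfrac{1}{2}\mathcal{B}_h(T^2,-c_f\mathbf{K}\nabla_h p)$ already produces the second equality of \eqref{eq:RobQSTPE_trasporto_stab}.

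To obtain the first equality, the only remaining step is boundary bookkeeping. Setting $a := (-c_f\mathbf{K}\nabla_h p)\cdot\mathbf{n}$ and noting that on any $F\in\mathcal{F}_B$ one has $\avg{\cdot}=\cdot$ and $\jump{T}^2=T^2$, the elementary identity $(a)^{\ominus}=\tfrac{1}{2}(|a|-a)$ rewrites $s^{\mathrm{inflow}}_h(T,p,T) = \tfrac{1}{2}\sum_{F\in\mathcal{F}_B}\int_F|a|\,\jump{T}^2 - \tfrac{1}{2}\sum_{F\in\mathcal{F}_B}\int_F a\,T^2$. Absorbing the first of these boundary contributions into $s^{\mathrm{uw}}_h(T,p,T)$ extends the jump-jump sum from $\mathcal{F}_I$ to the whole of $\mathcal{F}$, while the second contribution yields the outflow correction $-\tfrac{1}{2}\sum_{F\in\mathcal{F}_B}\int_F (-c_f\mathbf{K}\nabla_h p)\cdot\mathbf{n}\,T^2$ appearing in the first equality. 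The proof is thus entirely algebraic once Proposition~\ref{prop:RobQSTPE_trasporto} is in hand; the only mild subtlety is the sign manipulation that repackages the interior upwind term and the boundary inflow term either as a full-$\mathcal{F}$ sum with an outflow correction or as the cleaner negative-part form, and this reduces to the single identity $\tfrac{1}{2}(|a|-a)=(a)^{\ominus}$.
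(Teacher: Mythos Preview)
Your proposal is correct and follows exactly the route the paper indicates: the paper presents this result as a direct corollary of Proposition~\ref{prop:RobQSTPE_trasporto} and Remark~\ref{rem:skew_ChBh}, and your argument spells out precisely that computation, with the only nontrivial step being the boundary repackaging via $(a)^{\ominus}=\tfrac{1}{2}(|a|-a)$, which is the elementary identity needed to pass between the two displayed forms in \eqref{eq:RobQSTPE_trasporto_stab}.
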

		
		\subsection{Linearization}
		\label{sec:RobQSTPE_linearization}
		For tackling the non-linear convective term, we introduce a fixed-point iteration algorithm, as in \cite{Brun2018, AntoniettiBonetti2022}. 
		Differently from these works, we design a linearization of the trilinear form $\widetilde{\mathcal{C}}^{\text{stab}}_h(T_h, p_h, S_h)$ based on the replacement of the Darcy velocity with a known function $\boldsymbol{\eta} \in \mathbf{V}_h^{\ell-1}$. Indeed, when solving the $k\textsuperscript{th}$-step of the algorithm, we set $\boldsymbol{\eta} = -c_f \, \mathbf{K} \, \nabla_h (p_h^{k-1})$, with $p_h^{k-1}$ denoting the pressure approximation computed at the $(k-1)\textsuperscript{th}$-step. With this choice, we get better results in terms of robustness.  
		According to this linearization procedure, the trilinear form $\widetilde{\mathcal{C}}^{\text{stab}}_h(T, p, S)$ becomes bilinear and we will refer to it as:
		\begin{equation}
			\label{eq:RobQSTPE_linearized_C}
			\begin{aligned}
				\mathcal{C}^{\text{stab}}_h (T,S) = \ & \big( \boldsymbol{\eta} \cdot \nabla_h T, S \big) - \sum_{F \in \mathcal{F}_I} \int_{F} \left( \avg{\boldsymbol{\eta}} \cdot \jump{T} \right) \avg{S} \\
				& + \sum_{F \in \mathcal{F}_I} \int_F \ \frac{\left| \avg{\boldsymbol{\eta}} \cdot \mathbf{n} \right|}{2} \jump{T} \mkern-2.5mu \cdot \mkern-2.5mu \jump{S} + \sum_{F \in \mathcal{F}_B} \int_F \ \left(\boldsymbol{\eta} \cdot \mathbf{n} \right)^\ominus \, T \, S,
			\end{aligned}
		\end{equation}
		with $T, S \in V_h^{\ell}$.
		\begin{remark}
			We observe that, by setting $\boldsymbol{\eta} = -c_f \, \mathbf{K} \, \nabla_h p_h^{k-1}$, the energy conservation equation \eqref{eq:RobQSTPE_energy_cons} essentially becomes an advection-diffusion equation, where the advective velocity depends on the previously computed pressure field. 
		\end{remark}
		The linearized semi-discrete formulation becomes:
		\textit{find $(\mathbf{u}_h,p_h,T_h,\varphi_h) \in \mathbf{V}_h^{\ell} \times V_h^{\ell} \times V_h^{\ell} \times Q_h^m$ such that}
		\begin{equation}
			\label{eq:RobQSTPE_discrete_weak_form_Dtime_2}
			\begin{aligned}
				\mathcal{M}((p_h, T_h, \varphi_h),(q_h, S_h, \psi_h)) + \mathcal{A}_h^{T}(T_h,S_h) + \mathcal{C}^{\text{stab}}_h(T_h,S_h)
				+ \mathcal{A}_h^{p}(p_h,q_h) + \mathcal{A}_h^{e}(\mathbf{u}_h,\mathbf{v}_h) \\
				- \mathcal{B}_h(\varphi_h,\mathbf{v}_h) +  \mathcal{B}_h(\psi_h,\mathbf{u}_h)
				+  \mathcal{D}_h(\varphi_h,\psi_h)
				= (H,S_h) + (g,q_h) + (\mathbf{f},\mathbf{v}_h).
			\end{aligned}
		\end{equation}
		\textit{for all $(\mathbf{v}_h,q_h,S_h,\psi_h) \in \mathbf{V}_h^{\ell} \times V_h^{\ell} \times V_h^{\ell} \times Q_h^m$}. The linearization algorithm is initialized by taking $\boldsymbol{\eta}=\boldsymbol0$ at the first iteration. Then, the computed pressure is used to update $\boldsymbol{\eta}$ and the linear discrete problem \eqref{eq:RobQSTPE_discrete_weak_form_Dtime_2} is solved again until a prescribed stopping criterion is verified.
		The convergence of this algorithm is established in Section \ref{sec:RobQSTPE_conv_fixedpoint} under suitable requirements on the discrete pressure solution.

\section{Stability analysis}
\label{sec:RobQSTPE_stability}
In this section, we establish a priori estimates for the solution to the discrete problem \eqref{eq:RobQSTPE_discrete_weak_form_Dtime_2} and we investigates the conditions under which the linearization procedure designed yields convergence.

\subsection{Well-posedness of the linearized problem}\label{sec:RobQSTPE_apriori}
For carrying out the analysis of problem \eqref{eq:RobQSTPE_discrete_weak_form_Dtime_2} we need to introduce some further notation. 
The $dG$-norms that will be used in our analysis are defined such that
\begin{equation}
	\label{eq:RobQSTPE_DG_norms}
	\begin{aligned}
		&\|S\|^2_{dG,T} = \|\sqrt{\boldsymbol{\Theta}} \ \nabla_h S\|^2 +\sum_{F\in\mathcal{F}} \|\sigma^{1/2} \jump{S} \ \|_F^2 \quad &&  \forall \ S \in V_h^{\ell},\\ 
		&\|q\|^2_{dG,p} = \|\sqrt{\mathbf{K}} \ \nabla_h q\|^2 + \sum_{F\in\mathcal{F}} \|\xi^{1/2} \jump{q} \ \|_F^2 \quad && \forall \ q \in V_h^{\ell},\\ 
		&\|\mathbf{v}\|^2_{dG,e} = \|\sqrt{2 \mu} \ \boldsymbol{\epsilon}_h(\mathbf{v})\|^2 + \sum_{F\in\mathcal{F}}\|\zeta^{1/2} \jump{\mathbf{v}} \ \|_F^2 \quad && \forall \ \mathbf{v} \in \mathbf{V}_h^{\ell}.
	\end{aligned}
\end{equation}

We can now state the following results that establishes the main key properties of the bilinear forms defined in \eqref{eq:RobQSTPE_bilinear_forms_discr}. The proof follows the lines of \cite[Chapter 4]{DiPietro2012}, \cite[Section 3]{Ern2009}. 
\begin{lemma}
	\label{lem:RobQSTPE_boundcoerc_bil_forms}
	Let Assumptions~\ref{assumption:RobQSTPE_model_problem} and ~\ref{ass:RobQSTPE_mesh_Th1} be satisfied and assume that the parameters $\alpha_1$, $\alpha_2$, and $\alpha_3$ appearing in \eqref{eq:RobQSTPE_stabilization_func} are chosen large enough. Then, the following bounds hold:
	\begin{equation}
		\begin{aligned}
			\mathcal{A}_h^T(T,S) \lesssim \ & \|T\|_{dG,T} \|S\|_{dG,T}, \qquad && \mathcal{A}_h^T(T,T) \gtrsim \|T\|_{dG,T}^2 \qquad &&\forall \ T,S \in V_h^{\ell},\\
			\mathcal{A}_h^p(p,q) \lesssim \ & \|p\|_{dG,p} \|q\|_{dG,p}, \qquad && \mathcal{A}_h^p(p,p) \gtrsim \|p\|_{dG,p}^2 \qquad &&\forall \ p,q \in V_h^{\ell},\\
			\mathcal{A}_h^e(\mathbf{u},\mathbf{v}) \lesssim \ & \|\mathbf{u}\|_{dG,e} \|\mathbf{v}\|_{dG,e}, \qquad && \mathcal{A}_h^e(\mathbf{u},\mathbf{u}) \gtrsim \|\mathbf{u}\|_{dG,e}^2 \qquad &&\forall \ \mathbf{u},\mathbf{v} \in \mathbf{V}_h^{\ell},
		\end{aligned}
	\end{equation}
	where the hidden constants do not depend on the material properties and the discretization parameters.
\end{lemma}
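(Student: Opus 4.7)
The three bilinear forms share an identical WSIP structure consisting of a bulk diffusion term, a symmetric consistency part built from weighted averages, and a jump-jump penalization. My plan is therefore to treat them in parallel, working out in detail only the argument for $\mathcal{A}_h^T$ and indicating how the same template applies to $\mathcal{A}_h^p$ and $\mathcal{A}_h^e$ (with the strain tensor replacing the gradient in the elasticity case, handled via a broken Korn inequality).

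The linchpin of the argument is the algebraic identity linking the specific choices of $\omega_{\boldsymbol{\Theta}}^{\pm}$ and $\gamma_{\boldsymbol{\Theta}}$ in Section 3.2. A short computation shows that $(\omega_{\boldsymbol{\Theta}}^{\pm})^2 \delta_{\boldsymbol{\Theta}_n}^{\pm} = \omega_{\boldsymbol{\Theta}}^{\mp}\gamma_{\boldsymbol{\Theta}}$, and analogously for $\mathbf{K}$ and $\mu$. Combined with the pointwise bound $|\boldsymbol{\Theta}^{\pm}\nabla T^{\pm} \cdot \mathbf{n}|^2 \le \delta_{\boldsymbol{\Theta}_n}^{\pm}|\sqrt{\boldsymbol{\Theta}^{\pm}}\nabla T^{\pm}|^2$ (Cauchy--Schwarz in the $\boldsymbol{\Theta}^{\pm}$-inner product), this yields, on any face $F\in\mathcal{F}$,
\begin{equation*}
\bigl|\wTavg{\boldsymbol{\Theta}\nabla_h T}\cdot\mathbf{n}\bigr|^2
\;\le\; 2\gamma_{\boldsymbol{\Theta}}\bigl(\omega_{\boldsymbol{\Theta}}^{+}|\sqrt{\boldsymbol{\Theta}^{+}}\nabla T^{+}|^2 + \omega_{\boldsymbol{\Theta}}^{-}|\sqrt{\boldsymbol{\Theta}^{-}}\nabla T^{-}|^2\bigr),
\end{equation*}
so that the weighted average is controlled by the (one-sided) bulk energy \emph{without any dependence on the coefficient jump ratio}. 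Next I would invoke the trace-inverse inequality \eqref{eq:RobQSTPE_trace_inverse_ineq} to transfer this face bound to elementwise volume norms, producing a factor $\max_{\kappa\in\{\kappa^+,\kappa^-\}}(\ell^2/h_\kappa)$. Recognizing this factor as exactly $\sigma/(\alpha_1\gamma_{\boldsymbol{\Theta}})$ from the penalty definition \eqref{eq:RobQSTPE_stabilization_func}, I obtain
\begin{equation*}
\sum_{F\in\mathcal{F}}\|\sigma^{-1/2}\wTavg{\boldsymbol{\Theta}\nabla_h T}\|_F^2 \;\lesssim\; \alpha_1^{-1}\|\sqrt{\boldsymbol{\Theta}}\nabla_h T\|^2,
\end{equation*}
after summing over faces and exploiting $\omega_{\boldsymbol{\Theta}}^{+}+\omega_{\boldsymbol{\Theta}}^{-}=1$ together with Assumption \ref{ass:RobQSTPE_A3} to group contributions per element.

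With this key estimate in hand, the two bounds follow by standard manipulations. For \textbf{boundedness}, I apply Cauchy--Schwarz on each face integral in $\mathcal{A}_h^T$, writing the consistency terms as pairings of $\sigma^{-1/2}\wTavg{\boldsymbol{\Theta}\nabla_h \cdot}$ against $\sigma^{1/2}\jump{\cdot}$, then use the inequality above together with a plain Cauchy--Schwarz in the bulk term to produce $\|T\|_{dG,T}\|S\|_{dG,T}$. For \textbf{coercivity}, I set $S=T$ so the two consistency terms collapse to twice the same quantity, bound it via Young's inequality with a small parameter $\varepsilon$, and obtain
\begin{equation*}
\mathcal{A}_h^T(T,T)\;\ge\;\bigl(1 - C\varepsilon\alpha_1^{-1}\bigr)\|\sqrt{\boldsymbol{\Theta}}\nabla_h T\|^2 + \bigl(1-\varepsilon^{-1}\bigr)\sum_{F\in\mathcal{F}}\|\sigma^{1/2}\jump{T}\|_F^2;
\end{equation*}
choosing $\varepsilon>1$ and then $\alpha_1$ large enough that $C\varepsilon/\alpha_1<1$ gives the claimed coercivity with a constant independent of the material data and mesh size. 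The argument for $\mathcal{A}_h^p$ is verbatim the same, substituting $\mathbf{K}$, $\omega_{\mathbf{K}}$, $\gamma_{\mathbf{K}}$, $\xi$ for their thermal counterparts.

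The elasticity form $\mathcal{A}_h^e$ requires one extra ingredient: the tensor-valued analogue of the pointwise bound produces $2\mu^{\pm}|\boldsymbol{\epsilon}(\mathbf{u}^{\pm})|^2$ on the right-hand side, and to convert $\|\sqrt{2\mu}\boldsymbol{\epsilon}_h(\mathbf{u})\|^2$ into genuine control over the $dG$-energy I would invoke a broken Korn inequality on polytopic meshes. This step is essentially where I expect the \emph{main technical subtlety} to sit, since on polytopic grids the broken Korn estimate is not entirely routine; however, it is by now well-established in the PolyDG literature (e.g.\ \cite{DiPietro2012, Antonietti.Botti.ea:21}), so I would cite it rather than reprove it. Throughout, tracking that all constants depend only on the shape-regularity constants from Assumption \ref{ass:RobQSTPE_mesh_Th1} and $C_{\mathrm{tr}}$ from \eqref{eq:RobQSTPE_trace_inverse_ineq}, and crucially \emph{not} on $k_m, k_M, \theta_m, \theta_M, \mu_m, \mu_M$, is the point where the WSIP choice of weights pays off and distinguishes the analysis from a standard SIP proof.
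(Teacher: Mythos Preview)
Your proposal is correct and follows precisely the standard WSIP argument that the paper defers to (the paper does not give its own proof, only the sentence ``The proof follows the lines of \cite[Chapter 4]{DiPietro2012}, \cite[Section 3]{Ern2009}''); your key estimate on $\sigma^{-1/2}\wTavg{\boldsymbol{\Theta}\nabla_h T}$ via the identity $\omega_{\boldsymbol{\Theta}}^{\pm}\delta_{\boldsymbol{\Theta}_n}^{\pm}=\gamma_{\boldsymbol{\Theta}}$ is exactly the WSIP mechanism from \cite{Ern2009}. One small correction: you do \emph{not} need a broken Korn inequality for $\mathcal{A}_h^e$, since the norm $\|\cdot\|_{dG,e}$ in \eqref{eq:RobQSTPE_DG_norms} is already built from $\boldsymbol{\epsilon}_h$, so the elasticity case is structurally identical to the scalar ones (Korn enters later, in the stability estimate, not here).
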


\begin{lemma}
	\label{lem:RobQSTPE_Bh}
	Under Assumption~\ref{ass:RobQSTPE_mesh_Th1}, the following inequalities hold true with hidden constants independent of the model parameters and the mesh size $h$:
	\begin{enumerate}
		\item[(i) ] assuming that the polynomial degrees $\ell$ and $m$  satisfy $\ell+1 \geq m$ and the parameter $\alpha_4$ in \eqref{eq:RobQSTPE_stabilization_func} is large enough, there exists $\mathbb{B}>0$, possibly depending on $\ell$ and $m$, such that
		\begin{equation}
			\label{eq:RobQSTPE_gen_inf_sup}
			\underset{\mathbf{0} \neq \mathbf{v}_h \in \mathbf{V}^{\ell}_h}{\mbox{sup}} \frac{\mathcal{B}_h(\mathbf{v}_h, \varphi_h)}{\|\mathbf{v}_h\|_{DG,e}} + \mathcal{D}_h(\varphi_h,\varphi_h)^{\frac12} \geq \mathbb{B} \|\varphi_h\| \qquad \forall \varphi_h \in Q_h^m;
		\end{equation}
		\item[(ii) ] denoting by $|\cdot|_{dG}$ the broken $H(\mathrm{div})$-seminorm defined such that 
		$$
		|\mathbf{v}_h|_{dG}^2=\|\divh{\mathbf{v}_h}\|^2
		+\sum_{F\in\mathcal{F}} \underset{\kappa \in \{\kappa^+,\kappa^-\}}{\mbox{max}} \left(\ell^2 h_{\kappa}^{-1}\right)\|\jump{\mathbf{v}_h}_n\|_F^2,
		$$
		one has
		\begin{equation}
			\label{eq:RobQSTPE_bnd_Bh}
			\mathcal{B}_h(\mathbf{v}_h, \varphi_h) \lesssim
			|\mathbf{v}_h|_{dG} \|\varphi_h\| \lesssim
			\mu_m^{-\frac12}\|\mathbf{v}_h\|_{dG,e} \|\varphi_h\|
			\qquad\forall\mathbf{v}_h \in \mathbf{V}_h^{\ell}, \ \forall \varphi_h \in Q_h^m;
		\end{equation}
		\item[(iii) ] denoting by $|\cdot|_{dG,\infty}$ the broken seminorm defined such that 
		$$
		|\mathbf{v}_h|_{dG,\infty} = \|\divh{\mathbf{v}_h}\|_{L^\infty(\Omega)}
		+\max_{F\in\mathcal{F}} \underset{\kappa \in \{\kappa^+,\kappa^-\}}{\max} \left(\ell^2 h_{\kappa}^{-1}\right) \|\jump{\mathbf{v}_h}_n\|_{L^\infty(F)},
		$$
		one has
		\begin{equation}
			\label{eq:RobQSTPE_quasiskew_Ch}
			\mathcal{C}_h^{\text{stab}}(T_h, T_h) \gtrsim
			-|\boldsymbol{\eta}|_{dG,\infty} \|T_h\|^2
			\qquad\ \forall T_h \in V_h^\ell;
		\end{equation}
		\item[(iv) ] assuming $\boldsymbol{\eta}\in \mathbf{L}^\infty(\Omega)$, the bilinear form defined in \eqref{eq:RobQSTPE_linearized_C} satisfies
		\begin{equation}
			\label{eq:RobQSTPE_bnd_Ch}
			\mathcal{C}_h^{\text{stab}}(T_h, S_h) \lesssim \theta_m^{-\frac12} \|\boldsymbol{\eta}\|_{\mathbf{L}^\infty(\Omega)}
			\|T_h\|_{dG,T} \|S_h\|
			\qquad T_h, S_h \in V_h^{\ell}.
		\end{equation}
	\end{enumerate}
\end{lemma}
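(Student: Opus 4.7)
The four claims are essentially independent; the strategy is classical DG analysis adapted to the PolyDG--WSIP setting.

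For (i), I would follow the generalized inf-sup argument for DG Stokes-type problems developed in \cite{antonietti2020_stokesDG}. Given $\varphi_h\in Q_h^m$, decompose it into a regularized (continuous, mean-free) component $\varphi_h^c$ and the oscillation $\varphi_h-\varphi_h^c$. The continuous surjectivity of $\nabla\cdot : \mathbf{H}^1_0(\Omega)\to L^2_0(\Omega)$ lifts $\varphi_h^c$ to a function in $\mathbf{H}^1_0(\Omega)$, which is then mapped into $\mathbf{V}_h^{\ell}$ by a Fortin-type polynomial operator, available on polytopic meshes thanks to Assumption A.1 and the compatibility $\ell+1\ge m$. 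This produces a candidate test function $\mathbf{v}_h$ with $\mathcal{B}_h(\mathbf{v}_h,\varphi_h^c)\gtrsim\|\varphi_h^c\|^2$ and $\|\mathbf{v}_h\|_{dG,e}\lesssim\|\varphi_h^c\|$. The complementary oscillation is controlled by $\mathcal{D}_h(\varphi_h,\varphi_h)^{1/2}$ provided $\alpha_4$ is sufficiently large, since this jump penalty measures precisely the distance of $\varphi_h$ from $H^1_0(\Omega)$. Combining the two bounds delivers \eqref{eq:RobQSTPE_gen_inf_sup}.

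For (ii), I would apply Cauchy--Schwarz term by term in $\mathcal{B}_h$: the volume part yields $\|\divh{\mathbf{v}_h}\|\,\|\varphi_h\|$, and on the face contribution $\sum_F\int_F\avg{\varphi_h}\cdot\jump{\mathbf{v}_h}_n$ I insert and extract the weight $(\ell^2 h_\kappa^{-1})^{1/2}$, using the trace--inverse inequality \eqref{eq:RobQSTPE_trace_inverse_ineq} to absorb $\avg{\varphi_h}$ into $\|\varphi_h\|$. This delivers the first inequality of \eqref{eq:RobQSTPE_bnd_Bh}. The second follows from a broken Korn inequality for PolyDG spaces, giving $\|\nabla_h\mathbf{v}_h\|\lesssim\mu_m^{-1/2}\|\mathbf{v}_h\|_{dG,e}$, together with a direct comparison between the weights $\ell^2 h_\kappa^{-1}$ appearing in $|\cdot|_{dG}$ and the coercivity weight $\zeta\sim\gamma_\mu\,\ell^2 h_\kappa^{-1}$ from \eqref{eq:RobQSTPE_stabilization_func}, noting that $\mu_m\lesssim\gamma_\mu$ under Assumption 2.1.

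For (iii), I would apply Proposition~\ref{prop:RobQSTPE_trasporto_uw} in its linearized version, obtained by substituting $\boldsymbol{\eta}$ in place of $-c_f\mathbf{K}\nabla_h p$. The upwind and inflow contributions are manifestly non-negative on the diagonal, so the bound reduces to controlling $\tfrac{1}{2}\mathcal{B}_h(T_h^2,\boldsymbol{\eta})$. Expanding via the definition of $\mathcal{B}_h$, I would pull out $\|\divh{\boldsymbol{\eta}}\|_{L^\infty(\Omega)}$ on the volume integral and $\|\jump{\boldsymbol{\eta}}_n\|_{L^\infty(F)}$ on each face, then invoke \eqref{eq:RobQSTPE_trace_inverse_ineq} on $T_h^2$ (a piecewise polynomial of degree $2\ell$) together with the local mesh regularity of Assumption A.1 to reproduce the $|\boldsymbol{\eta}|_{dG,\infty}\|T_h\|^2$ bound. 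Item (iv) is then a routine boundedness argument: apply Cauchy--Schwarz to the four summands of $\mathcal{C}_h^{\text{stab}}$, factor out $\|\boldsymbol{\eta}\|_{\mathbf{L}^\infty(\Omega)}$ globally, convert the $\jump{T_h}$ (and $\nabla_h T_h$) factors into $\|T_h\|_{dG,T}$ via $\sigma^{1/2}$ and $\theta_m^{-1/2}$ respectively, and apply \eqref{eq:RobQSTPE_trace_inverse_ineq} with the scaling $\sigma\sim\theta_m\,\ell^2 h_\kappa^{-1}$ to reduce the companion traces of $S_h$ back to $\theta_m^{-1/2}\|S_h\|$.

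The main obstacle is item (i): on general polytopic meshes, the simultaneous construction of a continuous/regularized decomposition of $Q_h^m$ and of a compatible Fortin-type operator into $\mathbf{V}_h^{\ell}$, with bounds in $\|\cdot\|_{dG,e}$, is delicate, and it is precisely what forces the introduction of the stabilization $\mathcal{D}_h$ and the smallness condition on $\alpha_4$. Item (iii) is the second most technical point, since matching the polynomial-degree and mesh-size weightings in the trace--inverse inequality is required to recover exactly the $|\cdot|_{dG,\infty}$ seminorm defined in the statement; items (ii) and (iv) reduce to bookkeeping once the correct Cauchy--Schwarz splittings are identified.
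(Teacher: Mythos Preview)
Your proposal is correct and follows the paper's proof essentially step by step: (i) is referred to \cite{antonietti2020_stokesDG}; (ii) uses Cauchy--Schwarz plus the trace--inverse inequality \eqref{eq:RobQSTPE_trace_inverse_ineq}; (iii) applies Proposition~\ref{prop:RobQSTPE_trasporto_uw}, drops the non-negative upwind/inflow terms, and bounds $\tfrac12\mathcal{B}_h(T_h^2,\boldsymbol{\eta})$ via H\"older in $L^\infty$ and trace--inverse; (iv) is the H\"older/Cauchy--Schwarz argument you describe. The only point where the paper is slightly simpler is the second inequality in (ii): rather than a broken Korn inequality, it uses the pointwise bounds $\|\divh{\mathbf{v}_h}\|\le\|\boldsymbol{\epsilon}_h(\mathbf{v}_h)\|$ and $\|\jump{\mathbf{v}_h}_n\|_F\le\|\jump{\mathbf{v}_h}\|_F$ directly.
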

\begin{proof}
	\textit{(i)} For the proof of condition \eqref{eq:RobQSTPE_gen_inf_sup} we refer to \cite[Proposition 3.1]{antonietti2020_stokesDG}.
	
	\medskip\noindent
	\textit{(ii)} Estimate \eqref{eq:RobQSTPE_bnd_Bh} follows from the Cauchy--Schwarz inequality and 
	the trace inverse inequality stated in \eqref{eq:RobQSTPE_trace_inverse_ineq}, namely
	\begin{equation}\label{eq:RobQSTPE_bndB_det}
		\begin{aligned}
			\mathcal{B}_h(\mathbf{v}_h, \varphi_h) 
			&\le \|\divh{\mathbf{v}_h}\|\|\varphi_h\| + \sum_{\kappa\in\mathcal{T}_h} \|\jump{\mathbf{v}_h}_n\|_{\partial\kappa} \ \|\varphi_h\|_{\partial\kappa} \\
			&\lesssim \|\divh{\mathbf{v}_h}\|\|\varphi_h\| 
			+ \left(\sum_{\kappa\in\mathcal{T}_h} \ell^2 h_{\kappa}^{-1} \|\jump{\mathbf{v}_h}_n\|_{\partial\kappa}^2\right)^{\frac12} \ \left(\sum_{\kappa\in\mathcal{T}_h} \ell^{-2} h_{\kappa} \|\varphi_h\|_{\partial\kappa}^2\right)^\frac12 \\
			&\lesssim \|\divh{\mathbf{v}_h}\|\|\varphi_h\|
			+ \left(\sum_{F\in\mathcal{F}} \ \sum_{\kappa\in \{\kappa^+,\kappa^-\}} \ell^2 h_{\kappa}^{-1} \|\jump{\mathbf{v}_h}_n\|_F^2\right)^\frac12 \|\varphi_h\| \\
			&\lesssim \left(\|\divh{\mathbf{v}_h}\|^2 + \sum_{F\in\mathcal{F}} \underset{\kappa \in \{\kappa^+,\kappa^-\}}{\mbox{max}} \left(\ell^2 h_{\kappa}^{-1}\right)\|\jump{\mathbf{v}_h}_n\|_F^2\right)^\frac12 \|\varphi_h\| = |\mathbf{v}_h|_{dG} \|\varphi_h\|.
		\end{aligned}
	\end{equation}
	The second inequality in \eqref{eq:RobQSTPE_bnd_Bh} directly follows by observing that $\|\divh{\mathbf{v}_h}\| \le \|  \boldsymbol{\epsilon}_h(\mathbf{v})\|$ and,  for all $F\in\mathcal{F}$, $\|\jump{\mathbf{v}_h}_n\|_F \le \|\jump{\mathbf{v}_h}\|_F$.
	
	\medskip\noindent
	\textit{(iii)} We proceed with the proof of \eqref{eq:RobQSTPE_quasiskew_Ch}. Owing to Proposition \ref{prop:RobQSTPE_trasporto_uw}, we can write 
	$$
	\begin{aligned}
		\mathcal{C}^{\text{stab}}_h (T_h,T_h) & = 
		\frac{\mathcal{B}_h(T_h^2, \boldsymbol{\eta})}{2} + \sum_{F \in \mathcal{F}_I} \int_F \frac{\left| \avg{\boldsymbol{\eta}} \cdot \mathbf{n} \right|}{2} \jump{T_h}^2 
		+ \sum_{F\in\mathcal{F}_B} \int_F (\boldsymbol{\eta} \cdot \mathbf{n})^{\ominus} \, T_h^2 \\
		& \ge \frac12\left(
		\big(-\divh{\boldsymbol{\eta}}, T_h^2 \big) + 
		\sum_{F\in\mathcal{F}}\int_F \jump{\boldsymbol{\eta}}_n \avg{T_h^2}\right).
	\end{aligned}
	$$
	Thus, applying the H\"older inequality, recalling the definition of the $|\cdot|_{dG,\infty}$ seminorm, and using again the discrete trace inverse inequality, we have
	$$
	\begin{aligned}
		\mathcal{C}^{\text{stab}}_h (T_h,T_h) & \ge 
		\frac{-\|\divh{\boldsymbol{\eta}}\|_{L^\infty(\Omega)}}2 \|T_h\|^2 -\frac12\left(\max_{F\in\mathcal{F}} 
		\underset{\kappa \in \{\kappa^+,\kappa^-\}}{\max} \frac{\ell^2}{h_{\kappa}} \|\jump{\mathbf{v}_h}_n\|_{L^\infty(F)}
		\hspace{-0.5mm}\right)\hspace{-0.5mm}
		\sum_{F\in\mathcal{F}} \int_F \min_{\kappa=\kappa^\pm} \frac{h_{\kappa}}{\ell^2} \avg{T_h^2} \\
		& \ge \frac{-|\boldsymbol{\eta}|_{dG,\infty}}2
		\left(\|T_h\|^2 +\sum_{F\in\mathcal{F}} \sum_{\kappa \in \{\kappa^+,\kappa^-\}} \int_F \frac{h_{\kappa}}{\ell^2} \ T_h^2\right) \\
		& \ge \frac{-|\boldsymbol{\eta}|_{dG,\infty}}2
		\left(\|T_h\|^2 +\sum_{\kappa\in\mathcal{T}_h} \frac{h_{\kappa}}{\ell^2} \|T_h\|^2_{\partial\kappa}\right)
		\ge -(C_{\rm tr} +1)|\boldsymbol{\eta}|_{dG,\infty} \|T_h\|^2.
	\end{aligned}
	$$
	
	\medskip\noindent
	\textit{(iv)}
	The boundedness of $\mathcal{C}_h^{\text{stab}}$ in \eqref{eq:RobQSTPE_bnd_Ch} is obtained in a similar way. Applying the H\"older and triangle inequality we obtain
	$$
	\begin{aligned}
		\mathcal{C}^{\text{stab}}_h (T_h,S_h) &\le 
		\|\boldsymbol{\eta}\|_{\mathbf{L}^\infty(\Omega)}  \|\nabla_h T_h\| \|S_h\|  
		+ \|\boldsymbol{\eta}\|_{\mathbf{L}^\infty(\Omega)}  
		\left(\sum_{F \in \mathcal{F}} \| \jump{T_h}\|_F  \Big(\|\avg{S_h}\|_F+\frac12\|\jump{S_h}\|_F\Big)\right)  \\
		&\le \|\boldsymbol{\eta}\|_{\mathbf{L}^\infty(\Omega)}  \theta_m^{-\frac12}\|\sqrt{\boldsymbol{\Theta}}\nabla_h T_h\| \|S_h\|  
		+ \|\boldsymbol{\eta}\|_{\mathbf{L}^\infty(\Omega)}  
		\left(\sum_{F \in \mathcal{F}}\ \sum_{\kappa\in \{\kappa^+,\kappa^-\}} \| \jump{T_h}\|_F  \|S_h\|_F \right).
	\end{aligned}
	$$
	Then, proceeding as in \eqref{eq:RobQSTPE_bndB_det} and recalling the definition of the stabilization function $\sigma$ in \eqref{eq:RobQSTPE_stabilization_func}, it is inferred that
	$$
	\mathcal{C}^{\text{stab}}_h (T_h,S_h) \lesssim 
	\theta_m^{-\frac12} \|\boldsymbol{\eta}\|_{\mathbf{L}^\infty(\Omega)}\|S_h\|   \left(\|\sqrt{\boldsymbol{\Theta}}\nabla_h T_h\|^2 + 
	\sum_{F \in \mathcal{F}}\|\sigma^\frac12 \jump{T_h}\|_F ^2
	\right)^\frac12 \lesssim \theta_m^{-\frac12} \|\boldsymbol{\eta}\|_{\mathbf{L}^\infty(\Omega)}
	\|T_h\|_{dG,T} \ \|S_h\|.
	$$
\end{proof}
\begin{remark}
	The boundedness of the transport velocity in the $|\boldsymbol{\eta}|_{dG,\infty}<\infty$ holds whenever $\boldsymbol{\eta}\in\mathbf{V}_h^\ell$ is the approximation an $H(div)$-regular vector fields such that its divergence belongs to $L^\infty(\Omega)$. In the case of slightly compressible fluids, the divergence of the velocity field is usually close to zero.
\end{remark}
The next Lemma is instrumental for the derivation of an a priori stability estimate, that is robust with respect to limit cases of the model parameters.
\begin{lemma}
	\label{lem:RobQSTPE_Mh}
	Let Assumptions~\ref{assumption:RobQSTPE_model_problem} and ~\ref{ass:RobQSTPE_mesh_Th1} be satisfied and assume that $\ell+1 \geq m$ and $\alpha_4$ in \eqref{eq:RobQSTPE_stabilization_func} is large enough. Additionally, assume that at least two of the following four requirements are verified:
	$$
	\text(i)\; b_0 \ge b_m > 0 ,\quad
	\text(ii)\; a_0 -b_0 \ge a_m >0 ,\quad
	\text(iii)\; c_0 -b_0 \ge c_m > 0  ,\quad
	\text(iv)\; \lambda <\lambda_{M} <\infty.
	$$
	Then, there exist strictly positive constants $a_1,b_1$, and $c_1$ such that
	\begin{equation} \label{eq:RobQSTPE_posM}
		a_1 \|T_h\|^2 + b_1 \|\varphi_h\|^2 + c_1 \|p_h\|^2 
		\lesssim \mathcal{M}\big((p_h,T_h,\varphi_h),(p_h,T_h,\varphi_h)\big) + \mathcal{D}_h(\varphi_h,\varphi_h) + ||\mathbf{u}_h||_{DG,e}^2 + \|\mathbf{f}\|^2.
	\end{equation}
\end{lemma}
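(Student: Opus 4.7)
The plan is to proceed in three stages: first, read off the three quadratic quantities that testing $\mathcal{M}$ against itself naturally controls; second, establish a bound on $\|\varphi_h\|^2$ that holds unconditionally by exploiting the inf--sup condition together with the momentum balance; and third, do a case analysis on the two assumed coefficient regimes to recover $\|T_h\|^2$ and $\|p_h\|^2$. Testing $\mathcal{M}$ with the same arguments yields
$$
\mathcal{M}((p_h,T_h,\varphi_h),(p_h,T_h,\varphi_h)) = \|b_0^{1/2}(p_h-T_h)\|^2 + \|(a_0-b_0)^{1/2}T_h\|^2 + \|(c_0-b_0)^{1/2}p_h\|^2 + \|\lambda^{-1/2}(\varphi_h+\alpha p_h+\beta T_h)\|^2,
$$
so hypotheses (i), (ii), (iii) make the first three terms quantitatively useful, while (iv) turns the fourth term into control of the combination $\varphi_h+\alpha p_h+\beta T_h$.

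For the $\|\varphi_h\|^2$ bound I would choose the test function $(\mathbf{v}_h,0,0,0)$ in \eqref{eq:RobQSTPE_discrete_weak_form_Dtime_2} to obtain $\mathcal{B}_h(\varphi_h,\mathbf{v}_h)=\mathcal{A}_h^e(\mathbf{u}_h,\mathbf{v}_h)-(\mathbf{f},\mathbf{v}_h)$, and then apply the boundedness of $\mathcal{A}_h^e$ from Lemma~\ref{lem:RobQSTPE_boundcoerc_bil_forms} together with a broken Poincaré--Korn inequality (controlling $\|\mathbf{v}_h\|$ by $\mu_m^{-1/2}\|\mathbf{v}_h\|_{dG,e}$). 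This gives
$$
\sup_{\mathbf{0}\neq\mathbf{v}_h\in\mathbf{V}_h^\ell}\frac{\mathcal{B}_h(\varphi_h,\mathbf{v}_h)}{\|\mathbf{v}_h\|_{dG,e}} \lesssim \|\mathbf{u}_h\|_{dG,e}+\|\mathbf{f}\|,
$$
and the inf--sup condition \eqref{eq:RobQSTPE_gen_inf_sup} of Lemma~\ref{lem:RobQSTPE_Bh}(i) then yields $\|\varphi_h\|^2\lesssim\|\mathbf{u}_h\|_{dG,e}^2+\|\mathbf{f}\|^2+\mathcal{D}_h(\varphi_h,\varphi_h)$, which already supplies the $b_1\|\varphi_h\|^2$ contribution to \eqref{eq:RobQSTPE_posM} irrespective of the two assumed conditions.

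Next I would split into the six pairings of hypotheses. When (ii) and (iii) are both assumed the bounds on $\|T_h\|^2$ and $\|p_h\|^2$ are immediate. When (i) is paired with (ii) or (iii), I recover the missing norm from the triangle inequality $\|p_h\|\le\|T_h\|+\|p_h-T_h\|$ (or the analogous one for $T_h$). The three cases involving (iv) are more delicate: (iv) controls $\|\varphi_h+\alpha p_h+\beta T_h\|$, which combined with the already-established bound on $\|\varphi_h\|$ yields control of $\|\alpha p_h+\beta T_h\|$; if paired with (i), the vectors $(1,-1)$ and $(\alpha,\beta)$ are linearly independent because $\alpha,\beta>0$ (with $\alpha\ge\phi$), so both $\|p_h\|$ and $\|T_h\|$ can be extracted; if paired with (ii) or (iii), the triangle inequality suffices, using $\alpha\ge\phi$ and $\beta\le\beta_M$ to invert the linear combination.

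Assembling the six sub-cases produces strictly positive constants $a_1,b_1,c_1$ depending only on $a_m,b_m,c_m,\lambda_M,\mu_m,\phi,\beta_M$ but not on the discretization parameters nor on the (possibly degenerating) coefficients $\boldsymbol{\Theta}$, $\mathbf{K}$, $\lambda$. I expect the main obstacle to be the careful bookkeeping of constants through the case analysis so that the final bound is genuinely robust in each of the six regimes; a secondary technical ingredient is the broken Korn--Poincaré inequality needed to pass from $\|\mathbf{v}_h\|$ to $\|\mathbf{v}_h\|_{dG,e}$ with a constant independent of $h$ and $\ell$, which is standard but not recalled in the excerpt.
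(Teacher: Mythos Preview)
Your proposal is correct and follows essentially the same strategy as the paper's proof: both first obtain the unconditional bound on $\|\varphi_h\|$ by testing the momentum equation and invoking the inf--sup condition \eqref{eq:RobQSTPE_gen_inf_sup} together with the discrete Poincar\'e--Korn inequality, and both then recover $\|T_h\|$ and $\|p_h\|$ from the four quadratic terms in $\mathcal{M}$ plus the $\|\varphi_h\|$ control. The only presentational difference is that the paper compresses your six-case analysis into a single linear-algebra statement---observing that $T_h$ and $p_h$ can be written as linear combinations of the five building blocks $(a_0-b_0)^{1/2}T_h$, $b_0^{1/2}(p_h-T_h)$, $(c_0-b_0)^{1/2}p_h$, $\lambda^{-1/2}(\varphi_h+\alpha p_h+\beta T_h)$, and $\mathbb{B}\varphi_h$ whenever at least two of (i)--(iv) hold---and then applies the triangle inequality; your explicit case-by-case treatment is simply the unpacked version of this same argument.
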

\begin{proof}
	First, we derive a bound for the total pressure variable $\varphi$ using the inf-sup property of $\mathcal{B}_h$.
	Taking $(\mathbf{v_h}, 0, 0, 0)$ as test function in the linearized discrete problem \eqref{eq:RobQSTPE_discrete_weak_form_Dtime_2}, one has
	$$
	\mathcal{B}_h(\varphi_h, \mathbf{v}_h) = 
	\mathcal{A}_h^e(\mathbf{u}_h, \mathbf{v}_h) - (\mathbf{f}, \mathbf{v}_h).
	$$
	Plugging the previous identity into \eqref{eq:RobQSTPE_gen_inf_sup} and using Lemma \eqref{lem:RobQSTPE_boundcoerc_bil_forms} followed by a discrete Poincaré--Korn inequality \cite{Botti2020_korn, Brenner2004}, we infer
	\begin{equation}
		\label{eq:RobQSTPE_bndphi_robust}
		\mathbb{B}^2\|\varphi_h\|^2 \lesssim 
		\mathcal{D}_h(\varphi_h, \varphi_h) + \left(
		\underset{\mathbf{0} \neq \mathbf{v}_h \in \mathbf{V}^{\ell}_h}{\mbox{sup}} \frac{\mathcal{A}_h^e(\mathbf{u}_h, \mathbf{v}_h) - (\mathbf{f}, \mathbf{v}_h)}{||\mathbf{v}_h||_{DG,e}} \right)^2 \lesssim
		\mathcal{D}_h(\varphi_h, \varphi_h) + ||\mathbf{u}_h||_{DG,e}^2 + \mu_m^{-1}||\mathbf{f}||^2.
	\end{equation}
	Now, we observe that we can express both the temperature and the pore pressure discrete fields as a linear combination of the terms in $\mathcal{M}\big((p_h,T_h,\varphi_h),(p_h,T_h,\varphi_h)\big) + \mathbb{B}^2\|\varphi_h\|$, namely
	$$
	\begin{aligned}
		T_h &= \gamma_1\big[(a_0-b_0)^\frac12 T_h\big] +\gamma_2 \big[b_0^\frac12 (p_h - T_h)\big] +\gamma_3 \big[(c_0-b_0)^\frac12 p_h\big] + \gamma_4
		\big[\lambda^{-\frac12}(\varphi_h+\alpha p_h+\beta T_h)\big]+\gamma_5 \big[\mathbb{B}\varphi_h\big],
		\\
		p_h &= \delta_1\big[(a_0-b_0)^\frac12 T_h\big] + \delta_2\big[b_0^\frac12 (p_h - T_h)\big] +\delta_3 \big[(c_0-b_0)^\frac12 p_h\big] +\delta_4 
		\big[\lambda^{-\frac12}(\varphi_h+\alpha p_h+\beta T_h)\big]+\delta_5\big[\mathbb{B}\varphi_h\big],
	\end{aligned}
	$$
	Therefore, the bound in \eqref{eq:RobQSTPE_posM} follows from the triangle inequality and \eqref{eq:RobQSTPE_bndphi_robust}.
\end{proof}

We are now ready to establish the stability estimates for the linearized thermo-hydro-mechanics problem. 
In particular, we aim at deriving a priori bounds which are independent of the thermal conductivity $\boldsymbol{\Theta}$ and hydraulic mobility $mathbf{K}$. Instead, for obtaining stability bounds independent of the dilatation parameter $\lambda$ and the coefficients $a_0, b_0$, and $c_0$ (but depending on $\boldsymbol{\Theta}$ and $\mathbf{K}$), we can follow the approach of \cite[Section 4]{AntoniettiBonetti2022}. Indeed, the next Theorem can be seen as complementary to \cite[Theorem 4.5]{AntoniettiBonetti2022} to infer the robustness of the scheme with respect to all possible physical regimes, including quasi-incompressible media, vanishing storage coefficients, and low-permeable media.
\begin{theorem} \label{thm:RobQSTPE_stab_est}
	Let the assumptions of Lemmata \ref{lem:RobQSTPE_boundcoerc_bil_forms}, \ref{lem:RobQSTPE_Bh}, and \ref{lem:RobQSTPE_Mh} be satisfied. Let the transport velocity $\boldsymbol{\eta}\in\mathbf{V}^\ell_h$ be such that
	\begin{equation}\label{eq:RobQSTPE_stab_condition1}
		|\boldsymbol{\eta} |_{dG,\infty} \lesssim a_1,
	\end{equation}
	with $a_1>0$ defined in Lemma \ref{lem:RobQSTPE_Mh} and hidden constant independent of $\mathbf{K}$ and $\boldsymbol{\Theta}$.
	Then, the solution $(\mathbf{u}_h,p_h,T_h,\varphi_h) \in \mathbf{V}_h^{\ell} \times V_h^{\ell} \times V_h^{\ell} \times Q_h^m$ to problem \eqref{eq:RobQSTPE_discrete_weak_form_Dtime_2} satisfies the a-priori bound
	\begin{equation}\label{eq:RobQSTPE_a-priori}
		a_1 \|T_h\|^2 + b_1 \|\varphi_h\|^2 + c_1 \|p_h\|^2 + \|\mathbf{u}_h\|_{dG,e}^2 + \|T_h \|_{dG,T}^2 + \|p_h \|_{dG,p}^2
		\lesssim \| H\|^2 + \| g\|^2 + \| \mathbf{f}\|^2,
	\end{equation}
	where the hidden constant is independent of the conductivity tensors $\boldsymbol{\Theta}$, $\mathbf{K}$, and the mesh size $h$.
\end{theorem}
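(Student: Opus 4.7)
The plan is to test the linearized problem \eqref{eq:RobQSTPE_discrete_weak_form_Dtime_2} with the solution itself, i.e. take $(\mathbf{v}_h, q_h, S_h, \psi_h) = (\mathbf{u}_h, p_h, T_h, \varphi_h)$, so that the two mixed coupling contributions $-\mathcal{B}_h(\varphi_h,\mathbf{u}_h)$ and $+\mathcal{B}_h(\varphi_h,\mathbf{u}_h)$ cancel exactly. Applying the coercivity estimates of Lemma~\ref{lem:RobQSTPE_boundcoerc_bil_forms} then delivers on the left-hand side the three $dG$-norm terms $\|T_h\|_{dG,T}^2$, $\|p_h\|_{dG,p}^2$, $\|\mathbf{u}_h\|_{dG,e}^2$, together with the non-negative contributions $\mathcal{M}((p_h,T_h,\varphi_h),(p_h,T_h,\varphi_h))$ and $\mathcal{D}_h(\varphi_h,\varphi_h)$.

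The linearized convective form $\mathcal{C}_h^{\text{stab}}(T_h,T_h)$ is handled via the quasi-skew lower bound \eqref{eq:RobQSTPE_quasiskew_Ch} of Lemma~\ref{lem:RobQSTPE_Bh}(iii), which produces a negative but $\boldsymbol{\Theta}$- and $\mathbf{K}$-independent term of the form $-C_1 |\boldsymbol{\eta}|_{dG,\infty}\|T_h\|^2$. The source terms on the right are bounded by Cauchy--Schwarz together with Young's inequality applied to $(H, T_h)$, $(g, p_h)$, and $(\mathbf{f}, \mathbf{u}_h)$; the last product is controlled via a discrete Poincaré--Korn inequality that introduces only the fixed shear modulus $\mu_m$ from Assumption~\ref{assumption:RobQSTPE_model_problem}. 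The small portions $\varepsilon \|T_h\|^2$, $\varepsilon \|p_h\|^2$, $\varepsilon \|\mathbf{u}_h\|_{dG,e}^2$ produced by Young's inequality can then be absorbed for $\varepsilon$ sufficiently small.

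At this stage the non-negative combination $\mathcal{M} + \mathcal{D}_h$ controls the $L^2$ norms of the primal variables only indirectly, so I would invoke Lemma~\ref{lem:RobQSTPE_Mh} and add a suitable small multiple of the inequality $a_1\|T_h\|^2+b_1\|\varphi_h\|^2+c_1\|p_h\|^2 \lesssim \mathcal{M}(\cdot,\cdot)+\mathcal{D}_h(\varphi_h,\varphi_h)+\|\mathbf{u}_h\|_{dG,e}^2+\|\mathbf{f}\|^2$ to the previous bound. This is the step that produces the $L^2$-control on $T_h$, $\varphi_h$ and $p_h$ on the left-hand side without any hidden dependence on the degenerating coefficients $\boldsymbol{\Theta}$ and $\mathbf{K}$, since Lemma~\ref{lem:RobQSTPE_Mh} is derived from the inf-sup property of $\mathcal{B}_h$ and from purely algebraic manipulations on $\mathcal{M}$.

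The main obstacle — and the reason for the hypothesis \eqref{eq:RobQSTPE_stab_condition1} — is the final absorption of the negative advective term $-C_1 |\boldsymbol{\eta}|_{dG,\infty}\|T_h\|^2$ into the positive coefficient $a_1\|T_h\|^2$. Since $C_1$ depends only on $C_{\mathrm{tr}}$ and on the domain, not on the conductivity tensors, the hypothesis $|\boldsymbol{\eta}|_{dG,\infty} \lesssim a_1$ can be read quantitatively as $|\boldsymbol{\eta}|_{dG,\infty}\le a_1/(2C_1)$, after which the coefficient multiplying $\|T_h\|^2$ on the left-hand side remains bounded away from zero. The desired a-priori bound \eqref{eq:RobQSTPE_a-priori} then follows by collecting terms, with the hidden constant depending only on the fixed material parameters $\mu_m$, $a_m,b_m,c_m,\lambda_M$ (as enumerated in Lemma~\ref{lem:RobQSTPE_Mh}) and on the polynomial-degree-dependent trace constant, and is therefore robust with respect to $\boldsymbol{\Theta}$, $\mathbf{K}$, and $h$.
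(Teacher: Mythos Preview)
Your proposal is correct and follows essentially the same approach as the paper: test the linearized problem with the solution, use the coercivity of the $\mathcal{A}_h$-forms (Lemma~\ref{lem:RobQSTPE_boundcoerc_bil_forms}), invoke Lemma~\ref{lem:RobQSTPE_Mh} to extract the $L^2$ control from $\mathcal{M}+\mathcal{D}_h+\|\mathbf{u}_h\|_{dG,e}^2+\|\mathbf{f}\|^2$, apply Lemma~\ref{lem:RobQSTPE_Bh}(iii) together with hypothesis~\eqref{eq:RobQSTPE_stab_condition1} to absorb the convective contribution into $a_1\|T_h\|^2$, and handle the sources by Cauchy--Schwarz, Young, and the discrete Poincar\'e--Korn inequality. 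The only cosmetic difference is the order of presentation: the paper first applies Lemma~\ref{lem:RobQSTPE_Mh} and then the coercivity in a single descending chain of inequalities, whereas you obtain the energy identity first and then ``add a small multiple'' of Lemma~\ref{lem:RobQSTPE_Mh}; the substance and the tracking of constants (independent of $\boldsymbol{\Theta}$, $\mathbf{K}$, $h$) are identical.
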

\begin{remark}
	We point out that \eqref{eq:RobQSTPE_stab_condition1} is not too restrictive in the case of the numerical solution of quasi-static thermo-hydro-mechanical problems, which is our main focus. As mentioned in Section \ref{sec:RobQSTPE_model_problem}, the discrete problem \eqref{eq:RobQSTPE_discrete_weak_form_Dtime_2} corresponds to one step of an implicit time advancing scheme in which $\mathbf{K} = \delta t \tilde{\mathbf{K}}$. Thus, when the transport velocity is set as $\boldsymbol{\eta} = -c_f \mathbf{K}\nabla_h(p_h^{k-1})$, condition \eqref{eq:RobQSTPE_stab_condition1} becomes
	$$
	\delta t \lesssim a_1 \big|-c_f \tilde{\mathbf{K}}\nabla_h(p_h^{m-1})\big|_{dG,\infty}^{-1},
	$$
	meaning that it is sufficient to select the time-step $\delta t$ small enough.
\end{remark}
\begin{proof}
	Taking $(\mathbf{v}_h, q_h,S_h,\psi_h) =(\mathbf{u}_h,p_h,T_h,\varphi_h)$ in \eqref{eq:RobQSTPE_discrete_weak_form_Dtime_2} and using Lemma \ref{lem:RobQSTPE_boundcoerc_bil_forms} and Lemma \ref{lem:RobQSTPE_Mh}, we obtain
	\begin{equation}
		\label{eq:RobQSTPE_energy_balance}
		\begin{aligned}
			a_1 &\|T_h\|^2 + b_1 \|\varphi_h\|^2 + c_1 \|p_h\|^2 + \|\mathbf{u}_h\|_{dG,e}^2 + \|T_h \|_{dG,T}^2 + \|p_h \|_{dG,p}^2  
			\\
			&\lesssim\mathcal{M}((p_h, T_h, \varphi_h),(p_h, T_h, \varphi_h))+  \mathcal{D}_h(\varphi_h,\varphi_h) + \|\mathbf{u}_h\|_{dG,e}^2 + \|T_h \|_{dG,T}^2 + \|p_h \|_{dG,p}^2 + \|\mathbf{f}\|^2
			\\
			&\lesssim\mathcal{M}((p_h, T_h, \varphi_h),(p_h, T_h, \varphi_h))+  \mathcal{D}_h(\varphi_h,\varphi_h) + \mathcal{A}_h^{T}(T_h,T_h) + \mathcal{A}_h^{p}(p_h,p_h) + \mathcal{A}_h^{e}(\mathbf{u}_h,\mathbf{u}_h) + \|\mathbf{f}\|^2
			\\
			& = (H,T_h) + (g,p_h) + (\mathbf{f},\mathbf{u}_h) + \|\mathbf{f}\|^2 - \mathcal{C}^{\text{stab}}_h(T_h,T_h).
		\end{aligned}
	\end{equation}
	We now proceed to bound the right-hand side of the previous inequality. Using the Cauchy--Schwarz and Young inequalities and applying the discrete Poincaré--Korn inequality \cite{Botti2020_korn} to bound the $L^2$-norm of the displacement, it is inferred that
	$$
	(H,T_h) + (g,p_h) + (\mathbf{f},\mathbf{u}_h) \le 
	\frac{a_1}{4\varepsilon}\| T_h\|^2 + \frac\varepsilon{a_1} \| H\|^2 + 
	\frac{c_1}{2\varepsilon}\| p_h\|^2 + \frac\varepsilon{2c_1} \| g\|^2
	+ \frac1{2\varepsilon}\|\mathbf{u}_h\|_{dG,e}^2 + \frac{C_{\rm K}\varepsilon}{2\mu_m} \| \mathbf{f}\|^2,
	$$
	with $C_{\rm K}>0$ independent of the material properties and the discretization parameters.
	Moreover, owing to point \textit{(iii)} in Lemma \ref{lem:RobQSTPE_Bh} and recalling assumption \eqref{eq:RobQSTPE_stab_condition1}, we obtain
	$$
	-\mathcal{C}^{\text{stab}}_h(T_h,T_h) \lesssim 
	|\boldsymbol{\eta}|_{dG,\infty} \|T_h\|^2
	\lesssim \frac{a_1}{4}\|T_h\|^2.
	$$
	Plugging the two previous bounds in \eqref{eq:RobQSTPE_energy_balance} and fixing $\varepsilon>0$ equal to the product of the two hidden constants that realizes the inequalities in \eqref{eq:RobQSTPE_energy_balance}, we are led to the conclusion.
\end{proof}

\subsection{Convergence of the fixed point algorithm}
\label{sec:RobQSTPE_conv_fixedpoint}

In order to prove the convergence of the linearization procedure, we show that the difference of the approximate solutions at two successive iterations defines a contracting sequence. 

Let $(\mathbf{u}_h^{k+1},p_h^{k+1},T_h^{k+1},\varphi_h^{k+1})$ and $(\mathbf{u}_h^{k},p_h^{k},T_h^{k},\varphi_h^{k})$ be the solutions to \eqref{eq:RobQSTPE_discrete_weak_form_Dtime_2} at the $(k+1)^{\mathrm{th}}$ and $k^{\mathrm{th}}$ iterations, respectively. The transport velocity at the $(k+1)^{\mathrm{th}}$ step is given by  $\boldsymbol{\eta}= -c_f\mathbf{K}\nabla_h(p_h^{k})$.
For all $k\ge 1$, we define
$$
\mathbf{e}_{\mathbf{u}}^k = \mathbf{u}_h^{k+1}-\mathbf{u}_h^k, \qquad
e_p^k = p_h^{k+1} - p_h^{k}, \qquad
e_T^k = T_h^{k+1} - T_h^{k}, \qquad
e_\varphi^k = \varphi_h^{k+1} - \varphi_h^{k}.
$$
Then, it can be observed that the differences $(\mathbf{e}_{\mathbf{u}}^k, e_p^k, e_T^k, e_\varphi^k)$ solve the problem 
\begin{equation}
	\label{eq:RobQSTPE_errors_equations}
	\begin{aligned}
		& \mathcal{M}((e_p^k, e_T^k, e_\varphi^k),(q_h, S_h, \psi_h)) + \mathcal{A}_h^{T}(e_T^k,S_h) + \widetilde{\mathcal{C}}^{\text{stab}}_h(e_T^k,p_h^k,S_h)
		+ \mathcal{A}_h^{p}(e_p^k,q_h) + \mathcal{A}_h^{e}(\mathbf{e}_{\mathbf{u}}^k,\mathbf{v}_h) \\
		&\quad - \mathcal{B}_h(e_\varphi^k,\mathbf{v}_h) +  \mathcal{B}_h(\psi_h,\mathbf{e}_{\mathbf{u}}^k)
		+  \mathcal{D}_h(e_\varphi^k,\psi_h)
		= -\widetilde{\mathcal{C}}^{\text{stab}}_h(T_h^{k},e_p^{k-1},S_h),
	\end{aligned}
\end{equation}
with trilinear form $\widetilde{\mathcal{C}}^{\text{stab}}_h$ defined in \eqref{eq:RobQSTPE_bilinear_forms_discr}. In the following theorem we investigate the conditions under which the fixed-point iterative method converges. To do so, we need to introduce the $\|\cdot\|_{dG,\infty}$-norm of functions belonging to $V_h^\ell$. Similarly to $|\cdot|_{dG,\infty}$, the definition of $\|\cdot\|_{dG,\infty}$ is given by
$$
\|S_h\|_{dG,\infty} = \|\nabla_h{S_h}\|_{\mathbf{L}^\infty(\Omega)}
+\max_{F\in\mathcal{F}} \underset{\kappa \in \{\kappa^+,\kappa^-\}}{\max} 
\left(\ell^2 h_{\kappa}^{-1}\right) \|\jump{S_h}\|_{\mathbf{L}^\infty(F)},
\qquad\forall\ S_h\in V_h^\ell.
$$
\begin{theorem}
	Let the assumptions of Theorem \ref{thm:RobQSTPE_stab_est} hold. Additionally, assume that 
	\begin{equation}\label{eq:RobQSTPE_stab_condition2}
		\|T_h^k\|_{dG,\infty} \lesssim \sqrt{a_1} c_{fM}^{-1} \qquad \forall k\ge 1,
	\end{equation}
	with $a_1>0$ defined in Lemma \ref{lem:RobQSTPE_Mh} and hidden constant independent of $\mathbf{K}$ and $\boldsymbol{\Theta}$. Then, the linearization strategy defined in Section \ref{sec:RobQSTPE_linearization} converges, namely
	$$
	\mathbf{V}_h^\ell\times V_h^\ell\times V_h^\ell\times Q_h^m \ni
	(\mathbf{e}_{\mathbf{u}}^k,e_p^k,e_T^k,e_\varphi^k)\to\mathbf{0}\;\text{ as }\;k\to\infty.
	$$
\end{theorem}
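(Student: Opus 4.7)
The strategy is to test the error equation \eqref{eq:RobQSTPE_errors_equations} with $(\mathbf{v}_h,q_h,S_h,\psi_h)=(\mathbf{e}_{\mathbf u}^k,e_p^k,e_T^k,e_\varphi^k)$ and replay the energy argument of Theorem~\ref{thm:RobQSTPE_stab_est}, reading the outcome as a contraction on the $dG$-norm of $e_p$. Coercivity of $\mathcal{A}_h^T$, $\mathcal{A}_h^p$, $\mathcal{A}_h^e$ (Lemma~\ref{lem:RobQSTPE_boundcoerc_bil_forms}), the cancellation of the two $\mathcal{B}_h$-couplings, positivity of $\mathcal{M}$ combined with the inf--sup bound provided by Lemma~\ref{lem:RobQSTPE_Mh}, and the quasi-skew-symmetry bound \eqref{eq:RobQSTPE_quasiskew_Ch} applied to $\boldsymbol{\eta}=-c_f\mathbf{K}\nabla_h p_h^k$ (whose $|\cdot|_{dG,\infty}$-seminorm is controlled by $a_1$ thanks to \eqref{eq:RobQSTPE_stab_condition1}, assumed to hold uniformly along the iteration) yield
\begin{equation*}
a_1\|e_T^k\|^2+c_1\|e_p^k\|^2+b_1\|e_\varphi^k\|^2+\|\mathbf{e}_{\mathbf u}^k\|_{dG,e}^2+\|e_T^k\|_{dG,T}^2+\|e_p^k\|_{dG,p}^2 \lesssim \bigl|\widetilde{\mathcal{C}}^{\text{stab}}_h(T_h^k,e_p^{k-1},e_T^k)\bigr|.
\end{equation*}

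The critical step is the bound on the right-hand side. I would proceed term-by-term through the definition of $\widetilde{\mathcal{C}}^{\text{stab}}_h$, using H\"older's inequality so as to factor out $c_{fM}\|T_h^k\|_{dG,\infty}$, which simultaneously dominates $c_{fM}\|\nabla_h T_h^k\|_{\mathbf{L}^\infty(\Omega)}$ and the scaled $L^\infty$-jumps of $T_h^k$ on the faces appearing in the interior-average and stabilization pieces. The remaining factors pair $\|\sqrt{\mathbf{K}}\nabla_h e_p^{k-1}\|\le \|e_p^{k-1}\|_{dG,p}$ against $\|e_T^k\|$ and the $\sigma$-weighted face seminorm of $e_T^k$ controlled by $\|e_T^k\|_{dG,T}$. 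Invoking the running hypothesis \eqref{eq:RobQSTPE_stab_condition2} to replace $c_{fM}\|T_h^k\|_{dG,\infty}$ by $\sqrt{a_1}$, one expects an estimate of the form
\begin{equation*}
\bigl|\widetilde{\mathcal{C}}^{\text{stab}}_h(T_h^k,e_p^{k-1},e_T^k)\bigr|\lesssim C_\star\sqrt{a_1}\,\|e_p^{k-1}\|_{dG,p}\,\bigl(\|e_T^k\|+\|e_T^k\|_{dG,T}\bigr),
\end{equation*}
with $C_\star$ depending only on $k_M$, the mesh regularity, and the hidden constant in \eqref{eq:RobQSTPE_stab_condition2}.

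A Young's inequality with a small parameter $\varepsilon$ then allows one to absorb the $\|e_T^k\|^2$- and $\|e_T^k\|_{dG,T}^2$-contributions into the left-hand side, producing a recursion $\|e_p^k\|_{dG,p}^2\le\rho^2\,\|e_p^{k-1}\|_{dG,p}^2$ with $\rho<1$ as soon as the hidden constant in \eqref{eq:RobQSTPE_stab_condition2} is taken sufficiently small. Geometric decay of $\|e_p^k\|_{dG,p}$, re-inserted into the energy bound above, then drives $\|\mathbf{e}_{\mathbf u}^k\|_{dG,e}$, $\|e_T^k\|_{dG,T}$, $\|e_\varphi^k\|$ and the $L^2$-errors to zero, which is the claim.

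The main obstacle is not the bookkeeping of the energy identity but the nonlinearity of the stabilization. The pieces of $\widetilde{\mathcal{C}}^{\text{stab}}_h$ involving $|\avg{-c_f\mathbf{K}\nabla_h p}\cdot\mathbf{n}|$ and the negative part $(-c_f\mathbf{K}\nabla_h p\cdot\mathbf{n})^\ominus$ are \emph{not} linear in $p$, so the trilinear splitting implicit in \eqref{eq:RobQSTPE_errors_equations} is only formal; a rigorous derivation must exploit the Lipschitz character of $|\cdot|$ and $(\cdot)^\ominus$ (e.g.\ $\bigl||a|-|b|\bigr|\le|a-b|$) to dominate the mismatch by precisely the $\|\sqrt{\mathbf{K}}\nabla_h e_p^{k-1}\|$-type quantities already handled in the previous step. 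Once this technicality is addressed, the contraction argument closes.
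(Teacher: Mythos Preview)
Your overall plan matches the paper's proof: test \eqref{eq:RobQSTPE_errors_equations} with $(\mathbf{e}_{\mathbf u}^k,e_p^k,e_T^k,e_\varphi^k)$, invoke Lemmata~\ref{lem:RobQSTPE_boundcoerc_bil_forms}--\ref{lem:RobQSTPE_Mh} and the quasi-skew-symmetry bound \eqref{eq:RobQSTPE_quasiskew_Ch} (with $\boldsymbol{\eta}=-c_f\mathbf{K}\nabla_h p_h^k$ controlled by \eqref{eq:RobQSTPE_stab_condition1}) to reduce to estimating $\widetilde{\mathcal{C}}^{\text{stab}}_h(T_h^k,e_p^{k-1},e_T^k)$, then close a contraction on $\|e_p^{\,\cdot}\|_{dG,p}$.

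The one substantive difference is in how you bound the face contributions. You pair them against ``the $\sigma$-weighted face seminorm of $e_T^k$ controlled by $\|e_T^k\|_{dG,T}$''. Since $\sigma$ carries the factor $\gamma_{\boldsymbol{\Theta}}$, writing $\|\jump{e_T^k}\|_F=\sigma^{-1/2}\|\sigma^{1/2}\jump{e_T^k}\|_F$ injects a $\gamma_{\boldsymbol{\Theta}}^{-1/2}$ into your $C_\star$, so $C_\star$ is \emph{not} independent of $\boldsymbol{\Theta}$ as you assert, and the $\boldsymbol{\Theta}$-independent hypothesis \eqref{eq:RobQSTPE_stab_condition2} cannot by itself force the contraction ratio below $1$. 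The paper sidesteps this by never touching $\|e_T^k\|_{dG,T}$ on the faces: it applies the discrete trace-inverse inequality \eqref{eq:RobQSTPE_trace_inverse_ineq} \emph{twice}, once on $c_f\mathbf{K}\nabla_h e_p^{k-1}$ and once on $e_T^k$ itself, so that the $h_\kappa/\ell^2$ donated by $\|\jump{T_h^k}\|_{\mathbf{L}^\infty(F)}\le (h_\kappa/\ell^2)\|T_h^k\|_{dG,\infty}$ exactly cancels the two inverse-inequality losses. Everything lands in the plain $L^2$-norm $\|e_T^k\|$, which is then absorbed by the $a_1\|e_T^k\|^2$ term; the resulting contraction factor is $(C_{\mathrm{tr}}^4{+}1)\,c_{fM}^2 a_1^{-1}\|T_h^k\|_{dG,\infty}^2$, precisely what \eqref{eq:RobQSTPE_stab_condition2} is tailored to make small.

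Your closing observation about the nonlinearity of the upwind and inflow stabilizations is correct and worth recording: equation \eqref{eq:RobQSTPE_errors_equations} as written treats $\widetilde{\mathcal{C}}^{\text{stab}}_h$ as linear in its second slot, but the $|\cdot|$ and $(\cdot)^\ominus$ pieces are not. The Lipschitz bounds $\bigl||a|-|b|\bigr|\le|a-b|$ and $|a^\ominus-b^\ominus|\le|a-b|$ you propose are exactly what is needed; once inserted, they produce the same $|\avg{-c_f\mathbf{K}\nabla_h e_p^{k-1}}\cdot\mathbf{n}|$ factor that the paper's estimate of $\mathcal{I}_2$ then controls.
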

\begin{proof}
	First, we observe that using the inf-sup condition \eqref{eq:RobQSTPE_gen_inf_sup} and taking $(\mathbf{v}_h, q_h, S_h, \psi_h) = (\mathbf{e}_{\mathbf{u}}^k, 0, 0, 0)$ in \eqref{eq:RobQSTPE_errors_equations} we can bound the $L^2$-norm of $e_\varphi^k$ as 
	\begin{equation}\label{eq:RobQSTPE_proof_fixedpoint1}
		\mathbb{B}^2\|e_\varphi^k\|^2 \lesssim 
		\mathcal{D}_h(e_\varphi^k, e_\varphi^k) + \left(
		\underset{\mathbf{0} \neq \mathbf{v}_h \in \mathbf{V}^{\ell}_h}{\mbox{sup}} \frac{\mathcal{A}_h^e(\mathbf{e}_{\mathbf{u}}^k, \mathbf{v}_h)}{||\mathbf{v}_h||_{DG,e}} \right)^2 \lesssim
		\mathcal{D}_h(e_\varphi^k, e_\varphi^k) + \|\mathbf{e}_{\mathbf{u}}^k\|_{DG,e}^2.
	\end{equation}
	Then, using the inequality in \eqref{eq:RobQSTPE_quasiskew_Ch} under assumption \eqref{eq:RobQSTPE_stab_condition1}, we obtain
	$$
	-\frac{a_1}2 \|e_T^k\|^2 \le
	-(C_{\rm tr} +1) \big|c_f\mathbf{K}\nabla_h(p_h^{k})\big|_{dG,\infty} \|e_T^k\|^2 \le    \widetilde{\mathcal{C}}^{\text{stab}}_h(e_T^k,p_h^k,e_T^k). 
	$$
	Using the previous estimate and Lemma \ref{lem:RobQSTPE_boundcoerc_bil_forms} and \ref{lem:RobQSTPE_Mh} together with \eqref{eq:RobQSTPE_proof_fixedpoint1}, we infer
	$$
	\begin{aligned}
		&\frac{a_1}2 \|e_T^k\|^2 + b_1 \|e_\varphi^k\|^2 + c_1 \|e_p^k\|^2 + \|\mathbf{e}_{\mathbf{u}}^k\|_{dG,e}^2 + \|e_T^k \|_{dG,T}^2 + \|e_p^k \|_{dG,p}^2 \le
		\\
		&a_1 \|e_T^k\|^2 + b_1 \|e_\varphi^k\|^2 + c_1 \|e_p^k\|^2 + \|\mathbf{e}_{\mathbf{u}}^k\|_{dG,e}^2 + \|e_T^k \|_{dG,T}^2 + \|e_p^k \|_{dG,p}^2 + \widetilde{\mathcal{C}}^{\text{stab}}_h(e_T^k,p_h^k,e_T^k)
		\lesssim
		\\
		& \mathcal{M}((e_p^k, e_T^k, e_\varphi^k),(e_p^k, e_T^k, e_\varphi^k)) +  \mathcal{D}_h(e_\varphi^k,e_\varphi^k) + \mathcal{A}_h^{e}(\mathbf{e}_{\mathbf{u}}^k,\mathbf{e}_{\mathbf{u}}^k) +\mathcal{A}_h^{T}(e_T^k,e_T^k) 
		+ \mathcal{A}_h^{p}(e_p^k,e_p^k) +\widetilde{\mathcal{C}}^{\text{stab}}_h(e_T^k,p_h^k,e_T^k).
	\end{aligned}
	$$
	Therefore, taking $(\mathbf{v}_h, q_h, S_h, \psi_h) = (\mathbf{e}_{\mathbf{u}}^k, e_p^k, e_T^k, e_\varphi^k)$ in \eqref{eq:RobQSTPE_errors_equations} and recalling the definition of the trilinear form $\widetilde{\mathcal{C}}^{\text{stab}}_h$, we have
	\begin{equation}\label{eq:RobQSTPE_conv_term0}
		\frac{a_1}2 \|e_T^k\|^2 + b_1 \|e_\varphi^k\|^2 + c_1 \|e_p^k\|^2 + \|\mathbf{e}_{\mathbf{u}}^k\|_{dG,e}^2 + \|e_T^k \|_{dG,T}^2 + \|e_p^k \|_{dG,p}^2 \lesssim
		-\widetilde{\mathcal{C}}^{\text{stab}}_h(T_h^{k},e_p^{k-1},e_T^k)
		= \mathcal{I}_1 + \mathcal{I}_2,
	\end{equation}
	with $\mathcal{I}_1$ and $\mathcal{I}_2$ defined as
	$$
	\begin{aligned}
		\mathcal{I}_1 &= \big( c_f \big(\mathbf{K} \ \nabla_h e_p^{k-1}\big) \cdot \nabla_h T_h^{k}, e_T^k \big), 
		\\ 
		\mathcal{I}_2 &= 
		\sum_{F \in \mathcal{F}_I} \int_{F}\big(\avg{\smash{-c_f \mathbf{K} \nabla_h e_p^{k-1}}} \cdot \jump{\smash{T_h^{k}}}  \avg{\smash{e_T^k}}
		- \frac{|\avg{-c_f\mathbf{K}\nabla_h e_p^{k-1}} \cdot \mathbf{n}|}2 \jump{\smash{T_h^k}} \cdot {\jump{\smash{e_T^k}}} \big)
		\\
		&\quad\, +\sum_{F \in \mathcal{F}_B} \int_F (c_f \mathbf{K} \nabla_h e_p^{k-1} \cdot \mathbf{n})^{\ominus} \ T_h^{k}\ e_T^k.
	\end{aligned}
	$$
	Using the H\"older and Young inequalities, we get
	\begin{equation}\label{eq:RobQSTPE_conv_term1}
		\mathcal{I}_1 \le 
		c_{fM} \|\nabla_h T_h^{k}\|_{\mathbf{L}^\infty(\Omega)}
		\|e_p^{k-1} \|_{dG,p} \|e_T^k\|\le 
		\frac{a_1}{4\varepsilon}\|e_T^k\|^2 + 
		\frac{\varepsilon c_{fM}^2}{a_1} \|T_h^{k}\|_{dG,\infty}^2 \|e_p^{k-1}\|_{dG,p}^2.
	\end{equation}
	For what concern the term $\mathcal{I}_2$, we apply the H\"older, and discrete trace-inverse, and Young inequalities to obtain
	\begin{equation}\label{eq:RobQSTPE_conv_term2}
		\begin{aligned}
			\mathcal{I}_2 &\le
			\sum_{F \in \mathcal{F}} \int_{F}  
			|\avg{\smash{-c_f\mathbf{K}\nabla_h e_p^{k-1}}}\cdot\mathbf{n}|\; |\jump{\smash{T_h^k}}|\;
			\frac{2\left|\avg{\smash{e_T^k}}\right| + \left|\jump{\smash{e_T^k}}\right|}2
			\\
			&\le \sum_{F\in\mathcal{F}}\sum_{\kappa\in\{\kappa^+,\kappa^-\}} 
			\|c_f  \mathbf{K} \nabla_h e_p^{k-1}\|_{F} \
			\|\jump{\smash{T_h^k}}\|_{\mathbf{L}^\infty(F)} \ \|e_T^k\|_F
			\\
			&\le \left(\max_{F\in\mathcal{F}} 
			\underset{\kappa \in \{\kappa^+,\kappa^-\}}{\max} \frac{\ell^2}{h_{\kappa}} \|\jump{\smash{T_h^k}}\|_{\mathbf{L}^\infty(F)}
			\hspace{-0.5mm}\right)\hspace{-0.5mm}
			\sum_{\kappa\in\mathcal{T}_h}
			\frac{h_{\kappa}^{1/2}}{\ell} \|c_f  \mathbf{K} \nabla_h e_p^{k-1}\|_{\partial\kappa} \
			\frac{h_{\kappa}^{1/2}}{\ell} \|e_T^k\|_{\partial\kappa}
			\\
			&\le \left(\max_{F\in\mathcal{F}} 
			\underset{\kappa \in \{\kappa^+,\kappa^-\}}{\max} \frac{\ell^2}{h_{\kappa}} \|\jump{\smash{T_h^k}}\|_{\mathbf{L}^\infty(F)}
			\hspace{-0.5mm}\right) C_{\rm tr}^2
			\left(\sum_{\kappa\in\mathcal{T}_h} \|c_f\mathbf{K}\nabla_h e_p^{k-1}\|_{\kappa}^2 \right)^{\frac12}
			\left(\sum_{\kappa\in\mathcal{T}_h} \|e_T^k\|_{\kappa}^2 \right)^{\frac12}
			\\
			&\le C_{\rm tr}^2 c_{fM} \|T_h^{k}\|_{dG,\infty}
			\|e_p^{k-1} \|_{dG,p} \|e_T^k\|
			\le 
			\frac{a_1}{4\varepsilon}\|e_T^k\|^2 + 
			\frac{\varepsilon C_{\rm tr}^4 c_{fM}^2}{a_1} \|T_h^{k}\|_{dG,\infty}^2 
			\|e_p^{k-1}\|_{dG,p}^2.
		\end{aligned}
	\end{equation}
	Hence, plugging \eqref{eq:RobQSTPE_conv_term1} and \eqref{eq:RobQSTPE_conv_term2} into \eqref{eq:RobQSTPE_conv_term0} and fixing $\varepsilon$ equal to the hidden constant in \eqref{eq:RobQSTPE_conv_term0}, yields
	$$
	\|e_p^k \|_{dG,p}^2 \lesssim (C_{\rm tr}^4+1) c_{fM}^2 a_1^{-1} \|T_h^{k}\|_{dG,\infty}^2 \|e_p^{k-1}\|_{dG,p}^2.
	$$
	As a result of condition \eqref{eq:RobQSTPE_stab_condition2}, the previous inequality shows that the map $(\mathbf{e}_{\mathbf{u}}^{k-1}, e_p^{k-1}, e_T^{k-1}, e_\varphi^{k-1})\to(\mathbf{e}_{\mathbf{u}}^k, e_p^k, e_T^k, e_\varphi^k)$ is a contraction. Thus, the conclusion follows by applying Banach fixed-point theorem.
\end{proof}

\section{Numerical results}
\label{sec:RobQSTPE_numerical_result}
The aim of this section is to assess the performance of the scheme in terms of accuracy and robustness. 

In \cite[Section 6.2]{AntoniettiBonetti2022} it has been shown that the fixed-point algorithm iterating on the temperature gradient may not converge for advection-dominated regimes, i.e. when hydraulic mobility is much greater than thermal conductivity. In this section we analyze in detail the new linearization we propose according to the stabilization correction we apply in case of advection-dominated regimes, with a special focus on the case of vanishing thermal conductivity. First, we look at the convergence properties of the method. Then, we inspect the numerical robustness with respect to the model's coefficients; to this aim, we focus on the case of very small $\boldsymbol{\Theta}$, but being careful of not losing robustness with respect to the other sensible parameters of the model, namely $a_0$, $b_0$, $c_0$, and $\mathbf{K}$. We compare different linearization schemes, that are denoted in the following way: consider the solution of the $m^{th}$-iteration of the fixed-point algorithm, we compare the following constructions of the linearized trilinear form  $\mathcal{C}^*_h(T_h, p_h, S_h)$: 
\begin{equation}
	\begin{aligned}
		\mathcal{C}^{\text{old}}_h (p^m,S) = \ & \big( -c_f \left(\mathbf{K} \ \nabla_h p^{m}\right) \cdot \nabla_h T^{m-1}, S \big), \\
		\mathcal{C}^{\text{vol}}_h (T^m,S) = \ & \big( -c_f \left(\mathbf{K} \ \nabla_h p^{m-1}\right) \cdot \nabla_h T^m, S \big), \\
		\mathcal{C}_h (T^m,S) = \ & \big( -c_f \left(\mathbf{K} \ \nabla_h p^{m-1}\right) \cdot \nabla_h T^m, S \big) - \sum_{F \in \mathcal{F}_I} \int_{F} \left( \wavg{ - c_f \ \mathbf{K} \nabla_h p^{m-1} } \cdot \jump{T^m} \right) \wavg{S}, \\
		\mathcal{C}^{\text{stab}}_h (T^m,S) = \ & \big( -c_f \left(\mathbf{K} \ \nabla_h p^{m-1}\right) \cdot \nabla_h T^m, S \big) - \sum_{F \in \mathcal{F}_I} \int_{F} \left( \wavg{ - c_f \ \mathbf{K} \nabla_h p^{m-1} } \cdot \jump{T^m} \right) \wavg{S} \\
		+ & \sum_{F \in \mathcal{F}} \int_F \ \frac{\left| \wavg{ - c_f \ \mathbf{K} \nabla_h p^{m-1}} \cdot \mathbf{n} \right|}{2} \jump{T^m} \mkern-2.5mu \cdot \mkern-2.5mu \jump{S} - \sum_{F \in \mathcal{F}_B} \int_F \ \frac{ (- c_f \ \mathbf{K} \nabla_h p^{m-1}) \cdot \mathbf{n}}{2} \, T^m \, S.
	\end{aligned}
\end{equation}
For the details of the iterative algorithm exploiting $\mathcal{C}^{\text{old}}_h$, see \cite{AntoniettiBonetti2022, Brun2019}.

In all the numerical tests, we consider a PolyDG-WSIP spatial discretization. The sequence of two-dimensional polygonal Voronoi meshes has been generated through the \texttt{Polymesher} algorithm \cite{Talischi2012}. Last, all the penalty coefficients $\alpha_i$, $i=1,\dots,4$ in \eqref{eq:RobQSTPE_stabilization_func} are set equal to $10$.
Thanks to the introduction of the total pressure--total pressure stabilization, we can use equal order approximations for the four unknowns of the problem. Thus, in every test we set $\ell = m$ and, for the sake of simplicity, we make use only of the symbol $\ell$ to denote the polynomial degree.

\subsection{Convergence Test}
\label{sec:RobQSTPE_conv_test}
We start the analysis by assessing the performance of the method in terms of accuracy. We consider problem \eqref{eq:RobQSTPE_QS_TPE_system} in the domain $\Omega = (0,1)^2$ with the following manufactured analytical solution:
\begin{equation}
	\begin{aligned}
		\mathbf{u}(x,y) & \ = \left( \begin{aligned}
			& x^2 \cos\left(\frac{\pi x}{2}\right) \sin(\pi x) \\
			& x^2 \cos\left(\frac{\pi x}{2}\right) \sin(\pi x)
		\end{aligned} \right), \ \
		p(x,y) = x^2 \sin(\pi x) \sin(\pi y), \ \
		T(x,y) = -y^2 \sin(\pi x) \sin(\pi y),
	\end{aligned}
\end{equation}
through which we infer the boundary conditions and forcing terms. The model coefficients are reported in Table~\ref{tab:RobQSTPE_TPE_params_convtest}. 
\begin{table}[ht]
	\centering 
	\footnotesize
	\begin{tabular}{l | c c l | c c l | c}
		$a_0 \ [\si[per-mode = symbol]{\giga\pascal \per \kelvin\squared}]$ & 0.02 & & $\alpha \ [-]$ & 1 & & $\mu, \lambda \ [\si{\giga\pascal}]$ & 1, 5  \\
		$b_0 \ [\si{\per \kelvin}]$ & 0.01 & & $\beta \ [\si{\giga\pascal \per \kelvin}]$ & 0.8 & & $\mathbf{K} \ [\si{\dm \squared \per \giga\pascal \per \hour}]$ & 0.2 \\
		$c_0 \ [\si{\per \giga\pascal}]$ & 0.03 & & $c_f \ [\si{\pascal \per \kelvin\squared}]$ & 1 & & $\boldsymbol{\Theta} \ [\si{\dm \squared \giga\pascal \per \kelvin\squared \per \hour}]$ & 0.05
	\end{tabular}
	\caption{Test case of Section~\ref{sec:RobQSTPE_conv_test}: problem's parameters for the convergence analysis}
	\label{tab:RobQSTPE_TPE_params_convtest}
\end{table}

The convergence of the PolyDG scheme is tested both with respect to the mesh size $h$ and to the polynomial degree $\ell$. 
We consider three different constructions of the bilinear form $\mathcal{C}^*_h$. Indeed, we do not assess the performance of the stabilized version as we are not in an advection-dominated regime. 
For the $h$-convergence a sequence of polygonal Voronoi meshes is considered and we test different polynomial degrees $\ell = 2, 3, 4$ (one for each linearization). For what concerns the convergence with respect to $\ell$ and for a fixed mesh size, we fix a computational mesh of $100$ elements and vary the polynomial degree $\ell = 1,2,\dots,8$.
In Figure~\ref{fig:RobQSTPE_ConvH_Brun} and Figure~\ref{fig:RobQSTPE_ConvP_Brun} we show the computed errors in the $L^2$- and $dG$-norms defined as in \eqref{eq:RobQSTPE_DG_norms} versus $h$ and $\ell$, respectively, with the use of the form $\mathcal{C}^{\mathrm{old}}_h$ (cf. \cite{Brun2019, AntoniettiBonetti2022}). In Figure~\ref{fig:RobQSTPE_ConvH_DarcyVol}, Figure~\ref{fig:RobQSTPE_ConvP_DarcyVol} the same results are reported for the form $\mathcal{C}^{\mathrm{vol}}_h$. In both cases, we observe that the results match the predicted convergence rates in the framework of PolyDG spatial discretizations \cite[Theorem 5.3]{AntoniettiBonetti2022}. 

\input{Fig/ConvH.tikz}
\input{Fig/ConvP.tikz}

For what concerns the convergence vs $h$ we observe that, by using $\ell = 2, 3$, the $dG$-errors for all the three unknowns decrease as $h^2$ and $h^3$, respectively. Moreover, for what concerns the $L^2$-errors, we achieve $h^{\ell+1}$ convergence. Looking at the convergence with respect to $\ell$ we see that in the two cases, both for the $L^2$- and the $dG$-errors, we observe an exponential decrease of the error.

Finally, we test the accuracy of the method with the whole linearization $\mathcal{C}_h$ (without any stabilization, as we are not in the advection-dominated regime) and we report the results in Figure~\ref{fig:RobQSTPE_ConvH_Darcy}, Figure~\ref{fig:RobQSTPE_ConvP_Darcy}. Even in this case, we observe that the results are coherent with respect to the previous linearizations and to the estimates presented in \cite[Section 5]{AntoniettiBonetti2022}. Thus, we prove that the modifications of the trilinear form explained in Section~\ref{sec:RobQSTPE_DG_problem} do not affect the convergence properties of the method. 

We conclude this section by looking at the fixed-point iteration count for the convergence of our scheme. The tolerance is set as $\num[exponent-product=\ensuremath{\cdot}, print-unity-mantissa=false]{1e-10}$ and the stopping criterion is the norm of two successive iterations. 

In Figure~\ref{fig:RobQSTPE_Conv_iterations} we observe that the fixed-point iteration counts are very low and, in general, decrease both with respect to $N$ and $\ell$. Only for the case $\ell = 8$ we observe a small increment for $\mathcal{C}_h^{\text{vol}}$ and $\mathcal{C}_h$. We think that this may be due to the conditioning of the linear system.
For the most of the cases we can also observe that the fixed-point iteration counts required considering the linearizations presented in this article is lower with respect to the fixed-point iteration count when using $\mathcal{C}_h^{\text{old}}$.

\begin{center}
\begin{figure}[H]
\centering
\begin{subfigure}[b]{0.49\textwidth}
\begin{tikzpicture}
\begin{axis}[%
width=0.65\textwidth,
height=0.5\textwidth,
at={(0\textwidth,0\textwidth)},
scale only axis,
xmode=log,
xmin=4,
xmax=75,
xminorticks=true,
xlabel={$1/h$},
ymax=11,
yminorticks=true,
ylabel={\#Iterations},
legend style={draw=none,fill=none,legend cell align=left},
legend pos=north east
]
\addplot [color=myred,solid,line width=1.5pt, mark=diamond*,mark options={color=myred}] table[row sep=crcr]{
5.5214   10 \\
9.7539   9 \\
17.5835  8 \\
30.4349  7 \\
55.1835 6 \\
};
\addlegendentry{$\mathcal{C}_{h}^{\text{old}}$}
\addplot [color=myblue,solid,line width=1.5pt, mark=diamond*,mark options={color=myblue}] table[row sep=crcr]{
5.5214   4 \\
9.7539   3 \\
17.5835  3 \\
30.4349  3 \\
55.1835 2 \\
};
\addlegendentry{$\mathcal{C}_{h}^{\text{vol}}$}
\addplot [color=mygreen,solid,line width=1.5pt, mark=diamond*,mark options={color=mygreen}] table[row sep=crcr]{
5.5214    3 \\
9.7539    3 \\
17.5835   2 \\
30.4349   2 \\
55.1835  2 \\
};
\addlegendentry{$\mathcal{C}_{h}$}
\end{axis}
\end{tikzpicture}
\end{subfigure}
\begin{subfigure}[b]{0.49\textwidth}
\begin{tikzpicture}
\begin{axis}[%
width=0.65\textwidth,
height=0.5\textwidth,
at={(0\textwidth,0\textwidth)},
scale only axis,
xmin=0,
xmax=8.5,
xminorticks=true,
xlabel={$\ell$},
ymax=15,
yminorticks=true,
ylabel={\#Iterations},
legend style={draw=none,fill=none,legend cell align=left},
legend pos=north east
]
\addplot [color=myred,solid,line width=1.5pt, mark=diamond*,mark options={color=myred}] table[row sep=crcr]{
1   14 \\
2   10 \\
3   7 \\
4   6 \\
5   2 \\
6   2 \\
7   2 \\
8   2 \\
};
\addlegendentry{$\mathcal{C}_{h}^{\text{old}}$}
\addplot [color=myblue,solid,line width=1.5pt, mark=diamond*,mark options={color=myblue}] table[row sep=crcr]{
1   5 \\
2   5 \\
3   4 \\
4   3 \\
5   3 \\
6   2 \\
7   2 \\
8   7 \\
};
\addlegendentry{$\mathcal{C}_{h}^{\text{vol}}$}
\addplot [color=mygreen,solid,line width=1.5pt, mark=diamond*,mark options={color=mygreen}] table[row sep=crcr]{
1   5 \\
2   5 \\
3   4 \\
4   3 \\
5   3 \\
6   2 \\
7   2 \\
8   3 \\
};
\addlegendentry{$\mathcal{C}_{h}$}
\end{axis}
\end{tikzpicture}
\end{subfigure}
\caption{Test case of Section~\ref{sec:RobQSTPE_conv_test} (convergence test): number of iterations of the fixed point algorithm for the convergence with respect to $h$ (left, \textit{semilogx} scale) and  with respect to $\ell$ (right).}
\label{fig:RobQSTPE_Conv_iterations}
\end{figure}
\end{center}

\subsection{Robustness Tests}
\label{sec:RobQSTPE_rob_test}
In this section, we address the main point of this article: ensuring numerical robustness of the scheme with respect to the numerical parameters. In particular, we are interested in testing the robustness with respect to low values of the permeability $\mathbf{K}$ and thermal conductivity $\boldsymbol{\Theta}$. Moreover, in the presented tests, we stress also the coefficients related to the mass terms ($a_0$, $b_0$, and $c_0$) considering them to be $a_0 = b_0 = c_0$ and, eventually, equal to $0$. The values of the other model parameters are as in Table~\ref{tab:RobQSTPE_TPE_params_convtest}.

In \cite{AntoniettiBonetti2022} the robustness with respect to model parameters has been already addressed. Therein, it is observed that the numerical scheme in which $\mathcal{C}_h^{\text{old}}$ is used exibits good performance when $\mathbf{K} \ll \boldsymbol{\Theta}$ and when $\mathbf{K}, \boldsymbol{\Theta} \ll 1$. However, it is possible to observe instabilities when $\boldsymbol{\Theta} \ll \mathbf{K}$. In this situation, we can see \eqref{eq:RobQSTPE_energy_cons} as an advection dominated problem and following this observation we can introduce suitable stabilization terms for making the scheme able to cop with this degenerate case.


We compare the four linearizations $\mathcal{C}_h^{\text{old}}$, $\mathcal{C}_h^{\text{vol}}$, $\mathcal{C}_h$, $\mathcal{C}_h^{\text{stab}}$ in terms of performance when the thermal conductivity degenerates. Moreover, we perform some tests in which we consider $\mathbf{K} \ll \boldsymbol{\Theta}$ and $\mathbf{K}, \boldsymbol{\Theta} \ll 1$, in order to verify that although we change the linearization to make the method more robust with respect to $\boldsymbol{\Theta}$, we do not lose robustness with respect to the other physical parameters (cf. \cite{AntoniettiBonetti2022}).

\begin{remark}
	We observe that, compared with the values of the physical parameters presented in \cite{AntoniettiBonetti2022}, in this case we expect the fixed-point algorithm either to converge slowly or diverge. This is because, in the stationary problem, we do not have the scaling of the parameters with respect to the time step, which was taken much smaller than 1 (namely, we considered $\delta t = \num[exponent-product=\ensuremath{\cdot}, print-unity-mantissa=true]{5e-5}$). In practice, we expect that for $\mathcal{C}_h^{\text{old}}$ the algorithm will fail for higher values of $\boldsymbol{\Theta}$ than we can observe in \cite{AntoniettiBonetti2022}.
\end{remark}

\subsubsection{Robustness with respect to degenerating thermal conductivity}
\label{sec:RobQSTPE_rob_test_theta}
In this section we focus on the robustness of the method with respect to the thermal conductivity. To this aim, we consider the following choice of the parameters: through the sequence of the simulations we keep constant $a_0 = b_0 = c_0 = 0$ and $\mathbf{K} = 1 \mathbf{I}$; instead, we consider the following sequence for the values of $\boldsymbol{\Theta} = \{1 \mathbf{I}, \num[exponent-product=\ensuremath{\cdot}, print-unity-mantissa=false]{1e-2} \mathbf{I}, \num[exponent-product=\ensuremath{\cdot}, print-unity-mantissa=false]{1e-4} \mathbf{I}, \num[exponent-product=\ensuremath{\cdot}, print-unity-mantissa=false]{1e-6} \mathbf{I}, \num[exponent-product=\ensuremath{\cdot}, print-unity-mantissa=false]{1e-8} \mathbf{I}, \num[exponent-product=\ensuremath{\cdot}, print-unity-mantissa=false]{1e-10} \mathbf{I} \}$.

We test the three linearizations proposed in this article and the linearization of \cite{Brun2019, AntoniettiBonetti2022}. For the presented sets of physical parameters we perform three simulations considering $N = [1000, 310, 100]$ and $\ell = [2, 3, 4]$. Then, we compare their performance in terms of errors ($L^2$- and $dG$-norms) and of fixed-point iteration counts. 

The results concerning the case ($\ell = 3, N = 310)$ are presented in Figure~\ref{fig:RobQSTPE_RobThetaErrors_P3} and Table~\ref{tab:RobQSTPE_RobThetaIterations} (second stripe). First, we observe that the linearization $\mathcal{C}_h^{\text{old}}$ leads to a fixed-point algorithm that is not convergent. Indeed, the fixed-point iteration counts and the errors blow up for all the values of $\boldsymbol{\Theta}$ lower than $1$. The three linearizations presented in this article work better. The algorithm using $\mathcal{C}_h^{\text{vol}}$ converges for the first four values of $\boldsymbol{\Theta}$, the algorithm that exploits $\mathcal{C}_h$ for the first five values and the stabilized version converges for all the tested values of $\boldsymbol{\Theta}$. By looking at the results for the temperature field $T_h$, we can observe that the $L^2$-errors for all the three methods increase as $\boldsymbol{\Theta}$ decreases. For what concerns the stabilized version, we see that after $\num[exponent-product=\ensuremath{\cdot}, print-unity-mantissa=false]{1e-6}$ settles around the value of \num[exponent-product=\ensuremath{\cdot}]{7e-2}. The same behavior can be observed also for the displacement and pressure fields. It is of particular interest to notice that the $L^2$- and $dG$-errors of the scheme exploiting $\mathcal{C}_h^{\text{stab}}$ remain almost constant as $\boldsymbol{\Theta}$ decreases for $\mathbf{u}_h$ and $p_h$. Similar results are obtained for the cases $(\ell = 2, N = 1000)$ and $(\ell = 4, N = 100)$ (cf. \ref{sec:RobQSTPE_rob_test_appendix_further_results}).

By looking at Table~\ref{tab:RobQSTPE_RobThetaIterations}, it is interesting to notice that -- for the three methods that make use of $\mathcal{C}_h^{\text{vol}}$, $\mathcal{C}_h$, and $\mathcal{C}_h^{\text{stab}}$ -- the higher the polynomial degree of approximation, the more robust the scheme. In fact, we can see that the threshold value for which the method does not converge decreases as the polynomial degree increases, and in addition, the iteration number remains comparable (often lower) than in previous cases. We can notice that -- in the case of the stabilized bilinear form $\mathcal{C}_h^{\text{stab}}$ -- most of the quantities analyzed remain constant and hover around a value below a certain threshold. This fact is also a qualitative guarantee of limited error and number of iterations even for degenerate values of thermal conductivity.

The numerical values of the results presented in the supplementary material of the article.

\begin{table}[H]
	\centering 
	\footnotesize
	\begin{tabular}{c | c | c | c | c | c | c | c}
		\textbf{Test} & \textbf{Linearization} & $\theta = 1$ & $\theta = \num[exponent-product=\ensuremath{\cdot}, print-unity-mantissa=false]{1e-2}$ & $\theta = \num[exponent-product=\ensuremath{\cdot}, print-unity-mantissa=false]{1e-4}$ & $\theta =  \num[exponent-product=\ensuremath{\cdot}, print-unity-mantissa=false]{1e-6}$ & $\theta = \num[exponent-product=\ensuremath{\cdot}, print-unity-mantissa=false]{1e-8}$ & $\theta = \num[exponent-product=\ensuremath{\cdot}, print-unity-mantissa=false]{1e-10}$ \T\B \\
		\hline
		\multirow{4}{*}{\vspace{-0.3cm} \rotatebox[origin=b]{90}{\specialcell{$\ell = 2$ \\ $N = 1000$}}}
		& $\mathcal{C}_h^{\text{old}}$ & 7 & \cellcolor{red!30}1000 & \cellcolor{red!30}1000 & \cellcolor{red!30}1000 & \cellcolor{red!30}1000 & \cellcolor{red!30}1000 \T\B \\
		& $\mathcal{C}_h^{\text{vol}}$ & 4 & 10 & 8 & \cellcolor{red!30}1000 & \cellcolor{red!30}1000 & \cellcolor{red!30}1000 \T\B \\
		& $\mathcal{C}_h$ & 4 & 10 & 11 & 9 & \cellcolor{red!30}1000 & \cellcolor{red!30}1000 \T\B \\
		& $\mathcal{C}_h^{\text{stab}}$ & 4 & 10 & 8 & 17 & 20 & 10 \T\B \\
		\hline
		\multirow{4}{*}{\vspace{-0.3cm} \rotatebox[origin=b]{90}{\specialcell{$\ell = 3$ \\ $N = 310$}}}
		& $\mathcal{C}_h^{\text{old}}$ & 5 & \cellcolor{red!30}1000 & \cellcolor{red!30}1000 & \cellcolor{red!30}1000 & \cellcolor{red!30}1000 & \cellcolor{red!30}1000 \T\B \\
		& $\mathcal{C}_h^{\text{vol}}$ & 3 & 8 & 5 & 11 & \cellcolor{red!30}1000 & \cellcolor{red!30}1000 \T\B \\
		& $\mathcal{C}_h$ & 3 & 8 & 9 & 7 & 48 & \cellcolor{red!30}1000 \T\B \\
		& $\mathcal{C}_h^{\text{stab}}$ & 3 & 8 & 7 & 7 & 7 & 8 \T\B \\
		\hline
		\multirow{4}{*}{\vspace{-0.3cm} \rotatebox[origin=b]{90}{\specialcell{$\ell = 4$ \\ $N = 100$}}}
		& $\mathcal{C}_h^{\text{old}}$ & 5 & \cellcolor{red!30}1000 & \cellcolor{red!30}1000 & \cellcolor{red!30}1000 & \cellcolor{red!30}1000 & \cellcolor{red!30}1000 \T\B \\
		& $\mathcal{C}_h^{\text{vol}}$ & 43 & 19 & 5 & 17 & \cellcolor{red!30}1000 & \cellcolor{red!30}1000 \T\B \\
		& $\mathcal{C}_h$ & 3 & 44 & 5 & 6 & 6 & 6 \T\B \\
		& $\mathcal{C}_h^{\text{stab}}$ & 3 & 33 & 5 & 7 & 7 & 7 \T\B \\
	\end{tabular}
	\caption{Test case of Section~\ref{sec:RobQSTPE_rob_test_theta} (robustness w.r.t. degenerating thermal conductivity): fixed-point iteration counts for the convergence of the algorithm versus $\theta$. The cells highlighted in red are the tests for which the maximum number of iterations is reached.}
	\label{tab:RobQSTPE_RobThetaIterations}
\end{table}

\begin{remark}
	We observed that, without introducing the \textit{inflow} stabilization for the boundary faces, the linearized form $\mathcal{C}_h^{\text{uw}}$ leads to performance that are similar to the ones of $\mathcal{C}_h$. Indeed, $\mathcal{C}_h^{\text{stab}}$ turns out to be way more robust.  
\end{remark}
\begin{remark}
	For the sake of completeness, we have reported also the value of the $dG$-errors, defined as in \eqref{eq:RobQSTPE_DG_norms}, for all the three cases. However, we observe that, as the thermal conductivity enters in the definition of the norm $\| \cdot \|_{dG,T}$ (cf. \eqref{eq:RobQSTPE_DG_norms}), their values are altered by the fact that we are considering very low values for $\boldsymbol{\Theta}$. It is still interesting to notice that the $dG$-errors for $\mathcal{C}_h^{\text{old}}$ blows up, while the $dG$-errors related to  $\mathcal{C}_h^{\text{vol}}$,  $\mathcal{C}_h$, and $\mathcal{C}_h^{\text{stab}}$ remain bounded around quite low values.
\end{remark}
\input{Fig/RobTheta_p3.tikz}

\subsubsection{Robustness with respect to degenerating permeability}
\label{sec:RobQSTPE_rob_test_kappa}
The aim of this section is to inspect the robustness of the scheme with respect to degenerating values of $\mathbf{K}$. As the method proposed in \cite{AntoniettiBonetti2022} was robust with respect to the hydraulic mobility, we directly test it considering $\mathbf{K} = \num[exponent-product=\ensuremath{\cdot}, print-unity-mantissa=false]{1e-10}  \mathbf{I}$ to verify that, by changing the linearization bilinear form to ensure robustness with respect to $\boldsymbol{\Theta}$, we do not lose robustness with respect to $\mathbf{K}$. The other physical parameters we consider are: $a_0 = b_0 = c_0 = 0$ and $\boldsymbol{\Theta} = 1 \mathbf{I}$. We consider a sequence of Voronoi meshes whose number of elements is $N = [100, 310, 1000, 3100, 10000]$ and polynomial degree of approximation equal to $\ell = 2$.

In Figure~\ref{fig:RobQSTPE_RobKappaErrors_P2}, Table~\ref{tab:RobQSTPE_RobKappaIterations_P2} we report the results for the errors and the number of iterations, respectively. First, we observe that for the three linearizations proposed in this article, the fixed-point algorithm converge in a few iterations and that the number of iterations is comparable for the three cases. For what concerns $\mathcal{C}_h^{\text{old}}$, we see that globally it converges. Even if it reaches the maximum number of iterations for $N = 3100$, we observe that the computed error for that refinement is very close to the one computed using the other three methods. Concerning the errors, we see that we have convergence both in $L^2$- and $dG$-norms. In the $L^2$-case we have order of convergence equal to $2$ (i.e. we lose the $\ell+1$ accuracy rate), while in the $dG$-case we have order of accuracy below $2$. 

\begin{figure}[H]
\centering
\begin{subfigure}[b]{0.49\textwidth}
\begin{tikzpicture}
\begin{axis}[%
width=0.65\textwidth,
height=0.5\textwidth,
at={(0\textwidth,0\textwidth)},
scale only axis,
xmode=log,
xmin=4,
xmax=75,
xminorticks=true,
xlabel={$1/h$},
ymode=log,
ymin=1e-3,
ymax=0.5,
yminorticks=true,
ylabel={$L^2$-Errors ($p_h$)},
legend style={draw=none,fill=none,legend cell align=left},
legend pos=south west
]
\addplot [color=myyellow,solid,line width=1.5pt, mark=diamond*,mark options={color=myyellow}]
  table[row sep=crcr]{
5.5214   0.18439 \\
9.7539   0.056148 \\
17.5835  0.021448 \\
30.4349  0.0079298 \\
55.1835  0.0025529 \\
};
\addlegendentry{$\mathcal{C}^{\text{old}}_h$}

\addplot [color=myblue,solid,line width=1.5pt,mark=square*,mark options={color=myblue}]
  table[row sep=crcr]{
5.5214   0.18439 \\
9.7539   0.056148 \\
17.5835  0.021448 \\
30.4349  0.0079298 \\
55.1835  0.0025529 \\
};
\addlegendentry{$\mathcal{C}^{\text{vol}}_h$}

\addplot [color=mygreen,solid,line width=1.5pt,mark=triangle*,mark options={color=mygreen}]
  table[row sep=crcr]{
5.5214   0.18439 \\
9.7539   0.056148 \\
17.5835   0.021448 \\
30.4349   0.0079298 \\
55.1835   0.0025529 \\
};
\addlegendentry{$\mathcal{C}_h$}

\addplot [color=myred,solid,line width=1.5pt,mark=triangle*,mark options={color=myred}]
  table[row sep=crcr]{
5.5214   0.18439 \\
9.7539   0.056148 \\
17.5835   0.021448 \\
30.4349   0.0079298 \\
55.1835   0.0025529 \\
};
\addlegendentry{$\mathcal{C}^{\text{stab}}_h$}

\addplot [color=black,solid,line width=0.5pt]
  table[row sep=crcr]{
    30.4349      0.0132 \\
    55.1835      0.0132 \\
    55.1835      0.004 \\
    30.4349      0.0132 \\  
};
\node[right, align=left, text=black, font=\footnotesize]
at (axis cs:55.1835, 0.0086) {2}; 

\end{axis}
\end{tikzpicture}
\end{subfigure}
\begin{subfigure}[b]{0.49\textwidth}
\begin{tikzpicture}
\begin{axis}[%
width=0.65\textwidth,
height=0.5\textwidth,
at={(0\textwidth,0\textwidth)},
scale only axis,
xmode=log,
xmin=4,
xmax=75,
xminorticks=true,
xlabel={$1/h$},
ymode=log,
ymin=1e-5,
ymax=3e-4,
yminorticks=true,
ylabel={$dG$-Errors ($p_h$)},
legend style={draw=none,fill=none,legend cell align=left},
legend pos=south west
]
\addplot [color=myyellow,solid,line width=1.5pt, mark=diamond*,mark options={color=myyellow}]
  table[row sep=crcr]{
5.5214   0.00014555 \\
9.7539   7.3643e-05 \\
17.5835  4.418e-05 \\
30.4349  2.7949e-05 \\
55.1835  1.5159e-05 \\
};
\addlegendentry{$\mathcal{C}^{\text{old}}_h$}

\addplot [color=myblue,solid,line width=1.5pt,mark=square*,mark options={color=myblue}]
  table[row sep=crcr]{
5.5214   0.00014555 \\
9.7539   7.3643e-05 \\
17.5835  4.418e-05 \\
30.4349  2.7949e-05 \\
55.1835  1.5159e-05 \\
};
\addlegendentry{$\mathcal{C}^{\text{vol}}_h$}

\addplot [color=mygreen,solid,line width=1.5pt,mark=triangle*,mark options={color=mygreen}]
  table[row sep=crcr]{
5.5214   0.00014555 \\
9.7539   7.3643e-05 \\
17.5835  4.418e-05 \\
30.4349  2.7949e-05 \\
55.1835  1.5159e-05 \\
};
\addlegendentry{$\mathcal{C}_h$}

\addplot [color=myred,solid,line width=1.5pt,mark=triangle*,mark options={color=myred}]
  table[row sep=crcr]{
5.5214   0.00014555 \\
9.7539   7.3643e-05 \\
17.5835  4.418e-05 \\
30.4349  2.7949e-05 \\
55.1835  1.5159e-05 \\
};
\addlegendentry{$\mathcal{C}^{\text{stab}}_h$}

\end{axis}
\end{tikzpicture}
\end{subfigure}
\caption{Test case of Section~\ref{sec:RobQSTPE_rob_test_kappa} (robustness w.r.t. degenerating permeability): computed errors for the pressure $p_h$ in $L^2$-norm (left) and $dG$-norm (right) versus $1/h$ (\textit{log-log} scale). The different colors represent the different linearization schemes. The polynomial degree of approximation is $\ell = 2$.}
\label{fig:RobQSTPE_RobKappaErrors_P2}
\end{figure}

\begin{table}[H]
	\centering 
	\footnotesize
	\begin{tabular}{ c | c  | c | c | c | c }
		\textbf{Linearization} & $h = 0.1811$ & $h = 0.1025$ & $h = 0.0569$ & $h = 0.0329$ & $h = 0.0181$ \T\B \\
		\hline \T\B
		$\mathcal{C}_h^{\text{old}}$ & 3 & \cellcolor{red!30}1000 & 2 & 2 & 2 \T\B \\
		$\mathcal{C}_h^{\text{vol}}$ & 3 & 3 & 2 & 2 & 2 \T\B \\
		$\mathcal{C}_h$ & 3 & 2 & 3 & 3 & 2 \T\B \\
		$\mathcal{C}_h^{\text{stab}}$ & 3 & 3 & 3 & 2 & 2 \T\\
	\end{tabular}
	\caption{Test case of Section~\ref{sec:RobQSTPE_rob_test_kappa} (robustness w.r.t. degenerating permeability): fixed-point iteration counts for the convergence of the algorithm versus $h$. The polynomial degree of approximation is $\ell = 2$. The cells highlighted in red are the test cases for which the maximum number of iterations is reached.}
	\label{tab:RobQSTPE_RobKappaIterations_P2}
\end{table}

We observe that the $\ell+1$ accuracy in the $L^2$-norm is not guaranteed by the theory, we are only sure the $L^2$-errors converge at least with rate $\ell$, when using $\ell$ as polynomial degree of approximation. In the $dG$-norm we do not observe the expected order because the values of the errors are altered by the fact that the permeability directly enters in the definition of the $dG$-norms, cf. \eqref{eq:RobQSTPE_DG_norms}.

The analysis for the displacement $\mathbf{u}_h$ and the temperature $T_h$ is reported in Appendix~\ref{sec:RobQSTPE_rob_test_appendix_further_results}. The numerical values of the results presented in the supplementary material of the article.

\subsubsection{Robustness with respect to degenerating thermal conductivity and permeability}
\label{sec:RobQSTPE_rob_test_thetakappa}

As for the previous test, the aim of this section is to verify that we have not lost robustness with respect to the scheme proposed in \cite{Brun2019, AntoniettiBonetti2022}. Here, we consider the case $\mathbf{K}, \boldsymbol{\Theta} \ll 1$. On a sequence of Voronoi meshes with $N = [100, 310, 1000, 3100, 10000]$ and using as polynomial degree of approximation $\ell = 2$, we use run a convergence test with the following values of the model's coefficient: $\mathbf{K} = \boldsymbol{\Theta} = \num[exponent-product=\ensuremath{\cdot}, print-unity-mantissa=false]{1e-10}  \mathbf{I}$ and $a_0 = b_0 = c_0 = 0.01$. As in the previous section we observe numerical robustness of the method with respect to small values of permeability and thermal conductivity. In Figure~\ref{fig:RobQSTPE_RobThetaKappaErrors_pressure_P2}, Figure~\ref{fig:RobQSTPE_RobThetaKappaErrors_temperature_P2} the errors for the pressure and temperature fields are reported, respectively. In both the two cases we observe $2$ as order of accuracy in $L^2$-norm and convergence in $dG$-norm. We observe in Table~\ref{tab:RobQSTPE_RobThetaKappaIterations_P2} that the number of iterations  necessary for the convergence of the fixed-point iteration scheme is slightly greater than the previous case, however it is still limited and quite low (maximum is $11$) considering that we are using very low  values for the two tensors. The analysis for the displacement $\mathbf{u}_h$ is reported in Appendix~\ref{sec:RobQSTPE_rob_test_appendix_further_results}. The computed numerical values of the results presented in the supplementary material of the article.

\begin{figure}[H]
\centering
\begin{subfigure}[b]{1\textwidth}
\begin{subfigure}[b]{0.49\textwidth}
\begin{tikzpicture}
\begin{axis}[%
width=0.65\textwidth,
height=0.5\textwidth,
at={(0\textwidth,0\textwidth)},
scale only axis,
xmode=log,
xmin=4,
xmax=75,
xminorticks=true,
xlabel={$1/h$},
ymode=log,
ymin=1e-3,
ymax=0.2,
yminorticks=true,
ylabel={$L^2$-Errors ($p_h$)},
legend style={draw=none,fill=none,legend cell align=left},
legend pos=south west
]
\addplot [color=myyellow,solid,line width=1.5pt, mark=diamond*,mark options={color=myyellow}]
  table[row sep=crcr]{
5.5214   0.09964 \\
9.7539   0.030585 \\
17.5835   0.011751 \\
30.4349   0.0043638 \\
55.1835   0.0014118 \\
};
\addlegendentry{$\mathcal{C}^{\text{old}}_h$}

\addplot [color=myblue,solid,line width=1.5pt,mark=square*,mark options={color=myblue}]
  table[row sep=crcr]{
5.5214   0.09964 \\
9.7539   0.030585 \\
17.5835   0.011751 \\
30.4349   0.0043638 \\
55.1835   0.0014118 \\
};
\addlegendentry{$\mathcal{C}^{\text{vol}}_h$}

\addplot [color=mygreen,solid,line width=1.5pt,mark=triangle*,mark options={color=mygreen}]
  table[row sep=crcr]{
5.5214   0.099649 \\
9.7539   0.030586 \\
17.5835   0.011752 \\
30.4349   0.0043638 \\
55.1835   0.0014118 \\
};
\addlegendentry{$\mathcal{C}_h$}

\addplot [color=myred,solid,line width=1.5pt,mark=triangle*,mark options={color=myred}]
  table[row sep=crcr]{
5.5214   0.099643 \\
9.7539   0.030582 \\
17.5835   0.011751 \\
30.4349   0.0043637 \\
55.1835   0.0014118 \\
};
\addlegendentry{$\mathcal{C}^{\text{stab}}_h$}

\addplot [color=black,solid,line width=0.5pt]
  table[row sep=crcr]{
    30.4349      0.0066 \\
    55.1835      0.0066 \\
    55.1835      0.002 \\
    30.4349      0.0066 \\  
};
\node[right, align=left, text=black, font=\footnotesize]
at (axis cs:55.1835, 0.0043) {2}; 

\end{axis}
\end{tikzpicture}
\end{subfigure}
\begin{subfigure}[b]{0.49\textwidth}
\begin{tikzpicture}
\begin{axis}[%
width=0.65\textwidth,
height=0.5\textwidth,
at={(0\textwidth,0\textwidth)},
scale only axis,
xmode=log,
xmin=4,
xmax=75,
xminorticks=true,
xlabel={$1/h$},
ymode=log,
ymin=5e-6,
ymax=1e-4,
yminorticks=true,
ylabel={$dG$-Errors ($p_h$)},
legend style={draw=none,fill=none,legend cell align=left},
legend pos=south west
]
\addplot [color=myyellow,solid,line width=1.5pt, mark=diamond*,mark options={color=myyellow}]
  table[row sep=crcr]{
5.5214   8.0956e-05 \\
9.7539   4.094e-05 \\
17.5835   2.4558e-05 \\
30.4349   1.5539e-05 \\
55.1835   8.4361e-06 \\
};
\addlegendentry{$\mathcal{C}^{\text{old}}_h$}

\addplot [color=myblue,solid,line width=1.5pt,mark=square*,mark options={color=myblue}]
  table[row sep=crcr]{
5.5214   8.0956e-05 \\
9.7539   4.094e-05 \\
17.5835  2.4558e-05 \\
30.4349  1.5539e-05 \\
55.1835  8.4361e-06 \\
};
\addlegendentry{$\mathcal{C}^{\text{vol}}_h$}

\addplot [color=mygreen,solid,line width=1.5pt,mark=triangle*,mark options={color=mygreen}]
  table[row sep=crcr]{
5.5214   8.0955e-05 \\
9.7539   4.094e-05 \\
17.5835  2.4558e-05 \\
30.4349  1.5539e-05 \\
55.1835  8.4361e-06 \\
};
\addlegendentry{$\mathcal{C}_h$}

\addplot [color=myred,solid,line width=1.5pt,mark=triangle*,mark options={color=myred}]
  table[row sep=crcr]{
5.5214   8.0956e-05 \\
9.7539   4.094e-05 \\
17.5835  2.4558e-05 \\
30.4349  1.5539e-05 \\
55.1835  8.4361e-06 \\
};
\addlegendentry{$\mathcal{C}^{\text{stab}}_h$}
\end{axis}
\end{tikzpicture}
\end{subfigure}
\caption{Robustness test vs $\mathbf{K} = k \mathbf{I}, \boldsymbol{\Theta} = \theta \mathbf{I}$: computed errors for the pressure $p_h$.}
\label{fig:RobQSTPE_RobThetaKappaErrors_pressure_P2}
\end{subfigure}

\begin{subfigure}[b]{1\textwidth}
\begin{subfigure}[b]{0.49\textwidth}
\begin{tikzpicture}
\begin{axis}[%
width=0.65\textwidth,
height=0.5\textwidth,
at={(0\textwidth,0\textwidth)},
scale only axis,
xmode=log,
xmin=4,
xmax=75,
xminorticks=true,
xlabel={$1/h$},
ymode=log,
ymin=1e-3,
ymax=0.2,
yminorticks=true,
ylabel={$L^2$-Errors ($T_h$)},
legend style={draw=none,fill=none,legend cell align=left},
legend pos=south west
]
\addplot [color=myyellow,solid,line width=1.5pt, mark=diamond*,mark options={color=myyellow}]
  table[row sep=crcr]{
5.5214   0.09964 \\
9.7539   0.030584 \\
17.5835  0.011751 \\
30.4349  0.0043632 \\
55.1835  0.0014113 \\
};
\addlegendentry{$\mathcal{C}^{\text{old}}_h$}

\addplot [color=myblue,solid,line width=1.5pt,mark=square*,mark options={color=myblue}]
  table[row sep=crcr]{
5.5214   0.09964 \\
9.7539   0.030584 \\
17.5835  0.011751 \\
30.4349  0.0043632 \\
55.1835  0.0014113 \\
};
\addlegendentry{$\mathcal{C}^{\text{vol}}_h$}

\addplot [color=mygreen,solid,line width=1.5pt,mark=triangle*,mark options={color=mygreen}]
  table[row sep=crcr]{
5.5214   0.099649 \\
9.7539   0.030585 \\
17.5835  0.011751 \\
30.4349  0.0043632 \\
55.1835  0.0014113 \\
};
\addlegendentry{$\mathcal{C}_h$}

\addplot [color=myred,solid,line width=1.5pt,mark=triangle*,mark options={color=myred}]
  table[row sep=crcr]{
5.5214   0.099642 \\
9.7539   0.030582 \\
17.5835  0.01175 \\
30.4349  0.0043631 \\
55.1835  0.0014113 \\
};
\addlegendentry{$\mathcal{C}^{\text{stab}}_h$}

\addplot [color=black,solid,line width=0.5pt]
  table[row sep=crcr]{
    30.4349      0.0066 \\
    55.1835      0.0066 \\
    55.1835      0.002 \\
    30.4349      0.0066 \\  
};
\node[right, align=left, text=black, font=\footnotesize]
at (axis cs:55.1835, 0.0043) {2}; 

\end{axis}
\end{tikzpicture}
\end{subfigure}
\begin{subfigure}[b]{0.49\textwidth}
\begin{tikzpicture}
\begin{axis}[%
width=0.65\textwidth,
height=0.5\textwidth,
at={(0\textwidth,0\textwidth)},
scale only axis,
xmode=log,
xmin=4,
xmax=75,
xminorticks=true,
xlabel={$1/h$},
ymode=log,
ymin=5e-6,
ymax=1e-4,
yminorticks=true,
ylabel={$dG$-Errors ($T_h$)},
legend style={draw=none,fill=none,legend cell align=left},
legend pos=south west
]
\addplot [color=myyellow,solid,line width=1.5pt, mark=diamond*,mark options={color=myyellow}]
  table[row sep=crcr]{
5.5214   8.0952e-05 \\
9.7539   4.0936e-05 \\
17.5835  2.4552e-05 \\
30.4349  1.5531e-05 \\
55.1835  8.425e-06 \\
};
\addlegendentry{$\mathcal{C}^{\text{old}}_h$}

\addplot [color=myblue,solid,line width=1.5pt,mark=square*,mark options={color=myblue}]
  table[row sep=crcr]{
5.5214   8.0952e-05 \\
9.7539   4.0936e-05 \\
17.5835  2.4552e-05 \\
30.4349  1.5531e-05 \\
55.1835  8.425e-06 \\
};
\addlegendentry{$\mathcal{C}^{\text{vol}}_h$}

\addplot [color=mygreen,solid,line width=1.5pt,mark=triangle*,mark options={color=mygreen}]
  table[row sep=crcr]{
5.5214   8.0952e-05 \\
9.7539   4.0936e-05 \\
17.5835  2.4552e-05 \\
30.4349  1.5531e-05 \\
55.1835  8.425e-06 \\
};
\addlegendentry{$\mathcal{C}_h$}

\addplot [color=myred,solid,line width=1.5pt,mark=triangle*,mark options={color=myred}]
  table[row sep=crcr]{
5.5214   8.0952e-05 \\
9.7539   4.0936e-05 \\
17.5835  2.4552e-05 \\
30.4349  1.5531e-05 \\
55.1835  8.425e-06 \\
};
\addlegendentry{$\mathcal{C}^{\text{stab}}_h$}
\end{axis}
\end{tikzpicture}
\end{subfigure}
\caption{Robustness test vs $\mathbf{K} = k \mathbf{I}, \boldsymbol{\Theta} = \theta \mathbf{I}$: computed errors for the temperature $T_h$.}
\label{fig:RobQSTPE_RobThetaKappaErrors_temperature_P2}
\end{subfigure}
\caption{Test case of Section~\ref{sec:RobQSTPE_rob_test_kappa} (robustness w.r.t. degenerating permeability and thermal conductivity): computed errors in $L^2$-norm (left column) and $dG$-norms (right column) versus $1/h$ (\textit{log-log} scale). The different colors represent the different linearization schemes. The polynomial degree of approximation is $\ell = 2$.}
\label{fig:RobQSTPE_RobThetaKappaErrors_P2}
\end{figure}
\begin{table}[H]
	\centering 
	\footnotesize
	\begin{tabular}{ c | c  | c | c | c | c }
		\textbf{Linearization} & $h = 0.1811$ & $h = 0.1025$ & $h = 0.0569$ & $h = 0.0329$ & $h = 0.0181$ \T\B \\
		\hline \T\B
		$\mathcal{C}_h^{\text{old}}$ & 4 & 8 & 4 & 4 & 3 \T\B \\
		$\mathcal{C}_h^{\text{vol}}$ & 6 & 11 & 4 & 4 & 3 \T\B \\
		$\mathcal{C}_h$ & 4 & 7 & 4 & 4 & 3 \T\B \\
		$\mathcal{C}_h^{\text{stab}}$ & 9 & 18 & 6 & 5 & 3 \T\\
	\end{tabular}
	\caption{Test case of Section~\ref{sec:RobQSTPE_rob_test_thetakappa} (robustness w.r.t. degenerating permeability and thermal conductivity): fixed-point iteration counts for the convergence the algorithm versus $h$. The polynomial degree of approximation is $\ell = 2$.}
	\label{tab:RobQSTPE_RobThetaKappaIterations_P2}
\end{table}

\section{Conclusions}
In this work we have presented a four-field PolyDG-WSIP formulation for the non-linear fully-coupled thermo-hydro-mechanical problem. The stability estimate and the convergence of the fixed-point iteration scheme are presented. Numerical simulations are performed to assess the theoretical bounds and test the robustness properties of the method. The results confirm that the PolyDG-WSIP scheme proposed here is appealing for real problems' simulations. 

Further developments of the present work are possible. In particular, we mention the extension to other non-linear models, such the Darcy-Forchheimer flow equation, the inclusion of the non-linear convective term and of  the stabilization techniques presented in this paper in the fully-dynamic problem, and the generalization of the non-linear term including the time-derivative of the solid displacement. Furthermore, for what concerns the solution strategies, one possible development is the implementation of effective splitting schemes for reducing the large computational cost required by a non-linear fully-coupled problem. This last development is of particular interest for the extension of the numerical implementation to the three-dimensional case.

\section*{Funding}
S.B and P.F.A. has been partially funded by the research grant PRIN2020 n. 20204LN5N5 funded by the Italian Ministry of Universities and Research (MUR). P.F.A. has been partially funded by the Italian Research Center on High Performance Computing, Big Data and Quantum Computing (ICSC), under the NextGenerationEU project. S.B., M.B., and P.F.A. are members of INdAM-GNCS. The work of M.B. has been partially supported by the INdAM-GNCS project CUP E55F22000270001. The present research is part of the activities of the project Dipartimento di Eccellenza 2023-2027, Dipartimento di Matematica, Politecnico di Milano.

\bibliography{bibliography}

\appendix
\section{Superconvergence of the PolyDG-WSIP scheme for the linear THM problem}
\label{sec:RobQSTPE_Superconvergence}

The aim of this section is to prove that the PolyDG-WSIP scheme proposed for the THM problem is not only optimal convergence, but it also shows some superconvergence properties in the linear case. To this aim, we consider the following exact solutions:
\begin{equation}
	\begin{aligned}
		\mathbf{u}(x,y) & \ = \nu_u \ \left( \begin{aligned}
			& x^2 \cos\left(\frac{\pi x}{2}\right) \sin(\pi x) \\
			& x^2 \cos\left(\frac{\pi x}{2}\right) \sin(\pi x)
		\end{aligned} \right), \\
		p(x,y) & \ = \nu_p \ x^2 \sin(\pi x) \sin(\pi y), \\
		T(x,y) & \ = -\nu_T \ y^2 \sin(\pi x) \sin(\pi y),
	\end{aligned}
\end{equation}
through which we infer the boundary conditions and forcing terms. The parameters $\nu_u$, $\nu_p$, $\nu_T$ control the magnitude of the displacement, pressure, and temperature, respectively. The model coefficients are reported in Table~\ref{tab:RobQSTPE_TPE_params_convtest} (chosen as in the convergence test).

We observe that the optimal convergence property (even for the non-linear problem) has been already proven in Section~\ref{sec:RobQSTPE_conv_test} by setting $\nu_u = \nu_p = \nu_T = 1$.

In order to observe the better robustness of the scheme with respect to large pressure and temperatures, we propose to different tests. In the first, we set $\nu_u = 0.1$, $\nu_p = \nu_T = \num[exponent-product=\ensuremath{\cdot}, print-unity-mantissa=false]{1e+4}$, we consider the same sequence of meshes as in Section~\ref{sec:RobQSTPE_conv_test} and we test different polynomial degrees, i.e. $\ell = 2, 3, 4$. For the second test, we fix $\nu_u = 0.1$, the mesh, and the polynomial degree of approximation; then we vary the values of $\nu_p = \nu_T =  \left[ 1, \num[exponent-product=\ensuremath{\cdot}, print-unity-mantissa=false]{1e+1}, \num[exponent-product=\ensuremath{\cdot}, print-unity-mantissa=false]{1e+2}, \num[exponent-product=\ensuremath{\cdot}, print-unity-mantissa=false]{1e+3}, \num[exponent-product=\ensuremath{\cdot}, print-unity-mantissa=false]{1e+4}, \num[exponent-product=\ensuremath{\cdot}, print-unity-mantissa=false]{1e+5}, \num[exponent-product=\ensuremath{\cdot}, print-unity-mantissa=false]{1e+6} \right]$, concurrently. For the second test we consider the following couples of discretization parameters: $(N = 10000, \ \ell = 2)$, $(N = 3100, \ \ell = 3)$, $(N = 1000, \ \ell = 4)$. 

By looking at Figure~\ref{fig:RobQSTPE_SuperconvAppendix_vsH_L2} and Figure~\ref{fig:RobQSTPE_SuperconvAppendix_vsH_dG} we observe that, for all the tested polynomial degree, we have the superconvergence phenomenon for the displacement. Indeed, we observe that using polynomial degree of approximation equal to $\ell$, then the error of the displacement in $dG$-norm converges with order $\ell+1$ (we remark that the expected order is $\ell$ in this case and this rate is observed for the temperature and the pressure). Moreover, for what concerns the error in $L^2$-norm we observe $(\ell+1)+1$ convergence rate. In Figure~\ref{fig:RobQSTPE_SuperconvAppendix_vsH_L2} (right) we observe a slight plateau in the last refinement, this is due to the fact that we are reaching very low values for the error and numerical errors appear too.

In Figure~\ref{fig:RobQSTPE_SuperconvAppendix_vsNu_L2} and Figure~\ref{fig:RobQSTPE_SuperconvAppendix_vsNu_dG} we observe the behaviour of the errors with respect to increasing values of $\nu_p, \nu_T$. We see that in all the tested cases and for both the $L^2$- and $dG$-errors are way lower than the errors of pressure and temperature (even for values of $\nu_p, \nu_T$ not too big). It is interesting to notice that, for $\nu_p, \nu_T = [1, 10, 100]$ the errors of the displacement remain almost constant, while the errors of the pressure and temperature start growing.

\input{Fig/SuperconvergenceH.tikz}
\begin{center}
\begin{figure}[H]
\begin{subfigure}[b]{0.33\textwidth}
\begin{tikzpicture}
\begin{axis}[%
width=0.65\textwidth,
height=0.5\textwidth,
at={(0\textwidth,0\textwidth)},
scale only axis,
xmode=log,
xmin=0.99,
xmax=1000001,
xminorticks=true,
xlabel={$\nu_p, \nu_T$},
ymode=log,
ymin=1e-8,
ymax=0.3,
yminorticks=true,
ylabel={$L^2$-Errors},
legend style={draw=none,fill=none,legend cell align=left},
legend pos=north west
]
\addplot [color=myred,solid,line width=1.5pt, mark=diamond*,mark options={color=myred}]
  table[row sep=crcr]{
1  2.4117e-08   \\
10  2.63658e-08   \\
100  1.126826e-07   \\
1000  1.1042621e-06   \\
10000  1.10432653e-05   \\
100000  0.0001104356695   \\
1000000   0.0011043602045   \\
};
\addlegendentry{\footnotesize{$\mathbf{u}_h(\mathbf{x},t)$}}

\addplot [color=myblue,solid,line width=1.5pt,mark=square*,mark options={color=myblue}]
  table[row sep=crcr]{
1  1.9018e-07   \\
10  1.90175e-06   \\
100  1.901748e-05   \\
1000  0.00019017476   \\
10000   0.00190174755   \\
100000    0.01901747538   \\
1000000     0.19017475578   \\
};
\addlegendentry{\footnotesize{$p_h(\mathbf{x},t)$}}

\addplot [color=mygreen,solid,line width=1.5pt,mark=triangle*,mark options={color=mygreen}]
  table[row sep=crcr]{
1  1.9126e-07   \\
10  1.91256e-06   \\
100  1.912564e-05   \\
1000  0.00019125636   \\
10000   0.00191256357   \\
100000    0.01912563589   \\
1000000     0.19125635955   \\
};
\addlegendentry{\footnotesize{$T_h(\mathbf{x},t)$}}

\end{axis}
\end{tikzpicture}
\end{subfigure}
\begin{subfigure}[b]{0.33\textwidth}
\begin{tikzpicture}
\begin{axis}[%
width=0.65\textwidth,
height=0.5\textwidth,
at={(0\textwidth,0\textwidth)},
scale only axis,
xmode=log,
xmin=0.99,
xmax=1000001,
xminorticks=true,
xlabel={$\nu_p, \nu_T$},
ymode=log,
ymin=9e-10,
ymax=0.02,
yminorticks=true,
ylabel={$L^2$-Errors},
legend style={draw=none,fill=none,legend cell align=left},
legend pos=north west
]
\addplot [color=myred,solid,line width=1.5pt, mark=diamond*,mark options={color=myred}]
  table[row sep=crcr]{
1  1.0704e-09   \\
10   1.0781e-09   \\
100  1.646648e-09   \\
1000  1.2511399e-08   \\
10000  1.24610706e-07   \\
100000  1.246016887e-06   \\
1000000   1.246013997e-05   \\
};
\addlegendentry{$\mathbf{u}_h(\mathbf{x},t)$}

\addplot [color=myblue,solid,line width=1.5pt,mark=square*,mark options={color=myblue}]
  table[row sep=crcr]{
1  1.0707e-08   \\
10  1.07066e-07   \\
100  1.070661e-06   \\
1000  1.0706606e-05   \\
10000  0.000107066063   \\
100000   0.001070660636   \\
1000000    0.010706606375   \\
};
\addlegendentry{$p_h(\mathbf{x},t)$}

\addplot [color=mygreen,solid,line width=1.5pt,mark=triangle*,mark options={color=mygreen}]
  table[row sep=crcr]{
1   1.155e-08   \\
10  1.15503e-07   \\
100  1.155032e-06   \\
1000  1.1550321e-05   \\
10000  0.000115503208   \\
100000   0.001155032084   \\
1000000    0.011550320843   \\
};
\addlegendentry{$T_h(\mathbf{x},t)$}

\end{axis}
\end{tikzpicture}
\end{subfigure}
\begin{subfigure}[b]{0.33\textwidth}
\begin{tikzpicture}
\begin{axis}[%
width=0.65\textwidth,
height=0.5\textwidth,
at={(0\textwidth,0\textwidth)},
scale only axis,
xmode=log,
xmin=0.99,
xmax=1000001,
xminorticks=true,
xlabel={$\nu_p, \nu_T$},
ymode=log,
ymin=1e-10,
ymax=0.002,
yminorticks=true,
ylabel={$L^2$-Errors},
legend style={draw=none,fill=none,legend cell align=left},
legend pos=north west
]
\addplot [color=myred,solid,line width=1.5pt, mark=diamond*,mark options={color=myred}]
  table[row sep=crcr]{
1   2.292e-10   \\
10   2.3142e-10   \\
100  4.490305e-10   \\
1000  3.9329955e-09   \\
10000  3.93286097e-08   \\
100000  3.933447902e-07   \\
1000000  3.9335112038e-06   \\
};
\addlegendentry{$\mathbf{u}_h(\mathbf{x},t)$}

\addplot [color=myblue,solid,line width=1.5pt,mark=square*,mark options={color=myblue}]
  table[row sep=crcr]{
1  1.9689e-09   \\
10   1.9689e-08   \\
100  1.968898e-07   \\
1000  1.9688976e-06   \\
10000  1.96889757e-05   \\
100000  0.0001968897578   \\
1000000   0.0019688975746   \\
};
\addlegendentry{$p_h(\mathbf{x},t)$}

\addplot [color=mygreen,solid,line width=1.5pt,mark=triangle*,mark options={color=mygreen}]
  table[row sep=crcr]{
1  1.7755e-09   \\
10  1.77553e-08   \\
100  1.775532e-07   \\
1000  1.7755315e-06   \\
10000  1.77553151e-05   \\
100000  0.0001775531535   \\
1000000   0.0017755315303   \\
};
\addlegendentry{$T_h(\mathbf{x},t)$}

\end{axis}
\end{tikzpicture}
\end{subfigure}
\caption{Superconvergence test: computed errors in $L^2$-norm versus $\nu_p, \nu_T$ (\textit{log-log} scale) using as polynomial degrees of approximation and number of elements $(\ell = 2, N = 10000)$ (left), $(\ell = 3, N = 3100)$ (center), and $(\ell = 4, N = 1000)$ (right).}
\label{fig:RobQSTPE_SuperconvAppendix_vsNu_L2}
\end{figure}
\end{center}

\begin{center}
\begin{figure}[H]
\begin{subfigure}[b]{0.33\textwidth}
\begin{tikzpicture}
\begin{axis}[%
width=0.65\textwidth,
height=0.5\textwidth,
at={(0\textwidth,0\textwidth)},
scale only axis,
xmode=log,
xmin=0.99,
xmax=1000001,
xminorticks=true,
xlabel={$\nu_p, \nu_T$},
ymode=log,
ymin=5e-5,
ymax=400,
yminorticks=true,
ylabel={$dG$-Errors},
legend style={draw=none,fill=none,legend cell align=left},
legend pos=north west
]
\addplot [color=myred,solid,line width=1.5pt, mark=diamond*,mark options={color=myred}]
  table[row sep=crcr]{
1  7.0178e-05   \\
10  7.01709e-05   \\
100  7.100584e-05   \\
1000  0.00013770447   \\
10000   0.00119681323   \\
100000    0.01195765878   \\
1000000     0.11958445883   \\
};
\addlegendentry{\footnotesize{$\mathbf{u}_h(\mathbf{x},t)$}}

\addplot [color=myblue,solid,line width=1.5pt,mark=square*,mark options={color=myblue}]
  table[row sep=crcr]{
1  0.00033979   \\
10   0.00339794   \\
100    0.03397935   \\
1000      0.3397935   \\
10000      3.39793505   \\
100000      33.97935049   \\
1000000      339.79350487   \\
};
\addlegendentry{\footnotesize{$p_h(\mathbf{x},t)$}}

\addplot [color=mygreen,solid,line width=1.5pt,mark=triangle*,mark options={color=mygreen}]
  table[row sep=crcr]{
1  0.00033801   \\
10   0.00338014   \\
100    0.03380144   \\
1000     0.33801437   \\
10000      3.38014374   \\
100000      33.80143735   \\
1000000      338.01437354   \\
};
\addlegendentry{\footnotesize{$T_h(\mathbf{x},t)$}}

\end{axis}
\end{tikzpicture}
\end{subfigure}
\begin{subfigure}[b]{0.33\textwidth}
\begin{tikzpicture}
\begin{axis}[%
width=0.65\textwidth,
height=0.5\textwidth,
at={(0\textwidth,0\textwidth)},
scale only axis,
xmode=log,
xmin=0.99,
xmax=1000001,
xminorticks=true,
xlabel={$\nu_p, \nu_T$},
ymode=log,
ymin=1e-6,
ymax=20,
yminorticks=true,
ylabel={$dG$-Errors},
legend style={draw=none,fill=none,legend cell align=left},
legend pos=north west
]
\addplot [color=myred,solid,line width=1.5pt, mark=diamond*,mark options={color=myred}]
  table[row sep=crcr]{
1  2.3551e-06   \\
10  2.35641e-06   \\
100  2.470975e-06   \\
1000   7.783547e-06   \\
10000   7.4162761e-05   \\
100000  0.0007411954496   \\
1000000   0.0074118555575   \\
};
\addlegendentry{\footnotesize{$\mathbf{u}_h(\mathbf{x},t)$}}

\addplot [color=myblue,solid,line width=1.5pt,mark=square*,mark options={color=myblue}]
  table[row sep=crcr]{
1  1.7834e-05   \\
10   0.00017834   \\
100   0.001783397   \\
1000    0.017833974   \\
10000     0.178339742   \\
100000       1.78339742   \\
1000000      17.833974201   \\
};
\addlegendentry{\footnotesize{$p_h(\mathbf{x},t)$}}

\addplot [color=mygreen,solid,line width=1.5pt,mark=triangle*,mark options={color=mygreen}]
  table[row sep=crcr]{
1  1.9092e-05   \\
10  0.000190918   \\
100   0.001909183   \\
1000    0.019091832   \\
10000     0.190918319   \\
100000      1.909183187   \\
1000000      19.091831874   \\
};
\addlegendentry{\footnotesize{$T_h(\mathbf{x},t)$}}

\end{axis}
\end{tikzpicture}
\end{subfigure}
\begin{subfigure}[b]{0.33\textwidth}
\begin{tikzpicture}
\begin{axis}[%
width=0.65\textwidth,
height=0.5\textwidth,
at={(0\textwidth,0\textwidth)},
scale only axis,
xmode=log,
xmin=0.99,
xmax=1000001,
xminorticks=true,
xlabel={$\nu_p, \nu_T$},
ymode=log,
ymin=1e-7,
ymax=2,
yminorticks=true,
ylabel={$dG$-Errors},
legend style={draw=none,fill=none,legend cell align=left},
legend pos=north west
]
\addplot [color=myred,solid,line width=1.5pt, mark=diamond*,mark options={color=myred}]
  table[row sep=crcr]{
1  3.0694e-07   \\
10  3.07014e-07   \\
100  3.314064e-07   \\
1000  1.3327708e-06   \\
10000  1.30197156e-05   \\
100000  0.0001302079254   \\
1000000   0.0013021222855   \\
};
\addlegendentry{\footnotesize{$\mathbf{u}_h(\mathbf{x},t)$}}

\addplot [color=myblue,solid,line width=1.5pt,mark=square*,mark options={color=myblue}]
  table[row sep=crcr]{
1  1.5388e-06   \\
10  1.53879e-05   \\
100  0.0001538792   \\
1000   0.0015387924   \\
10000    0.0153879242   \\
100000     0.1538792418   \\
1000000      1.5387924245   \\
};
\addlegendentry{\footnotesize{$p_h(\mathbf{x},t)$}}

\addplot [color=mygreen,solid,line width=1.5pt,mark=triangle*,mark options={color=mygreen}]
  table[row sep=crcr]{
1  1.5858e-06   \\
10  1.58577e-05   \\
100  0.0001585768   \\
1000   0.0015857683   \\
10000    0.0158576836   \\
100000     0.1585768363   \\
1000000      1.5857683614   \\
};
\addlegendentry{\footnotesize{$T_h(\mathbf{x},t)$}}

\end{axis}
\end{tikzpicture}
\end{subfigure}
\caption{Superconvergence test: computed errors in $dG$-norm versus $\nu_p, \nu_T$ (\textit{log-log} scale) using as polynomial degrees of approximation and number of elements $(\ell = 2, N = 10000)$ (left), $(\ell = 3, N = 3100)$ (center), and $(\ell = 4, N = 1000)$ (right).}
\label{fig:RobQSTPE_SuperconvAppendix_vsNu_dG}
\end{figure}
\end{center}

\section{Robustness tests: further results}
\label{sec:RobQSTPE_rob_test_appendix_further_results}

In this supplementary material Section we report the results concerning the robustness tests that are not presented in Section 5.2.

\subsection{Robustness with respect to degenerating thermal conductivity}

In this section we present the results of the tests presented in Section 5.2.1 for the configurations $(\ell = 2, N = 1000)$ and $(\ell = 4, N = 100)$. We recall that we consider the following values for the model's coefficients: we wide $\boldsymbol{\Theta} = \{1 \mathbf{I}, \num[exponent-product=\ensuremath{\cdot}, print-unity-mantissa=false]{1e-2} \mathbf{I}, \num[exponent-product=\ensuremath{\cdot}, print-unity-mantissa=false]{1e-4} \mathbf{I}, \num[exponent-product=\ensuremath{\cdot}, print-unity-mantissa=false]{1e-6} \mathbf{I}, \num[exponent-product=\ensuremath{\cdot}, print-unity-mantissa=false]{1e-8} \mathbf{I}, \num[exponent-product=\ensuremath{\cdot}, print-unity-mantissa=false]{1e-10} \mathbf{I} \}$ and keep constant $a_0 = b_0 = c_0 = 0$, $\mathbf{K} = 1 \mathbf{I}$. The other physical parameters are taken as in Section 5.1. We observe that the behavior is similar to the configuration $(\ell = 3, N = 310)$ presented in the article. We point out just two observations about these results. First, we notice that for specific values of the physical and the discretization parameters (cf. Figure~\ref{fig:RobQSTPE_RobThetaErrors_P2}), the stabilized scheme leads to a larger error than the unstabilized case. This is expected when stabilization is not necessary. Second, we observe that increasing the polynomial degree ($\ell = 4$, cf. Figure~\ref{fig:RobQSTPE_RobThetaErrors_P4}), we get equivalent results for $\mathcal{C}_h$ and $\mathcal{C}_h^{\text{stab}}$ in terms of displacement and pressure field (moreover, the algorithm converges in both the cases, cf. Table 3). This may be an additional hint that increasing the polynomial degree helps us in solving correctly the non-linearity (for the linearizations proposed in this article).

\input{Fig/RobTheta_p2_Appendix.tikz}
\input{Fig/RobTheta_p4_Appendix.tikz}

\subsection{Robustness with respect to degenerating permeability}
In this section we present the results of the tests presented in Section 5.2.2 for the displacement $\mathbf{u}_h$ and the temperature $T_h$ in the case of degenerating permeability. We recall that we consider the following values for the model's coefficients: $\mathbf{K} = \num[exponent-product=\ensuremath{\cdot}, print-unity-mantissa=false]{1e-10} \mathbf{I}$, $\boldsymbol{\Theta} = 1 \mathbf{I}$, and $a_0 = b_0 = c_0 = 0$. The other physical parameters are taken as in Section 5.1. We take $\ell = 2$ as polynomial degree of approximation and a sequence of Voronoi meshes with $N = [100, 310, 1000, 3100, 10000]$ elements.

We observe that, as expected, the results are very similar (essentially equivalent) for the four linearizations. By using polynomial degree of approximation $\ell = 2$, we see that the $dG$-errors decay as $h^{\ell} = h^2$ as the very low value of the permeability coefficient does not enter in the definition of the norms for the displacement and the temperature. Morevoer, it is interesting to notice that for these fields we observe $\ell+1$ order of accuracy for the error in $L^2$-norm.

\begin{figure}[H]
\centering
\begin{subfigure}[b]{1\textwidth}
\begin{subfigure}[b]{0.49\textwidth}
\begin{tikzpicture}
\begin{axis}[%
width=0.65\textwidth,
height=0.5\textwidth,
at={(0\textwidth,0\textwidth)},
scale only axis,
xmode=log,
xmin=4,
xmax=75,
xminorticks=true,
xlabel={$1/h$},
ymode=log,
ymin=1e-7,
ymax=5e-4,
yminorticks=true,
ylabel={$L^2$-Errors ($\mathbf{u}_h$)},
legend style={draw=none,fill=none,legend cell align=left},
legend pos=south west
]
\addplot [color=myyellow,solid,line width=1.5pt, mark=diamond*,mark options={color=myyellow}]
  table[row sep=crcr]{
5.5214   0.00031584 \\
9.7539   5.2991e-05 \\
17.5835   8.4934e-06 \\
30.4349   1.3342e-06 \\
55.1835   2.2999e-07 \\
};
\addlegendentry{$\mathcal{C}^{\text{old}}_h$}

\addplot [color=myblue,solid,line width=1.5pt,mark=square*,mark options={color=myblue}]
  table[row sep=crcr]{
5.5214   0.00031584 \\
9.7539   5.2991e-05 \\
17.5835   8.4934e-06 \\
30.4349   1.3342e-06 \\
55.1835   2.2999e-07 \\
};
\addlegendentry{$\mathcal{C}^{\text{vol}}_h$}

\addplot [color=mygreen,solid,line width=1.5pt,mark=triangle*,mark options={color=mygreen}]
  table[row sep=crcr]{
5.5214   0.00031584 \\
9.7539   5.2991e-05 \\
17.5835   8.4934e-06 \\
30.4349   1.3342e-06 \\
55.1835   2.2999e-07 \\
};
\addlegendentry{$\mathcal{C}_h$}

\addplot [color=myred,solid,line width=1.5pt,mark=triangle*,mark options={color=myred}]
  table[row sep=crcr]{
5.5214   0.00031584 \\
9.7539   5.2991e-05 \\
17.5835   8.4934e-06 \\
30.4349   1.3342e-06 \\
55.1835   2.2999e-07 \\  
};
\addlegendentry{$\mathcal{C}^{\text{stab}}_h$}

\addplot [color=black,solid,line width=0.5pt]
  table[row sep=crcr]{
    30.4349      2.9805e-06 \\
    55.1835      2.9805e-06 \\
    55.1835      5e-7 \\
    30.4349      2.9805e-06 \\  
};
\node[right, align=left, text=black, font=\footnotesize]
at (axis cs:55.1835, 1.7402e-06) {3}; 

\end{axis}
\end{tikzpicture}
\end{subfigure}
\begin{subfigure}[b]{0.49\textwidth}
\begin{tikzpicture}
\begin{axis}[%
width=0.65\textwidth,
height=0.5\textwidth,
at={(0\textwidth,0\textwidth)},
scale only axis,
xmode=log,
xmin=4,
xmax=75,
xminorticks=true,
xlabel={$1/h$},
ymode=log,
ymin=3e-4,
ymax=0.1,
yminorticks=true,
ylabel={$dG$-Errors ($\mathbf{u}_h$)},
legend style={draw=none,fill=none,legend cell align=left},
legend pos=south west
]
\addplot [color=myyellow,solid,line width=1.5pt, mark=diamond*,mark options={color=myyellow}]
  table[row sep=crcr]{
5.5214   0.067706 \\
9.7539   0.020397 \\
17.5835   0.0064744 \\
30.4349   0.0020609 \\
55.1835   0.00063986 \\
};
\addlegendentry{$\mathcal{C}^{\text{old}}_h$}

\addplot [color=myblue,solid,line width=1.5pt,mark=square*,mark options={color=myblue}]
  table[row sep=crcr]{
5.5214   0.067706 \\
9.7539   0.020397 \\
17.5835   0.0064744 \\
30.4349   0.0020609 \\
55.1835   0.00063986 \\
};
\addlegendentry{$\mathcal{C}^{\text{vol}}_h$}

\addplot [color=mygreen,solid,line width=1.5pt,mark=triangle*,mark options={color=mygreen}]
  table[row sep=crcr]{
5.5214   0.067706 \\
9.7539   0.020397 \\
17.5835   0.0064744 \\
30.4349   0.0020609 \\
55.1835   0.00063986 \\
};
\addlegendentry{$\mathcal{C}_h$}

\addplot [color=myred,solid,line width=1.5pt,mark=triangle*,mark options={color=myred}]
  table[row sep=crcr]{
5.5214   0.067706 \\
9.7539   0.020397 \\
17.5835   0.0064744 \\
30.4349   0.0020609 \\
55.1835   0.00063986 \\
};
\addlegendentry{$\mathcal{C}^{\text{stab}}_h$}

\addplot [color=black,solid,line width=0.5pt]
  table[row sep=crcr]{
    30.4349      0.003 \\
    55.1835      0.003 \\
    55.1835      9e-4 \\
    30.4349      0.003 \\  
};
\node[right, align=left, text=black, font=\footnotesize]
at (axis cs:55.1835, 0.0019) {2};

\end{axis}
\end{tikzpicture}
\end{subfigure}
\caption{Robustness test vs $\mathbf{K} = k \mathbf{I}$: computed errors for the displacement $\mathbf{u}_h$.}
\label{fig:RobQSTPE_RobKappaErrors_u_P2}
\end{subfigure}
\vspace{0.6cm}

\begin{subfigure}[b]{1\textwidth}
\centering
\begin{subfigure}[b]{0.49\textwidth}
\begin{tikzpicture}
\begin{axis}[%
width=0.65\textwidth,
height=0.5\textwidth,
at={(0\textwidth,0\textwidth)},
scale only axis,
xmode=log,
xmin=4,
xmax=75,
xminorticks=true,
xlabel={$1/h$},
ymode=log,
ymin=1e-7,
ymax=6e-4,
yminorticks=true,
ylabel={$L^2$-Errors ($T_h$)},
legend style={draw=none,fill=none,legend cell align=left},
legend pos=south west
]
\addplot [color=myyellow,solid,line width=1.5pt, mark=diamond*,mark options={color=myyellow}]
  table[row sep=crcr]{
5.5214   0.00039281 \\
9.7539   4.7998e-05 \\
17.5835   7.3684e-06 \\
30.4349   1.1819e-06 \\
55.1835   1.9126e-07 \\
};
\addlegendentry{$\mathcal{C}^{\text{old}}_h$}

\addplot [color=myblue,solid,line width=1.5pt,mark=square*,mark options={color=myblue}]
  table[row sep=crcr]{
5.5214   0.00039281 \\
9.7539   4.7998e-05 \\
17.5835   7.3684e-06 \\
30.4349   1.1819e-06 \\
55.1835   1.9126e-07 \\
};
\addlegendentry{$\mathcal{C}^{\text{vol}}_h$}

\addplot [color=mygreen,solid,line width=1.5pt,mark=triangle*,mark options={color=mygreen}]
  table[row sep=crcr]{
5.5214   0.00039281 \\
9.7539   4.7998e-05 \\
17.5835   7.3684e-06 \\
30.4349   1.1819e-06 \\
55.1835   1.9126e-07 \\
};
\addlegendentry{$\mathcal{C}_h$}

\addplot [color=myred,solid,line width=1.5pt,mark=triangle*,mark options={color=myred}]
  table[row sep=crcr]{
5.5214   0.00039281 \\
9.7539   4.7998e-05 \\
17.5835   7.3684e-06 \\
30.4349   1.1819e-06 \\
55.1835   1.9126e-07 \\
};
\addlegendentry{$\mathcal{C}^{\text{stab}}_h$}

\addplot [color=black,solid,line width=0.5pt]
  table[row sep=crcr]{
    30.4349      1.7883e-06 \\
    55.1835      1.7883e-06 \\
    55.1835      3e-7 \\
    30.4349      1.7883e-06 \\  
};
\node[right, align=left, text=black, font=\footnotesize]
at (axis cs:55.1835, 1.0441e-06) {3}; 

\end{axis}
\end{tikzpicture}
\end{subfigure}
\begin{subfigure}[b]{0.49\textwidth}
\begin{tikzpicture}
\begin{axis}[%
width=0.65\textwidth,
height=0.5\textwidth,
at={(0\textwidth,0\textwidth)},
scale only axis,
xmode=log,
xmin=4,
xmax=75,
xminorticks=true,
xlabel={$1/h$},
ymode=log,
ymin=2e-4,
ymax=0.05,
yminorticks=true,
ylabel={$dG$-Errors ($T_h$)},
legend style={draw=none,fill=none,legend cell align=left},
legend pos=south west
]
\addplot [color=myyellow,solid,line width=1.5pt, mark=diamond*,mark options={color=myyellow}]
  table[row sep=crcr]{
5.5214   0.037435 \\
9.7539   0.011988 \\
17.5835   0.0034051 \\
30.4349   0.0011237 \\
55.1835   0.00033801 \\
};
\addlegendentry{$\mathcal{C}^{\text{old}}_h$}

\addplot [color=myblue,solid,line width=1.5pt,mark=square*,mark options={color=myblue}]
  table[row sep=crcr]{
5.5214   0.037435 \\
9.7539   0.011988 \\
17.5835   0.0034051 \\
30.4349   0.0011237 \\
55.1835   0.00033801 \\
};
\addlegendentry{$\mathcal{C}^{\text{vol}}_h$}

\addplot [color=mygreen,solid,line width=1.5pt,mark=triangle*,mark options={color=mygreen}]
  table[row sep=crcr]{
5.5214   0.037435 \\
9.7539   0.011988 \\
17.5835   0.0034051 \\
30.4349   0.0011237 \\
55.1835   0.00033801 \\
};
\addlegendentry{$\mathcal{C}_h$}

\addplot [color=myred,solid,line width=1.5pt,mark=triangle*,mark options={color=myred}]
  table[row sep=crcr]{
5.5214   0.037435 \\
9.7539   0.011988 \\
17.5835   0.0034051 \\
30.4349   0.0011237 \\
55.1835   0.00033801 \\
};
\addlegendentry{$\mathcal{C}^{\text{stab}}_h$}

\addplot [color=black,solid,line width=0.5pt]
  table[row sep=crcr]{
    30.4349      0.0016 \\
    55.1835      0.0016 \\
    55.1835      5e-4 \\
    30.4349      0.0016 \\  
};
\node[right, align=left, text=black, font=\footnotesize]
at (axis cs:55.1835, 0.0011) {2}; 

\end{axis}
\end{tikzpicture}
\end{subfigure}
\caption{Robustness test vs $\mathbf{K} = k \mathbf{I}$: computed errors for the temperature $T_h$.}
\label{fig:RobQSTPE_RobKappaErrors_T_P2}
\end{subfigure}
\caption{Test case of Section 5.2.2 (robustness w.r.t. degenerating permeability): computed errors in $L^2$-norm (left column) and $dG$-norms (right column) versus $1/h$ (\textit{log-log} scale). The different colors represent the different linearization schemes. The polynomial degree of approximation is $\ell = 2$.}
\label{fig:RobQSTPE_RobKappaErrors_P2_Appendix}
\end{figure}

\subsection{Robustness with respect to degenerating thermal conductivity and permeability}
In this section we present the results of the tests presented in Section 5.2.3 for the displacement $\mathbf{u}_h$ in the case of degenerating permeability and thermal conductivity. We recall that we consider the following values for the model's coefficients: $\mathbf{K} = \num[exponent-product=\ensuremath{\cdot}, print-unity-mantissa=false]{1e-10}  \mathbf{I}$, $\boldsymbol{\Theta} = \num[exponent-product=\ensuremath{\cdot}, print-unity-mantissa=false]{1e-10}  \mathbf{I}$, and $a_0 = b_0 = c_0 = 0.01$. The other physical parameters are taken as in Section 5.1. We take $\ell = 2$ as polynomial degree of approximation and a sequence of Voronoi meshes with $N = [100, 310, 1000, 3100, 10000]$ elements.

Similarly to the previous Section, we observe that by using $\ell$ as polynomial degree of approximation for the four variables, we get $h^{\ell}$ and $h^{\ell+1}$ error decay for the $dG$- and $L^2$-errors of the displacement, respectively. 

\begin{figure}[H]
\centering
\begin{subfigure}[b]{0.49\textwidth}
\begin{tikzpicture}
\begin{axis}[%
width=0.65\textwidth,
height=0.5\textwidth,
at={(0\textwidth,0\textwidth)},
scale only axis,
xmode=log,
xmin=4,
xmax=75,
xminorticks=true,
xlabel={$1/h$},
ymode=log,
ymin=1e-7,
ymax=5e-4,
yminorticks=true,
ylabel={$L^2$-Errors},
legend style={draw=none,fill=none,legend cell align=left},
legend pos=south west
]
\addplot [color=myyellow,solid,line width=1.5pt, mark=diamond*,mark options={color=myyellow}]
  table[row sep=crcr]{
5.5214   0.00031584 \\
9.7539   5.2991e-05 \\
17.5835  8.4934e-06 \\
30.4349  1.3342e-06 \\
55.1835  2.2999e-07 \\
};
\addlegendentry{$\mathcal{C}^{\text{old}}_h$}

\addplot [color=myblue,solid,line width=1.5pt,mark=square*,mark options={color=myblue}]
  table[row sep=crcr]{
5.5214   0.00031584 \\
9.7539   5.2991e-05 \\
17.5835  8.4934e-06 \\
30.4349  1.3342e-06 \\
55.1835  2.2999e-07 \\
};
\addlegendentry{$\mathcal{C}^{\text{vol}}_h$}

\addplot [color=mygreen,solid,line width=1.5pt,mark=triangle*,mark options={color=mygreen}]
  table[row sep=crcr]{
5.5214   0.00031584 \\
9.7539   5.2991e-05 \\
17.5835  8.4934e-06 \\
30.4349  1.3342e-06 \\
55.1835  2.2999e-07 \\
};
\addlegendentry{$\mathcal{C}_h$}

\addplot [color=myred,solid,line width=1.5pt,mark=triangle*,mark options={color=myred}]
  table[row sep=crcr]{
5.5214   0.00031584 \\
9.7539   5.2991e-05 \\
17.5835  8.4934e-06 \\
30.4349  1.3342e-06 \\
55.1835  2.2999e-07 \\
};
\addlegendentry{$\mathcal{C}^{\text{stab}}_h$}

\addplot [color=black,solid,line width=0.5pt]
  table[row sep=crcr]{
    30.4349      2.3844e-06 \\
    55.1835      2.3844e-06 \\
    55.1835      4e-7 \\
    30.4349      2.3844e-06 \\  
};
\node[right, align=left, text=black, font=\footnotesize]
at (axis cs:55.1835, 1.3922e-06) {3}; 

\end{axis}
\end{tikzpicture}
\end{subfigure}
\begin{subfigure}[b]{0.49\textwidth}
\begin{tikzpicture}
\begin{axis}[%
width=0.65\textwidth,
height=0.5\textwidth,
at={(0\textwidth,0\textwidth)},
scale only axis,
xmode=log,
xmin=4,
xmax=75,
xminorticks=true,
xlabel={$1/h$},
ymode=log,
ymin=4e-4,
ymax=0.09,
yminorticks=true,
ylabel={$dG$-Errors},
legend style={draw=none,fill=none,legend cell align=left},
legend pos=south west
]
\addplot [color=myyellow,solid,line width=1.5pt, mark=diamond*,mark options={color=myyellow}]
  table[row sep=crcr]{
5.5214   0.067706 \\
9.7539   0.020397 \\
17.5835  0.0064744 \\
30.4349  0.0020609 \\
55.1835  0.00063986 \\
};
\addlegendentry{$\mathcal{C}^{\text{old}}_h$}

\addplot [color=myblue,solid,line width=1.5pt,mark=square*,mark options={color=myblue}]
  table[row sep=crcr]{
5.5214   0.067706 \\
9.7539   0.020397 \\
17.5835  0.0064744 \\
30.4349  0.0020609 \\
55.1835  0.00063986 \\
};
\addlegendentry{$\mathcal{C}^{\text{vol}}_h$}

\addplot [color=mygreen,solid,line width=1.5pt,mark=triangle*,mark options={color=mygreen}]
  table[row sep=crcr]{
5.5214   0.067706 \\
9.7539   0.020397 \\
17.5835  0.0064744 \\
30.4349  0.0020609 \\
55.1835  0.00063986 \\
};
\addlegendentry{$\mathcal{C}_h$}

\addplot [color=myred,solid,line width=1.5pt,mark=triangle*,mark options={color=myred}]
  table[row sep=crcr]{
5.5214   0.067706 \\
9.7539   0.020397 \\
17.5835  0.0064744 \\
30.4349  0.0020609 \\
55.1835  0.00063986 \\
};
\addlegendentry{$\mathcal{C}^{\text{stab}}_h$}

\addplot [color=black,solid,line width=0.5pt]
  table[row sep=crcr]{
    30.4349      0.0026 \\
    55.1835      0.0026 \\
    55.1835      8e-4 \\
    30.4349      0.0026 \\  
};
\node[right, align=left, text=black, font=\footnotesize]
at (axis cs:55.1835, 0.0017) {2}; 

\end{axis}
\end{tikzpicture}
\end{subfigure}
\caption{Test case of Section 5.2.3 (robustness w.r.t. degenerating permeability and thermal conductivity): computed errors for the displacement $\mathbf{u}_h$ in $L^2$-norm (left) and $dG$-norm (right) versus $1/h$ (\textit{log-log} scale). The different colors represent the different linearization schemes. The polynomial degree of approximation is $\ell = 2$.}
\label{fig:RobQSTPE_RobThetaKappaErrors_u_P2}
\end{figure}
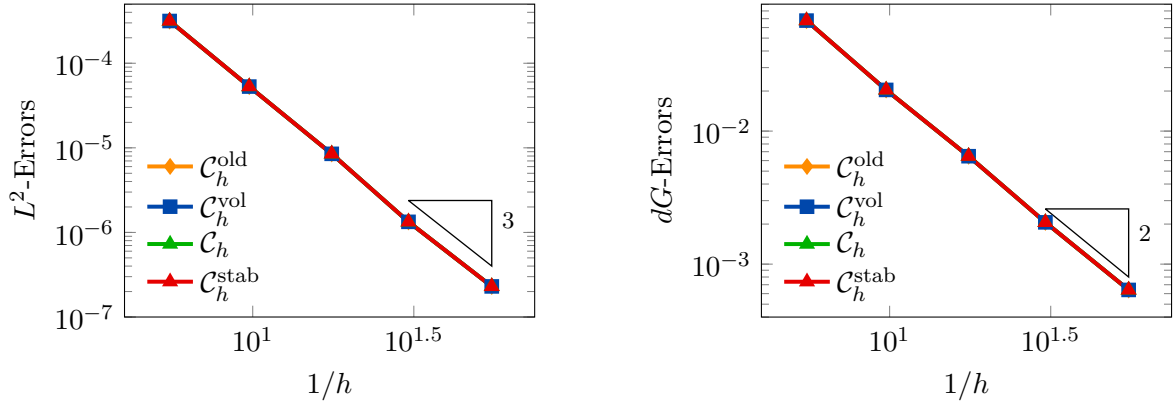

\section{Robustness tests: error tables}
\label{sec:RobQSTPE_rob_test_appendix_table_errors}

In this suppplementary material, we report the tables of the numerical values of the errors for the test presented in Section 5.2. We refer the reader to Section 5.2, Section~\ref{sec:RobQSTPE_rob_test_appendix_further_results} for the comments on the results.

\begin{table}[!h]
	\centering 
	\footnotesize
	\begin{tabular}{c | l | c | c | c | c | c | c}
		\textbf{Lin.} & \textbf{Error} & $\theta = 1$ & $\theta = \num[exponent-product=\ensuremath{\cdot}, print-unity-mantissa=false]{1e-2}$ & $\theta = \num[exponent-product=\ensuremath{\cdot}, print-unity-mantissa=false]{1e-4}$ & $\theta =  \num[exponent-product=\ensuremath{\cdot}, print-unity-mantissa=false]{1e-6}$ & $\theta = \num[exponent-product=\ensuremath{\cdot}, print-unity-mantissa=false]{1e-8}$ & $\theta = \num[exponent-product=\ensuremath{\cdot}, print-unity-mantissa=false]{1e-10}$ \T\B \\
		\hline
		\multirow{6}{*}{{$\mathcal{C}_h^{\text{old}}$}}
		& $||\mathbf{e}^{u}||_{L^2}$  & \cellcolor{green!30}\num[exponent-product=\ensuremath{\cdot}]{8.564e-06} & 1.587 & 7.460 & 1.741 & 2.208 & 1.877 \T\B \\
		& $||\mathbf{e}^{u}||_{dG}$  & \cellcolor{green!30}0.007 & 12.457 & \num[exponent-product=\ensuremath{\cdot}]{1.187e+3} & 70.483 & 112.891 & 96.352 \T\B \\
		& $||e^{p}||_{L^2}$  & \cellcolor{green!30}\num[exponent-product=\ensuremath{\cdot}]{7.097e-06} & 0.160 & 0.181 & 0.169 & 0.168 & 0.169 \T\B \\
		& $||e^{p}||_{dG}$  & 0.004 & 0.896 & 5.070 & 0.959 & 0.977 & 1.225 \T\B \\
		& $||e^{T}||_{L^2}$  & \cellcolor{green!30}\num[exponent-product=\ensuremath{\cdot}]{7.291e-06} & 199.905 & \num[exponent-product=\ensuremath{\cdot}]{1.449e+4} & \num[exponent-product=\ensuremath{\cdot}]{3.925e+3} & \num[exponent-product=\ensuremath{\cdot}]{2.357e+3} & \num[exponent-product=\ensuremath{\cdot}]{8.567e+5} \T\B \\
		& $||e^{T}||_{dG}$  & \cellcolor{green!30}0.003 & 169.448 & \num[exponent-product=\ensuremath{\cdot}]{9.173e+4} & \num[exponent-product=\ensuremath{\cdot}]{1.843e+3} & 272.656 & 825.833 \T\B \\
		\hline
		\multirow{6}{*}{{$\mathcal{C}_h^{\text{vol}}$}}
		& $||\mathbf{e}^{u}||_{L^2}$  & \cellcolor{green!30}\num[exponent-product=\ensuremath{\cdot}]{8.564e-06} & \cellcolor{green!30}\num[exponent-product=\ensuremath{\cdot}]{1.035e-05} & \cellcolor{green!30}\num[exponent-product=\ensuremath{\cdot}]{5.197e-05} & 0.005 & 0.033 & 0.074 \T\B \\
		& $||\mathbf{e}^{u}||_{dG}$  & \cellcolor{green!30}0.007 & \cellcolor{green!30}0.007 & \cellcolor{green!30}0.007 & 0.072 & 0.244 & 0.678 \T\B \\
		& $||e^{p}||_{L^2}$  & \cellcolor{green!30}\num[exponent-product=\ensuremath{\cdot}]{7.097e-06} & \num[exponent-product=\ensuremath{\cdot}]{7.068e-06} & \cellcolor{green!30}\num[exponent-product=\ensuremath{\cdot}]{1.043e-05} & \num[exponent-product=\ensuremath{\cdot}]{4.477e-4} & 0.001 & 0.003 \T\B \\
		& $||e^{p}||_{dG}$  & \cellcolor{green!30}0.003 & \cellcolor{green!30}0.004 & \cellcolor{green!30}0.004 & 0.005 & 0.011 & 0.026 \T\B \\
		& $||e^{T}||_{L^2}$  & \cellcolor{green!30}\num[exponent-product=\ensuremath{\cdot}]{7.291e-06} & \cellcolor{green!30}0.001 & \cellcolor{green!30}0.009 & 0.500 & 4.244 & 56.313 \T\B \\
		& $||e^{T}||_{dG}$  & \cellcolor{green!30}0.003 & \cellcolor{green!30}0.001 & \cellcolor{green!30}0.023 & 0.202 & 0.048 & 0.055 \T\B \\
		\hline
		\multirow{6}{*}{{$\mathcal{C}_h$}}
		& $||\mathbf{e}^{u}||_{L^2}$  & \cellcolor{green!30}\num[exponent-product=\ensuremath{\cdot}]{8.564e-06} & \num[exponent-product=\ensuremath{\cdot}]{1.053e-05} & \num[exponent-product=\ensuremath{\cdot}]{4.749e-4} & 0.001 & \num[exponent-product=\ensuremath{\cdot}]{8.175e-4} & \num[exponent-product=\ensuremath{\cdot}]{5.036e-4} \T\B \\
		& $||\mathbf{e}^{u}||_{dG}$  & \cellcolor{green!30}0.007 & \cellcolor{green!30}0.007 & 0.011 & \cellcolor{green!30}0.010 & 0.011 & \cellcolor{green!30}0.008 \T\B \\
		& $||e^{p}||_{L^2}$  & \cellcolor{green!30}\num[exponent-product=\ensuremath{\cdot}]{7.097e-06} & \cellcolor{green!30}\num[exponent-product=\ensuremath{\cdot}]{7.067e-06} & \num[exponent-product=\ensuremath{\cdot}]{3.718e-05} & \num[exponent-product=\ensuremath{\cdot}]{6.262e-05} & \num[exponent-product=\ensuremath{\cdot}]{6.422e-05} & \num[exponent-product=\ensuremath{\cdot}]{3.427e-05} \T\B \\
		& $||e^{p}||_{dG}$  & 0.004 & \cellcolor{green!30}0.004 & \cellcolor{green!30}0.004 & \cellcolor{green!30}0.004 & \cellcolor{green!30}0.004 & \cellcolor{green!30}0.004 \T\B \\
		& $||e^{T}||_{L^2}$  & \num[exponent-product=\ensuremath{\cdot}]{7.296e-06} & \cellcolor{green!30}0.001 & 0.405 & \cellcolor{green!30}0.095 & \cellcolor{green!30}0.163 & 8.795 \T\B \\
		& $||e^{T}||_{dG}$  & \cellcolor{green!30}0.003 & 0.002 & 0.262 & \cellcolor{green!30}0.008 & \cellcolor{green!30}0.002 & 0.008 \T\B \\
		\hline
		\multirow{6}{*}{{$\mathcal{C}_h^{\text{stab}}$}}
		& $||\mathbf{e}^{u}||_{L^2}$  & \cellcolor{green!30}\num[exponent-product=\ensuremath{\cdot}]{8.564e-06} & \num[exponent-product=\ensuremath{\cdot}]{1.057e-05} & \num[exponent-product=\ensuremath{\cdot}]{2.090e-4} & \cellcolor{green!30}\num[exponent-product=\ensuremath{\cdot}]{1.717e-4} & \cellcolor{green!30}\num[exponent-product=\ensuremath{\cdot}]{2.736e-05} & \cellcolor{green!30}\num[exponent-product=\ensuremath{\cdot}]{2.290e-05} \T\B \\
		& $||\mathbf{e}^{u}||_{dG}$  & \cellcolor{green!30}0.007 & \cellcolor{green!30}0.007 & 0.008 & 0.011 & \cellcolor{green!30}0.010 & 0.010 \T\B \\
		& $||e^{p}||_{L^2}$  & \cellcolor{green!30}\num[exponent-product=\ensuremath{\cdot}]{7.097e-06} & \cellcolor{green!30}\num[exponent-product=\ensuremath{\cdot}]{7.067e-06} & \num[exponent-product=\ensuremath{\cdot}]{1.784e-05} & \num[exponent-product=\ensuremath{\cdot}]{1.533e-05} & \cellcolor{green!30}\num[exponent-product=\ensuremath{\cdot}]{7.408e-06} & \cellcolor{green!30}\num[exponent-product=\ensuremath{\cdot}]{7.137e-06} \T\B \\
		& $||e^{p}||_{dG}$  & 0.004 & \cellcolor{green!30}0.004 & \cellcolor{green!30}0.004 & \cellcolor{green!30}0.004 & \cellcolor{green!30}0.004 & \cellcolor{green!30}0.004 \T\B \\
		& $||e^{T}||_{L^2}$  & \num[exponent-product=\ensuremath{\cdot}]{7.297e-06} & \cellcolor{green!30}0.001 & 0.179 & 0.612 & 0.759 & \cellcolor{green!30}0.777 \T\B \\
		& $||e^{T}||_{dG}$  & \cellcolor{green!30}0.003 & 0.002 & 0.086 & 0.034 & 0.005 & \cellcolor{green!30}\num[exponent-product=\ensuremath{\cdot}]{5.752e-4} \T\B \\
	\end{tabular}
	\caption{Robustness test vs $\boldsymbol{\Theta} = \theta \mathbf{I}$ Section 5.2.1: $L^2$- and $dG$-errors of $\mathbf{u}_h$, $p_h$, and $T_h$ versus $\theta$. The results for the four different choices of the linearized form are reported. The polynomial degree of approximation and the number of elements are taken as $\ell = 2$ and $N=1000$. In green we highlight the lowest error (for each field and each norm) between the four linearization schemes.}
	\label{tab:RobQSTPE_RobThetaErrors_p2}
\end{table}

\begin{table}
	\centering 
	\footnotesize
	\begin{tabular}{c | l | c | c | c | c | c | c}
		\textbf{Lin.} & \textbf{Error} & $\theta = 1$ & $\theta = \num[exponent-product=\ensuremath{\cdot}, print-unity-mantissa=false]{1e-2}$ & $\theta = \num[exponent-product=\ensuremath{\cdot}, print-unity-mantissa=false]{1e-4}$ & $\theta =  \num[exponent-product=\ensuremath{\cdot}, print-unity-mantissa=false]{1e-6}$ & $\theta = \num[exponent-product=\ensuremath{\cdot}, print-unity-mantissa=false]{1e-8}$ & $\theta = \num[exponent-product=\ensuremath{\cdot}, print-unity-mantissa=false]{1e-10}$ \T\B \\
		\hline
		\multirow{6}{*}{{$\mathcal{C}_h^{\text{old}}$}}
		& $||\mathbf{e}^{u}||_{L^2}$  & \cellcolor{green!30}\num[exponent-product=\ensuremath{\cdot}]{9.793e-07} & 1.588 & 2.174 & 21.127 & 3.656 & 4.391 \T\B \\
		& $||\mathbf{e}^{u}||_{dG}$  & \cellcolor{green!30}\num[exponent-product=\ensuremath{\cdot}]{5.537e-4} & 14.515 & 186.758 & \num[exponent-product=\ensuremath{\cdot}]{1.824e+3} & 207.322 & 142.031 \T\B \\
		& $||e^{p}||_{L^2}$  & \cellcolor{green!30}\cellcolor{green!30}\num[exponent-product=\ensuremath{\cdot}]{1.416e-06} & 0.155 & 0.170 & 0.171 & 0.168 & 0.169 \T\B \\
		& $||e^{p}||_{dG}$  & \cellcolor{green!30}\num[exponent-product=\ensuremath{\cdot}]{6.166e-4} & 0.965 & 1.271 & 3.144 & 1.014 & 1.033 \T\B \\
		& $||e^{T}||_{L^2}$  & \cellcolor{green!30}\num[exponent-product=\ensuremath{\cdot}]{1.414e-06} & 201.896 & \num[exponent-product=\ensuremath{\cdot}]{2.632e+3} & \num[exponent-product=\ensuremath{\cdot}]{2.198e+4} & \num[exponent-product=\ensuremath{\cdot}]{3.631e+4} & \num[exponent-product=\ensuremath{\cdot}]{4.073e+6} \T\B \\
		& $||e^{T}||_{dG}$  & \cellcolor{green!30}\num[exponent-product=\ensuremath{\cdot}]{6.006e-4} & 210.367 & \num[exponent-product=\ensuremath{\cdot}]{7.485e+3} & \num[exponent-product=\ensuremath{\cdot}]{1.650e+4} & 543.160 & \num[exponent-product=\ensuremath{\cdot}]{4.402e+3} \T\B \\
		\hline
		\multirow{6}{*}{{$\mathcal{C}_h^{\text{vol}}$}}
		& $||\mathbf{e}^{u}||_{L^2}$  & \cellcolor{green!30}\num[exponent-product=\ensuremath{\cdot}]{9.793e-07} & \num[exponent-product=\ensuremath{\cdot}]{9.797e-07} & \cellcolor{green!30}\num[exponent-product=\ensuremath{\cdot}]{1.415e-06} & \num[exponent-product=\ensuremath{\cdot}]{3.791e-05} & 0.005 & 0.020 \T\B \\
		& $||\mathbf{e}^{u}||_{dG}$  & \cellcolor{green!30}\num[exponent-product=\ensuremath{\cdot}]{5.537e-4} & \cellcolor{green!30}\num[exponent-product=\ensuremath{\cdot}]{5.537e-4} & \cellcolor{green!30}\num[exponent-product=\ensuremath{\cdot}]{5.540e-4} & 0.002 & 0.084 & 0.332 \T\B \\
		& $||e^{p}||_{L^2}$  & \cellcolor{green!30}\num[exponent-product=\ensuremath{\cdot}]{1.416e-06} & \cellcolor{green!30}\num[exponent-product=\ensuremath{\cdot}]{1.416e-06} & \cellcolor{green!30}\num[exponent-product=\ensuremath{\cdot}]{1.418e-06} & \num[exponent-product=\ensuremath{\cdot}]{1.806e-06} & \num[exponent-product=\ensuremath{\cdot}]{3.176e-4} & 0.001 \T\B \\
		& $||e^{p}||_{dG}$  & \cellcolor{green!30}\num[exponent-product=\ensuremath{\cdot}]{6.166e-4} & \cellcolor{green!30}\num[exponent-product=\ensuremath{\cdot}]{6.166e-4} & \cellcolor{green!30}\num[exponent-product=\ensuremath{\cdot}]{6.166e-4} & \num[exponent-product=\ensuremath{\cdot}]{6.171e-4} & 0.003 & 0.010 \T\B \\
		& $||e^{T}||_{L^2}$  & \cellcolor{green!30}\num[exponent-product=\ensuremath{\cdot}]{1.414e-06} & \cellcolor{green!30}\num[exponent-product=\ensuremath{\cdot}]{4.816e-06} & \cellcolor{green!30}\num[exponent-product=\ensuremath{\cdot}]{1.164e-4} & 0.011 & 0.555 & 32.109 \T\B \\
		& $||e^{T}||_{dG}$  & \cellcolor{green!30}\cellcolor{green!30}\num[exponent-product=\ensuremath{\cdot}]{6.006e-4} & \num[exponent-product=\ensuremath{\cdot}]{6.057e-05} & \cellcolor{green!30}\num[exponent-product=\ensuremath{\cdot}]{2.578e-4} & 0.004 & 0.022 & 0.036 \T\B \\
		\hline
		\multirow{6}{*}{{$\mathcal{C}_h$}}
		& $||\mathbf{e}^{u}||_{L^2}$  & \cellcolor{green!30}\num[exponent-product=\ensuremath{\cdot}]{9.793e-07} & \cellcolor{green!30}\num[exponent-product=\ensuremath{\cdot}]{9.796e-07} & \num[exponent-product=\ensuremath{\cdot}]{1.699e-05} & \num[exponent-product=\ensuremath{\cdot}]{2.869e-05} & \num[exponent-product=\ensuremath{\cdot}]{5.618e-4} & \num[exponent-product=\ensuremath{\cdot}]{5.097e-4} \T\B \\
		& $||\mathbf{e}^{u}||_{dG}$  & \cellcolor{green!30}\num[exponent-product=\ensuremath{\cdot}]{5.537e-4} & \cellcolor{green!30}\num[exponent-product=\ensuremath{\cdot}]{5.537e-4} & \num[exponent-product=\ensuremath{\cdot}]{6.202e-4} & \num[exponent-product=\ensuremath{\cdot}]{5.790e-4} & 0.003 & 0.003 \T\B \\
		& $||e^{p}||_{L^2}$  & \cellcolor{green!30}\num[exponent-product=\ensuremath{\cdot}]{1.416e-06} & \cellcolor{green!30}\num[exponent-product=\ensuremath{\cdot}]{1.416e-06} & \num[exponent-product=\ensuremath{\cdot}]{1.895e-06} & \num[exponent-product=\ensuremath{\cdot}]{2.262e-06} & \num[exponent-product=\ensuremath{\cdot}]{3.473e-05} & \num[exponent-product=\ensuremath{\cdot}]{3.152e-05} \T\B \\
		& $||e^{p}||_{dG}$  & \cellcolor{green!30}\num[exponent-product=\ensuremath{\cdot}]{6.166e-4} & \cellcolor{green!30}\num[exponent-product=\ensuremath{\cdot}]{6.166e-4} & \cellcolor{green!30}\num[exponent-product=\ensuremath{\cdot}]{6.166e-4} & \num[exponent-product=\ensuremath{\cdot}]{6.168e-4} & \num[exponent-product=\ensuremath{\cdot}]{6.639e-4} & \num[exponent-product=\ensuremath{\cdot}]{6.558e-4} \T\B \\
		& $||e^{T}||_{L^2}$  & \cellcolor{green!30}\num[exponent-product=\ensuremath{\cdot}]{1.414e-06} & \cellcolor{green!30}\num[exponent-product=\ensuremath{\cdot}]{4.316e-06} & 0.014 & \cellcolor{green!30}0.001 & 0.225 & 19.316 \T\B \\
		& $||e^{T}||_{dG}$  & \cellcolor{green!30}\cellcolor{green!30}\num[exponent-product=\ensuremath{\cdot}]{6.006e-4} & \cellcolor{green!30}\num[exponent-product=\ensuremath{\cdot}]{6.047e-05} & 0.010 & \cellcolor{green!30}\num[exponent-product=\ensuremath{\cdot}]{9.346e-05} & 0.002 & 0.021 \T\B \\
		\hline
		\multirow{6}{*}{{$\mathcal{C}_h^{\text{stab}}$}}
		& $||\mathbf{e}^{u}||_{L^2}$  & \cellcolor{green!30}\num[exponent-product=\ensuremath{\cdot}]{9.793e-07} & \num[exponent-product=\ensuremath{\cdot}]{9.797e-07} & \num[exponent-product=\ensuremath{\cdot}]{4.606e-06} & \cellcolor{green!30}\cellcolor{green!30}\num[exponent-product=\ensuremath{\cdot}]{5.238e-06} & \cellcolor{green!30}\num[exponent-product=\ensuremath{\cdot}]{5.127e-06} & \cellcolor{green!30}\num[exponent-product=\ensuremath{\cdot}]{5.143e-06} \T\B \\
		& $||\mathbf{e}^{u}||_{dG}$  & \cellcolor{green!30}\num[exponent-product=\ensuremath{\cdot}]{5.537e-4} & \cellcolor{green!30}\num[exponent-product=\ensuremath{\cdot}]{5.537e-4} & \num[exponent-product=\ensuremath{\cdot}]{5.567e-4} & \cellcolor{green!30}\num[exponent-product=\ensuremath{\cdot}]{5.628e-4} & \cellcolor{green!30}\num[exponent-product=\ensuremath{\cdot}]{5.596e-4} & \cellcolor{green!30}\num[exponent-product=\ensuremath{\cdot}]{5.596e-4} \T\B \\
		& $||e^{p}||_{L^2}$  & \cellcolor{green!30}\num[exponent-product=\ensuremath{\cdot}]{1.416e-06} & \cellcolor{green!30}\num[exponent-product=\ensuremath{\cdot}]{1.416e-06} & \num[exponent-product=\ensuremath{\cdot}]{1.460e-06} & \cellcolor{green!30}\num[exponent-product=\ensuremath{\cdot}]{1.475e-06} & \cellcolor{green!30}\num[exponent-product=\ensuremath{\cdot}]{1.454e-06} & \cellcolor{green!30}\num[exponent-product=\ensuremath{\cdot}]{1.453e-06} \T\B \\
		& $||e^{p}||_{dG}$  & \cellcolor{green!30}\num[exponent-product=\ensuremath{\cdot}]{6.166e-4} & \cellcolor{green!30}\num[exponent-product=\ensuremath{\cdot}]{6.166e-4} & \cellcolor{green!30}\num[exponent-product=\ensuremath{\cdot}]{6.166e-4} & \cellcolor{green!30}\num[exponent-product=\ensuremath{\cdot}]{6.167e-4} & \cellcolor{green!30}\num[exponent-product=\ensuremath{\cdot}]{6.166e-4} & \cellcolor{green!30}\num[exponent-product=\ensuremath{\cdot}]{6.166e-4} \T\B \\
		& $||e^{T}||_{L^2}$  & \cellcolor{green!30}\num[exponent-product=\ensuremath{\cdot}]{1.414e-06} & \num[exponent-product=\ensuremath{\cdot}]{4.850e-06} & 0.002 & 0.007 & \cellcolor{green!30}0.007 & \cellcolor{green!30}0.007 \T\B \\
		& $||e^{T}||_{dG}$  & \cellcolor{green!30}\num[exponent-product=\ensuremath{\cdot}]{6.006e-4} & \num[exponent-product=\ensuremath{\cdot}]{6.061e-05} & 0.001 & \num[exponent-product=\ensuremath{\cdot}]{4.356e-4} & \cellcolor{green!30}\num[exponent-product=\ensuremath{\cdot}]{5.351e-05} & \cellcolor{green!30}\num[exponent-product=\ensuremath{\cdot}]{5.452e-06} \T\B \\
	\end{tabular}
	\caption{Robustness test vs $\boldsymbol{\Theta} = \theta \mathbf{I}$ Section 5.2.1: $L^2$- and $dG$-errors of $\mathbf{u}_h$, $p_h$, and $T_h$ versus $\theta$. The results for the four different choices of the linearized form are reported. The polynomial degree of approximation and the number of elements are taken as $\ell = 3$ and $N=310$. In green we highlight the lowest error (for each field and each norm) between the four linearization schemes.}
	\label{tab:RobQSTPE_RobThetaErrors_p3}
\end{table}

\begin{table}
	\centering 
	\footnotesize
	\begin{tabular}{c | l | c | c | c | c | c | c}
		\textbf{Lin.} & \textbf{Error} & $\theta = 1$ & $\theta = \num[exponent-product=\ensuremath{\cdot}, print-unity-mantissa=false]{1e-2}$ & $\theta = \num[exponent-product=\ensuremath{\cdot}, print-unity-mantissa=false]{1e-4}$ & $\theta =  \num[exponent-product=\ensuremath{\cdot}, print-unity-mantissa=false]{1e-6}$ & $\theta = \num[exponent-product=\ensuremath{\cdot}, print-unity-mantissa=false]{1e-8}$ & $\theta = \num[exponent-product=\ensuremath{\cdot}, print-unity-mantissa=false]{1e-10}$ \T\B \\
		\hline
		\multirow{6}{*}{{$\mathcal{C}_h^{\text{old}}$}}
		& $||\mathbf{e}^{u}||_{L^2}$  & \cellcolor{green!30}\num[exponent-product=\ensuremath{\cdot}]{6.543e-07} & 1.590 & 4.635 & 1.975 & 17.557 & 78.719 \T\B \\
		& $||\mathbf{e}^{u}||_{dG}$  & \cellcolor{green!30}\num[exponent-product=\ensuremath{\cdot}]{3.019e-4} & 12.228 & 414.469 & 80.932 & 732.996 & \num[exponent-product=\ensuremath{\cdot}]{4.755e+3} \T\B \\
		& $||e^{p}||_{L^2}$  & \cellcolor{green!30}\num[exponent-product=\ensuremath{\cdot}]{5.327e-07} & 0.131 & 0.168 & 0.169 & 0.168 & 0.171 \T\B \\
		& $||e^{p}||_{dG}$  & \cellcolor{green!30}\num[exponent-product=\ensuremath{\cdot}]{1.713e-4} & 0.788 & 2.205 & 0.929 & 1.323 & 5.616 \T\B \\
		& $||e^{T}||_{L^2}$  & \cellcolor{green!30}\num[exponent-product=\ensuremath{\cdot}]{6.261e-07} & 189.214 & \num[exponent-product=\ensuremath{\cdot}]{3.869e+3} & \num[exponent-product=\ensuremath{\cdot}]{2.759e+3} & \num[exponent-product=\ensuremath{\cdot}]{5.005e+4} & \num[exponent-product=\ensuremath{\cdot}]{5.016e+6} \T\B \\
		& $||e^{T}||_{dG}$  & \cellcolor{green!30}\num[exponent-product=\ensuremath{\cdot}]{1.773e-4} & 290.314 & \num[exponent-product=\ensuremath{\cdot}]{1.507e+4} & 789.085 & 980.373 & \num[exponent-product=\ensuremath{\cdot}]{5.485e+3} \T\B \\
		\hline
		\multirow{6}{*}{{$\mathcal{C}_h^{\text{vol}}$}}
		& $||\mathbf{e}^{u}||_{L^2}$  & \cellcolor{green!30}\num[exponent-product=\ensuremath{\cdot}]{6.543e-07} & \cellcolor{green!30}\num[exponent-product=\ensuremath{\cdot}]{1.322e-06} & \num[exponent-product=\ensuremath{\cdot}]{7.914e-07} & \num[exponent-product=\ensuremath{\cdot}]{4.849e-4} & \num[exponent-product=\ensuremath{\cdot}]{2.780e-4} & 0.002 \T\B \\
		& $||\mathbf{e}^{u}||_{dG}$  & \cellcolor{green!30}\num[exponent-product=\ensuremath{\cdot}]{3.019e-4} & \cellcolor{green!30}\num[exponent-product=\ensuremath{\cdot}]{3.021e-4} & \cellcolor{green!30}\num[exponent-product=\ensuremath{\cdot}]{3.019e-4} & 0.006 & 0.004 & 0.031 \T\B \\
		& $||e^{p}||_{L^2}$  & \cellcolor{green!30}\num[exponent-product=\ensuremath{\cdot}]{5.327e-07} & \cellcolor{green!30}\num[exponent-product=\ensuremath{\cdot}]{5.450e-07} & \num[exponent-product=\ensuremath{\cdot}]{5.348e-07} & \num[exponent-product=\ensuremath{\cdot}]{2.601e-05} & \num[exponent-product=\ensuremath{\cdot}]{1.893e-05} & \num[exponent-product=\ensuremath{\cdot}]{1.535e-4} \T\B \\
		& $||e^{p}||_{dG}$  & \cellcolor{green!30}\num[exponent-product=\ensuremath{\cdot}]{1.713e-4} & \cellcolor{green!30}\num[exponent-product=\ensuremath{\cdot}]{1.713e-4} & \cellcolor{green!30}\num[exponent-product=\ensuremath{\cdot}]{1.713e-4} & \num[exponent-product=\ensuremath{\cdot}]{2.897e-4} & \num[exponent-product=\ensuremath{\cdot}]{2.480e-4} & 0.001 \T\B \\
		& $||e^{T}||_{L^2}$  & \cellcolor{green!30}\num[exponent-product=\ensuremath{\cdot}]{6.261e-07} & \cellcolor{green!30}\num[exponent-product=\ensuremath{\cdot}]{2.102e-4} & \num[exponent-product=\ensuremath{\cdot}]{5.627e-05} & 0.038 & 0.025 & 2.311 \T\B \\
		& $||e^{T}||_{dG}$  & \cellcolor{green!30}\num[exponent-product=\ensuremath{\cdot}]{1.773e-4} & \cellcolor{green!30}\num[exponent-product=\ensuremath{\cdot}]{2.877e-4} & \num[exponent-product=\ensuremath{\cdot}]{5.011e-05} & 0.011 & \num[exponent-product=\ensuremath{\cdot}]{6.357e-4} & 0.003 \T\B \\
		\hline
		\multirow{6}{*}{{$\mathcal{C}_h$}}
		& $||\mathbf{e}^{u}||_{L^2}$  & \cellcolor{green!30}\num[exponent-product=\ensuremath{\cdot}]{6.543e-07} & \num[exponent-product=\ensuremath{\cdot}]{2.018e-06} & \cellcolor{green!30}\num[exponent-product=\ensuremath{\cdot}]{6.610e-07} & \cellcolor{green!30}\num[exponent-product=\ensuremath{\cdot}]{7.920e-07} & \num[exponent-product=\ensuremath{\cdot}]{1.081e-06} & \num[exponent-product=\ensuremath{\cdot}]{1.083e-06} \T\B \\
		& $||\mathbf{e}^{u}||_{dG}$  & \cellcolor{green!30}\num[exponent-product=\ensuremath{\cdot}]{3.019e-4} & \num[exponent-product=\ensuremath{\cdot}]{3.025e-4} & \cellcolor{green!30}\num[exponent-product=\ensuremath{\cdot}]{3.019e-4} & \cellcolor{green!30}\num[exponent-product=\ensuremath{\cdot}]{3.018e-4} & \cellcolor{green!30}\num[exponent-product=\ensuremath{\cdot}]{3.018e-4} & \cellcolor{green!30}\num[exponent-product=\ensuremath{\cdot}]{3.018e-4} \T\B \\
		& $||e^{p}||_{L^2}$  & \cellcolor{green!30}\num[exponent-product=\ensuremath{\cdot}]{5.327e-07} & \num[exponent-product=\ensuremath{\cdot}]{5.660e-07} & \cellcolor{green!30}\num[exponent-product=\ensuremath{\cdot}]{5.327e-07} & \cellcolor{green!30}\num[exponent-product=\ensuremath{\cdot}]{5.333e-07} & \cellcolor{green!30}\num[exponent-product=\ensuremath{\cdot}]{5.349e-07} & \cellcolor{green!30}\num[exponent-product=\ensuremath{\cdot}]{5.349e-07} \T\B \\
		& $||e^{p}||_{dG}$  & \cellcolor{green!30}\num[exponent-product=\ensuremath{\cdot}]{1.713e-4} & \cellcolor{green!30}\num[exponent-product=\ensuremath{\cdot}]{1.713e-4} & \cellcolor{green!30}\num[exponent-product=\ensuremath{\cdot}]{1.713e-4} & \cellcolor{green!30}\num[exponent-product=\ensuremath{\cdot}]{1.713e-4} & \cellcolor{green!30}\num[exponent-product=\ensuremath{\cdot}]{1.713e-4} & \cellcolor{green!30}\num[exponent-product=\ensuremath{\cdot}]{1.713e-4} \T\B \\
		& $||e^{T}||_{L^2}$  & \cellcolor{green!30}\num[exponent-product=\ensuremath{\cdot}]{6.261e-07} & \num[exponent-product=\ensuremath{\cdot}]{3.493e-4} & \cellcolor{green!30}\num[exponent-product=\ensuremath{\cdot}]{1.376e-05} & \num[exponent-product=\ensuremath{\cdot}]{2.218e-4} & 0.004 & 0.416 \T\B \\
		& $||e^{T}||_{dG}$  & \cellcolor{green!30}\num[exponent-product=\ensuremath{\cdot}]{1.773e-4} & \num[exponent-product=\ensuremath{\cdot}]{4.769e-4} & \cellcolor{green!30}\num[exponent-product=\ensuremath{\cdot}]{1.134e-05} & \num[exponent-product=\ensuremath{\cdot}]{2.043e-05} & \num[exponent-product=\ensuremath{\cdot}]{4.699e-05} & \num[exponent-product=\ensuremath{\cdot}]{4.515e-4} \T\B \\
		\hline
		\multirow{6}{*}{{$\mathcal{C}_h^{\text{stab}}$}}
		& $||\mathbf{e}^{u}||_{L^2}$  & \cellcolor{green!30}\num[exponent-product=\ensuremath{\cdot}]{6.543e-07} & \num[exponent-product=\ensuremath{\cdot}]{1.984e-06} & \num[exponent-product=\ensuremath{\cdot}]{6.657e-07} & \num[exponent-product=\ensuremath{\cdot}]{9.170e-07} & \cellcolor{green!30}\num[exponent-product=\ensuremath{\cdot}]{1.057e-06} & \cellcolor{green!30}\num[exponent-product=\ensuremath{\cdot}]{1.059e-06} \T\B \\
		& $||\mathbf{e}^{u}||_{dG}$  & \cellcolor{green!30}\num[exponent-product=\ensuremath{\cdot}]{3.019e-4} & \num[exponent-product=\ensuremath{\cdot}]{3.025e-4} & \cellcolor{green!30}\num[exponent-product=\ensuremath{\cdot}]{3.019e-4} & \num[exponent-product=\ensuremath{\cdot}]{3.021e-4} & \num[exponent-product=\ensuremath{\cdot}]{3.021e-4} & \num[exponent-product=\ensuremath{\cdot}]{3.021e-4} \T\B \\
		& $||e^{p}||_{L^2}$  & \cellcolor{green!30}\num[exponent-product=\ensuremath{\cdot}]{5.327e-07} & \num[exponent-product=\ensuremath{\cdot}]{5.650e-07} & \cellcolor{green!30}\num[exponent-product=\ensuremath{\cdot}]{5.327e-07} & \num[exponent-product=\ensuremath{\cdot}]{5.344e-07} & \num[exponent-product=\ensuremath{\cdot}]{5.353e-07} & \num[exponent-product=\ensuremath{\cdot}]{5.353e-07} \T\B \\
		& $||e^{p}||_{dG}$  & \cellcolor{green!30}\num[exponent-product=\ensuremath{\cdot}]{1.713e-4} & \cellcolor{green!30}\num[exponent-product=\ensuremath{\cdot}]{1.7131e-4} & \cellcolor{green!30}\num[exponent-product=\ensuremath{\cdot}]{1.713e-4} & \cellcolor{green!30}\num[exponent-product=\ensuremath{\cdot}]{1.713e-4} & \cellcolor{green!30}\num[exponent-product=\ensuremath{\cdot}]{1.713e-4} & \cellcolor{green!30}\num[exponent-product=\ensuremath{\cdot}]{1.713e-4} \T\B \\
		& $||e^{T}||_{L^2}$  & \cellcolor{green!30}\num[exponent-product=\ensuremath{\cdot}]{6.261e-07} & \num[exponent-product=\ensuremath{\cdot}]{3.431e-4} & \num[exponent-product=\ensuremath{\cdot}]{2.379e-05} & \cellcolor{green!30}\num[exponent-product=\ensuremath{\cdot}]{1.465e-4} & \cellcolor{green!30}\num[exponent-product=\ensuremath{\cdot}]{1.830e-4} & \cellcolor{green!30}\num[exponent-product=\ensuremath{\cdot}]{1.837e-4} \T\B \\
		& $||e^{T}||_{dG}$  & \cellcolor{green!30}\num[exponent-product=\ensuremath{\cdot}]{1.773e-4} & \num[exponent-product=\ensuremath{\cdot}]{4.681e-4} & \num[exponent-product=\ensuremath{\cdot}]{1.436e-05} & \cellcolor{green!30}\num[exponent-product=\ensuremath{\cdot}]{9.604e-06} & \cellcolor{green!30}\num[exponent-product=\ensuremath{\cdot}]{1.952e-06} & \cellcolor{green!30}\num[exponent-product=\ensuremath{\cdot}]{2.055e-07} \T\B \\
	\end{tabular}
	\caption{Robustness test vs $\boldsymbol{\Theta} = \theta \mathbf{I}$ Section 5.2.1: $L^2$- and $dG$-errors of $\mathbf{u}_h$, $p_h$, and $T_h$ versus $\theta$. The results for the four different choices of the linearized form are reported. The polynomial degree of approximation and the number of elements are taken as $\ell = 4$ and $N=100$. In green we highlight the lowest error (for each field and each norm) between the four linearization schemes.}
	\label{tab:RobQSTPE_RobThetaErrors_p4}
\end{table}

\begin{table}
	\centering 
	\footnotesize
	\begin{tabular}{c | l | c | c | c | c | c }
		\textbf{Lin.} & \textbf{Error} & $N=100$ & $N=310$ & $N=1000$ & $N=3100$ & $N=10000$ \T\B \\
		\hline
		\multirow{6}{*}{{$\mathcal{C}_h^{\text{old}}$}}
		& $||\mathbf{e}^{u}||_{L^2}$  & \num[exponent-product=\ensuremath{\cdot}]{3.158e-4} & \num[exponent-product=\ensuremath{\cdot}]{5.299e-05} & \num[exponent-product=\ensuremath{\cdot}]{8.493e-06} & \num[exponent-product=\ensuremath{\cdot}]{1.334e-06} & \num[exponent-product=\ensuremath{\cdot}]{2.300e-07} \T\B \\
		& $||\mathbf{e}^{u}||_{dG}$  & 0.068 & 0.020 & 0.006 & 0.002 & \num[exponent-product=\ensuremath{\cdot}]{6.399e-4} \T\B \\
		& $||e^{p}||_{L^2}$  & 0.184 & 0.056 & 0.021 & 0.008 & 0.003 \T\B \\
		& $||e^{p}||_{dG}$  & \num[exponent-product=\ensuremath{\cdot}]{1.456e-4} & \num[exponent-product=\ensuremath{\cdot}]{7.364e-05} & \num[exponent-product=\ensuremath{\cdot}]{4.418e-05} & \num[exponent-product=\ensuremath{\cdot}]{2.795e-05} & \num[exponent-product=\ensuremath{\cdot}]{1.516e-05} \T\B \\
		& $||e^{T}||_{L^2}$  & \num[exponent-product=\ensuremath{\cdot}]{3.928e-4} & \num[exponent-product=\ensuremath{\cdot}]{4.800e-05} & \num[exponent-product=\ensuremath{\cdot}]{7.368e-06} & \num[exponent-product=\ensuremath{\cdot}]{1.182e-06} & \num[exponent-product=\ensuremath{\cdot}]{1.913e-07} \T\B \\
		& $||e^{T}||_{dG}$  & 0.037 & 0.012 & 0.003 & 0.001 & \num[exponent-product=\ensuremath{\cdot}]{3.380e-4} \T\B \\
		\hline
		\multirow{6}{*}{{$\mathcal{C}_h^{\text{vol}}$}}
		& $||\mathbf{e}^{u}||_{L^2}$  & \num[exponent-product=\ensuremath{\cdot}]{3.158e-4} & \num[exponent-product=\ensuremath{\cdot}]{5.299e-05} & \num[exponent-product=\ensuremath{\cdot}]{8.493e-06} & \num[exponent-product=\ensuremath{\cdot}]{1.334e-06} & \num[exponent-product=\ensuremath{\cdot}]{2.300e-07} \T\B \\
		& $||\mathbf{e}^{u}||_{dG}$  & 0.068 & 0.020 & 0.006 & 0.002 & \num[exponent-product=\ensuremath{\cdot}]{6.399e-4} \T\B \\
		& $||e^{p}||_{L^2}$  & 0.184 & 0.056 & 0.021 & 0.008 & 0.003 \T\B \\
		& $||e^{p}||_{dG}$  & \num[exponent-product=\ensuremath{\cdot}]{1.456e-4} & \num[exponent-product=\ensuremath{\cdot}]{7.364e-05} & \num[exponent-product=\ensuremath{\cdot}]{4.418e-05} & \num[exponent-product=\ensuremath{\cdot}]{2.795e-05} & \num[exponent-product=\ensuremath{\cdot}]{1.516e-05} \T\B \\
		& $||e^{T}||_{L^2}$  & \num[exponent-product=\ensuremath{\cdot}]{3.928e-4} & \num[exponent-product=\ensuremath{\cdot}]{4.800e-05} & \num[exponent-product=\ensuremath{\cdot}]{7.368e-06} & \num[exponent-product=\ensuremath{\cdot}]{1.182e-06} & \num[exponent-product=\ensuremath{\cdot}]{1.913e-07} \T\B \\
		& $||e^{T}||_{dG}$  & 0.037 & 0.012 & 0.003 & 0.001 & \num[exponent-product=\ensuremath{\cdot}]{3.380e-4} \T\B \\
		\hline
		\multirow{6}{*}{{$\mathcal{C}_h$}}
		& $||\mathbf{e}^{u}||_{L^2}$  & \num[exponent-product=\ensuremath{\cdot}]{3.158e-4} & \num[exponent-product=\ensuremath{\cdot}]{5.299e-05} & \num[exponent-product=\ensuremath{\cdot}]{8.493e-06} & \num[exponent-product=\ensuremath{\cdot}]{1.334e-06} & \num[exponent-product=\ensuremath{\cdot}]{2.300e-07} \T\B \\
		& $||\mathbf{e}^{u}||_{dG}$  & 0.068 & 0.020 & 0.006 & 0.002 & \num[exponent-product=\ensuremath{\cdot}]{6.399e-4} \T\B \\
		& $||e^{p}||_{L^2}$  & 0.184 & 0.056 & 0.021 & 0.008 & 0.003 \T\B \\
		& $||e^{p}||_{dG}$  & \num[exponent-product=\ensuremath{\cdot}]{1.456e-4} & \num[exponent-product=\ensuremath{\cdot}]{7.364e-05} & \num[exponent-product=\ensuremath{\cdot}]{4.418e-05} & \num[exponent-product=\ensuremath{\cdot}]{2.795e-05} & \num[exponent-product=\ensuremath{\cdot}]{1.516e-05} \T\B \\
		& $||e^{T}||_{L^2}$  & \num[exponent-product=\ensuremath{\cdot}]{3.928e-4} & \num[exponent-product=\ensuremath{\cdot}]{4.800e-05} & \num[exponent-product=\ensuremath{\cdot}]{7.368e-06} & \num[exponent-product=\ensuremath{\cdot}]{1.182e-06} & \num[exponent-product=\ensuremath{\cdot}]{1.913e-07} \T\B \\
		& $||e^{T}||_{dG}$  & 0.037 & 0.012 & 0.003 & 0.001 & \num[exponent-product=\ensuremath{\cdot}]{3.380e-4} \T\B \\
		\hline
		\multirow{6}{*}{{$\mathcal{C}_h^{\text{stab}}$}}
		& $||\mathbf{e}^{u}||_{L^2}$  & \num[exponent-product=\ensuremath{\cdot}]{3.158e-4} & \num[exponent-product=\ensuremath{\cdot}]{5.299e-05} & \num[exponent-product=\ensuremath{\cdot}]{8.493e-06} & \num[exponent-product=\ensuremath{\cdot}]{1.334e-06} & \num[exponent-product=\ensuremath{\cdot}]{2.300e-07} \T\B \\
		& $||\mathbf{e}^{u}||_{dG}$  & 0.068 & 0.020 & 0.006 & 0.002 & \num[exponent-product=\ensuremath{\cdot}]{6.399e-4} \T\B \\
		& $||e^{p}||_{L^2}$  & 0.184 & 0.056 & 0.021 & 0.008 & 0.003 \T\B \\
		& $||e^{p}||_{dG}$  & \num[exponent-product=\ensuremath{\cdot}]{1.456e-4} & \num[exponent-product=\ensuremath{\cdot}]{7.364e-05} & \num[exponent-product=\ensuremath{\cdot}]{4.418e-05} & \num[exponent-product=\ensuremath{\cdot}]{2.795e-05} & \num[exponent-product=\ensuremath{\cdot}]{1.516e-05} \T\B \\
		& $||e^{T}||_{L^2}$  & \num[exponent-product=\ensuremath{\cdot}]{3.928e-4} & \num[exponent-product=\ensuremath{\cdot}]{4.800e-05} & \num[exponent-product=\ensuremath{\cdot}]{7.368e-06} & \num[exponent-product=\ensuremath{\cdot}]{1.182e-06} & \num[exponent-product=\ensuremath{\cdot}]{1.913e-07} \T\B \\
		& $||e^{T}||_{dG}$  & 0.037 & 0.012 & 0.003 & 0.001 & \num[exponent-product=\ensuremath{\cdot}]{3.380e-4} \T\B \\
	\end{tabular}
	\caption{Robustness test vs $\mathbf{K} = k \mathbf{I}$ Section 5.2.2: $L^2$- and $dG$-errors of $\mathbf{u}_h$, $p_h$, and $T_h$ versus $N$. The results for the four different choices of the linearized form are reported. The polynomial degree of approximation and the number of elements are taken as $\ell = 2$ and $N=1000$.}
	\label{tab:RobQSTPE_RobKappaErrors_p2}
\end{table}

\begin{table}
	\centering 
	\footnotesize
	\begin{tabular}{c | l | c | c | c | c | c }
		\textbf{Lin.} & \textbf{Error} & $N=100$ & $N=310$ & $N=1000$ & $N=3100$ & $N=10000$ \T\B \\
		\hline
		\multirow{6}{*}{{$\mathcal{C}_h^{\text{old}}$}}
		& $||\mathbf{e}^{u}||_{L^2}$  & \num[exponent-product=\ensuremath{\cdot}]{3.158e-4} & \num[exponent-product=\ensuremath{\cdot}]{5.299e-05} & \num[exponent-product=\ensuremath{\cdot}]{8.494e-06} & \num[exponent-product=\ensuremath{\cdot}]{1.334e-06} & \num[exponent-product=\ensuremath{\cdot}]{2.300e-07} \T\B \\
		& $||\mathbf{e}^{u}||_{dG}$  & 0.068 & 0.020 & 0.006 & 0.002 & \num[exponent-product=\ensuremath{\cdot}]{6.399e-4} \T\B \\
		& $||e^{p}||_{L^2}$  & 0.100 & 0.031 & 0.012 & 0.004 & 0.001 \T\B \\
		& $||e^{p}||_{dG}$  & \num[exponent-product=\ensuremath{\cdot}]{8.096e-05} & \num[exponent-product=\ensuremath{\cdot}]{4.094e-05} & \num[exponent-product=\ensuremath{\cdot}]{2.456e-05} & \num[exponent-product=\ensuremath{\cdot}]{1.554e-05} & \num[exponent-product=\ensuremath{\cdot}]{8.436e-06} \T\B \\
		& $||e^{T}||_{L^2}$  & 0.100 & 0.031 & 0.012 & 0.004 & 0.001 \T\B \\
		& $||e^{T}||_{dG}$  & \num[exponent-product=\ensuremath{\cdot}]{8.095e-05} & \num[exponent-product=\ensuremath{\cdot}]{4.094e-05} & \num[exponent-product=\ensuremath{\cdot}]{2.455e-05} & \num[exponent-product=\ensuremath{\cdot}]{1.553e-05} & \num[exponent-product=\ensuremath{\cdot}]{8.425e-06} \T\B \\
		\hline
		\multirow{6}{*}{{$\mathcal{C}_h^{\text{vol}}$}}
		& $||\mathbf{e}^{u}||_{L^2}$  & \num[exponent-product=\ensuremath{\cdot}]{3.158e-4} & \num[exponent-product=\ensuremath{\cdot}]{5.299e-05} & \num[exponent-product=\ensuremath{\cdot}]{8.494e-06} & \num[exponent-product=\ensuremath{\cdot}]{1.334e-06} & \num[exponent-product=\ensuremath{\cdot}]{2.300e-07} \T\B \\
		& $||\mathbf{e}^{u}||_{dG}$  & 0.068 & 0.020 & 0.006 & 0.002 & \num[exponent-product=\ensuremath{\cdot}]{6.399e-4} \T\B \\
		& $||e^{p}||_{L^2}$  & 0.100 & 0.031 & 0.012 & 0.004 & 0.001 \T\B \\
		& $||e^{p}||_{dG}$  & \num[exponent-product=\ensuremath{\cdot}]{8.096e-05} & \num[exponent-product=\ensuremath{\cdot}]{4.094e-05} & \num[exponent-product=\ensuremath{\cdot}]{2.456e-05} & \num[exponent-product=\ensuremath{\cdot}]{1.554e-05} & \num[exponent-product=\ensuremath{\cdot}]{8.436e-06} \T\B \\
		& $||e^{T}||_{L^2}$  & 0.100 & 0.031 & 0.012 & 0.004 & 0.001 \T\B \\
		& $||e^{T}||_{dG}$  & \num[exponent-product=\ensuremath{\cdot}]{8.095e-05} & \num[exponent-product=\ensuremath{\cdot}]{4.094e-05} & \num[exponent-product=\ensuremath{\cdot}]{2.455e-05} & \num[exponent-product=\ensuremath{\cdot}]{1.553e-05} & \num[exponent-product=\ensuremath{\cdot}]{8.425e-06} \T\B \\
		\hline
		\multirow{6}{*}{{$\mathcal{C}_h$}}
		& $||\mathbf{e}^{u}||_{L^2}$  & \num[exponent-product=\ensuremath{\cdot}]{3.158e-4} & \num[exponent-product=\ensuremath{\cdot}]{5.299e-05} & \num[exponent-product=\ensuremath{\cdot}]{8.494e-06} & \num[exponent-product=\ensuremath{\cdot}]{1.334e-06} & \num[exponent-product=\ensuremath{\cdot}]{2.300e-07} \T\B \\
		& $||\mathbf{e}^{u}||_{dG}$  & 0.068 & 0.020 & 0.006 & 0.002 & \num[exponent-product=\ensuremath{\cdot}]{6.399e-4} \T\B \\
		& $||e^{p}||_{L^2}$  & 0.100 & 0.031 & 0.012 & 0.004 & 0.001 \T\B \\
		& $||e^{p}||_{dG}$  & \num[exponent-product=\ensuremath{\cdot}]{8.096e-05} & \num[exponent-product=\ensuremath{\cdot}]{4.094e-05} & \num[exponent-product=\ensuremath{\cdot}]{2.456e-05} & \num[exponent-product=\ensuremath{\cdot}]{1.554e-05} & \num[exponent-product=\ensuremath{\cdot}]{8.436e-06} \T\B \\
		& $||e^{T}||_{L^2}$  & 0.100 & 0.031 & 0.012 & 0.004 & 0.001 \T\B \\
		& $||e^{T}||_{dG}$  & \num[exponent-product=\ensuremath{\cdot}]{8.095e-05} & \num[exponent-product=\ensuremath{\cdot}]{4.094e-05} & \num[exponent-product=\ensuremath{\cdot}]{2.455e-05} & \num[exponent-product=\ensuremath{\cdot}]{1.553e-05} & \num[exponent-product=\ensuremath{\cdot}]{8.425e-06} \T\B \\
		\hline
		\multirow{6}{*}{{$\mathcal{C}_h^{\text{stab}}$}}
		& $||\mathbf{e}^{u}||_{L^2}$  & \num[exponent-product=\ensuremath{\cdot}]{3.158e-4} & \num[exponent-product=\ensuremath{\cdot}]{5.299e-05} & \num[exponent-product=\ensuremath{\cdot}]{8.494e-06} & \num[exponent-product=\ensuremath{\cdot}]{1.334e-06} & \num[exponent-product=\ensuremath{\cdot}]{2.300e-07} \T\B \\
		& $||\mathbf{e}^{u}||_{dG}$  & 0.068 & 0.020 & 0.006 & 0.002 & \num[exponent-product=\ensuremath{\cdot}]{6.399e-4} \T\B \\
		& $||e^{p}||_{L^2}$  & 0.100 & 0.031 & 0.012 & 0.004 & 0.001 \T\B \\
		& $||e^{p}||_{dG}$  & \num[exponent-product=\ensuremath{\cdot}]{8.096e-05} & \num[exponent-product=\ensuremath{\cdot}]{4.094e-05} & \num[exponent-product=\ensuremath{\cdot}]{2.456e-05} & \num[exponent-product=\ensuremath{\cdot}]{1.554e-05} & \num[exponent-product=\ensuremath{\cdot}]{8.436e-06} \T\B \\
		& $||e^{T}||_{L^2}$  & 0.100 & 0.031 & 0.012 & 0.004 & 0.001 \T\B \\
		& $||e^{T}||_{dG}$  & \num[exponent-product=\ensuremath{\cdot}]{8.095e-05} & \num[exponent-product=\ensuremath{\cdot}]{4.094e-05} & \num[exponent-product=\ensuremath{\cdot}]{2.455e-05} & \num[exponent-product=\ensuremath{\cdot}]{1.553e-05} & \num[exponent-product=\ensuremath{\cdot}]{8.425e-06} \T\B \\
	\end{tabular}
	\caption{Robustness test vs $\mathbf{K} = k \mathbf{I}, \boldsymbol{\Theta} = \theta \mathbf{I}$ Section 5.2.3: $L^2$- and $dG$-errors of $\mathbf{u}_h$, $p_h$, and $T_h$ versus $N$. The results for the four different choices of the linearized form are reported. The polynomial degree of approximation and the number of elements are taken as $\ell = 2$ and $N=1000$.}
	\label{tab:RobQSTPE_RobThetaKappaErrors_p2}
\end{table}

\end{document}